\theoremstyle{plain}
\newcommand{\blue}{\color{blue}}
\newcommand{\red}{\color{red}}
\newtheorem{thm}{Theorem}[section]
\newtheorem{prop}[thm]{Proposition}
\newtheorem{cor}[thm]{Corollary}
\newtheorem{lemma}[thm]{Lemma}
\theoremstyle{definition}
\newtheorem{defn}[thm]{Definition}
\theoremstyle{remark}
\newtheorem{rmk}[thm]{Remark}
\newtheorem*{thmn}{Theorem}
\newcommand{\Rmnum}[1]{\expandafter\@slowromancap\romannumeral #1@}
\begin{document}
\title[]
{\bf Topology of Surfaces with Finite Willmore Energy}
\author[ ]
{Jie Zhou}
\address{
\newline
Department of Mathematical Sciences, Tsinghua University, Beijing, P. R. China, 100084
{\tt Email: zhoujiemath@mail.tsinghua.edu.cn}}
\today
\maketitle
\begin{abstract}
  In this paper, we study the critical case of the Allard regularity theorem. Combining with Reifenberg's topological disk theorem, we get a critical Allard-Reifenberg type regularity theorem. As a main result, we get the topological finiteness for a class of  properly immersed surfaces in $\mathbb{R}^n$ with finite Willmore energy. Especially, we prove a removability of singularity of multiplicity one surface with finite Willmore energy and a uniqueness theorem of the catenoid under no a priori  topological finiteness assumption.
\end{abstract}
\tableofcontents
\section{Introduction}
Assume $\Sigma\subset \mathbb{R}^n$ is a properly immersed smooth surface and denote the immersion by $f:\Sigma\to \mathbb{R}^n$. Let $g=f^*g_{\mathbb{R}^n}$ be the induced metric and $H_f=\triangle_g f$ be the mean curvature. If $H_f=0$, $f$ is called a minimal immersion and  $\Sigma$ is called an immersed minimal surface in $\mathbb{R}^n$.  One of the most important property for minimal surfaces in $\mathbb{R}^n$ is the monotonicity formula, i.e., for  $x\in \mathbb{R}^n$,
$$\Theta(x,r)=\frac{\mathcal{H}^2(B_r(x)\cap \Sigma)}{\pi r^2}$$
is increasing, where $\mathcal{H}^2$ is the two dimensional Hausdorff measure in $\mathbb{R}^n$.  It implies the density
$$\Theta(\Sigma,\infty)=\lim_{r\to +\infty}\Theta(x,r)\in [1,\infty]$$
of a minimal surface at infinity is well defined. A first important fact about the density of minimal surface is the following corollary of the Allard regularity theorem\cite{A72}: if an immersed minimal surface satisfying $\Theta(\Sigma,\infty)<1+\varepsilon$ for $\varepsilon$ sufficient small, then $\Sigma$ is a plane.  For $\Theta(\Sigma,\infty)=2$, in the case $n=3$, there are two typical nontrivial examples--- the catenoid($x_1^2+x_2^2=ch^2x_3$) and Scherk's singly-periodic surface.
\begin{figure}[!htbp]
	\centering
	\begin{tabular}{cc}
		\includegraphics[width=0.40\linewidth]{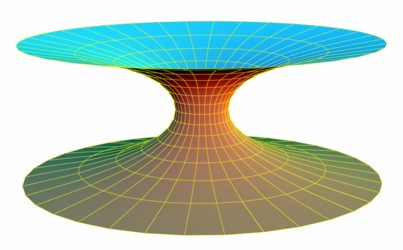}  &
        \hspace{0.3in}
		\includegraphics[width=0.20\linewidth]{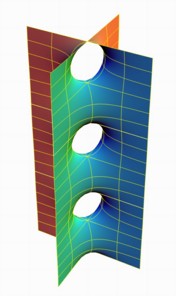}   \\
		(Catenoid) & (Scherk's singly periodic surface)\\
	\end{tabular}

All pictures of minimal surfaces in this paper are taken from \url{www.indiana.edu/~minimal}.
\end{figure}
 They are both embedded minimal surfaces. The catenoid is rotationally symmetric, is the simplest minimal surface except for the plane and can be regarded as the fundamental solution of minimal surface equation. The catenoid has finite topology and finite total curvature but Scherk's singly periodic surface has infinite topology and infinite total curvature. And it is found by Karcher\cite{K88} that there is a one parameter deformation $\Sigma_\theta, \theta\in (0,\frac{\pi}{2}]$, of Scherk's surface $\Sigma_{\frac{\pi}{2}}$. They are all embedded minimal surfaces with $\Theta(\Sigma_{\theta},\infty)=2$ and are also called Scherk's surfaces. Conversely, Meeks and Wolf proved:
\begin{thmn}[\bf{Meeks-Wolf,\cite{MW07}}] A connected properly immersed minimal surface in $\mathbb{R}^3$ with infinite symmetry group and $\Theta(\Sigma,\infty)<3$ is a plane, a catenoid or a Scherk singly-periodic minimal surface $\Sigma_\theta,\theta\in (0,\frac{\pi}{2}]$.
\end{thmn}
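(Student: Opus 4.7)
The plan is to exploit the rigidity coming from the density bound $\Theta(\Sigma,\infty)<3$ together with the infinite symmetry, by first reducing to a manageable list of symmetry types and then classifying each. Let $G\subset O(3)\ltimes\mathbb{R}^3$ be the subgroup of isometries of $\mathbb{R}^3$ preserving $\Sigma$; properness of the immersion guarantees $G$ is closed. By the infiniteness hypothesis, either the identity component $G^0$ has positive dimension, or $G$ is infinite discrete. Since the projection $G\to O(3)$ has image in a compact group, an infinite discrete $G$ must contain a nontrivial translation. Consequently $\Sigma$ is either invariant under a one-parameter subgroup of rotations, translations, or screw motions, or else singly, doubly, or triply periodic.

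In the continuous-symmetry case: rotations force $\Sigma$ to be a surface of revolution whose meridian satisfies the catenoid ODE, so $\Sigma$ is a plane or a catenoid; translations make $\Sigma$ a cylinder over a complete planar minimal curve, which is a straight line, so $\Sigma$ is a plane; screw motions give generalized helicoids whose area in $B_r$ grows like $r^3$, contradicting $\Theta(\Sigma,\infty)<3$. In the discrete case, triply periodic surfaces likewise have cubic area growth and infinite density, so they are excluded; a doubly periodic surface, via a counting of ends per fundamental cell combined with the monotonicity formula, has density at least $4$ unless it is a plane, so the density gap eliminates this case as well. This leaves only singly periodic $\Sigma$, for which I would pass to the quotient $\widetilde\Sigma\subset\mathbb{R}^3/\mathbb{Z}v$ and use the flux formula together with $2\leq\Theta(\Sigma,\infty)<3$ to conclude that $\widetilde\Sigma$ has exactly two ends, each asymptotic to a half-plane (i.e.\ of Scherk type rather than catenoidal, since a catenoidal end in the quotient would force a finite-total-curvature model already ruled out by the continuous-symmetry step).

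The main obstacle, and the technical heart of the Meeks--Wolf argument, is identifying such a singly periodic minimal surface with the Karcher family $\Sigma_\theta$. This is a moduli-space statement and cannot be reduced to a local computation: one must use the Weierstrass representation of the quotient, analyze the period and flux conditions that parameterize Scherk-type ends in $\mathbb{R}^3/\mathbb{Z}v$, and show that these conditions admit a unique one-parameter family of solutions, matched up with the angle $\theta\in(0,\pi/2]$ between the two asymptotic half-planes. One must also upgrade \emph{immersed} to \emph{embedded} along the way, using the density gap $\Theta<3$ to prevent crossings of the two asymptotic sheets. By contrast, the preliminary symmetry reduction and the elimination of the multiply-periodic and helicoidal cases rely on the monotonicity formula and standard facts about proper actions of closed subgroups of the Euclidean group, and should be comparatively routine.
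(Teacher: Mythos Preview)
The paper does not prove this statement. The Meeks--Wolf theorem is quoted in the introduction as a known result from \cite{MW07}, with no proof supplied; it serves only as motivation and context for the paper's own Corollary~\ref{catenoid}, which is the much weaker assertion that a connected properly immersed minimal surface in $\mathbb{R}^3$ with at least two ends and $\Theta(\Sigma,\infty)<3$ is a catenoid. That corollary is proved in two lines by invoking Theorem~\ref{finite topology} (finite topology and embedded ends under the density gap) and then Schoen's uniqueness theorem \cite{S83}. So there is no ``paper's own proof'' of the Meeks--Wolf theorem to compare your proposal against.

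As for your sketch itself: the symmetry reduction (closed subgroup of the Euclidean group, continuous one-parameter subgroups versus discrete lattice of translations) and the elimination of helicoidal, doubly periodic, and triply periodic cases via area-growth bounds are broadly in the right spirit, and you correctly identify that the entire weight of the theorem lies in the singly periodic case. But you have not actually supplied an argument there---you describe what must be done (Weierstrass data, period conditions, uniqueness of the one-parameter family) without doing it, and this step is genuinely hard: the Meeks--Wolf proof uses Teichm\"uller-theoretic methods to analyze the moduli space of Scherk-type ends and is not a routine flux-and-monotonicity computation. Your proposal is therefore an outline of a strategy rather than a proof, and the one step that carries all the content is left open. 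A couple of the preliminary eliminations also need more care than you give them (for instance, the density lower bound for doubly periodic surfaces is not simply ``at least $4$'' without further argument about end types in the quotient).
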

They conjecture the infinite symmetry condition can be removed(see also Conjecture 10 in\cite{M03}). For $3\le\Theta(\Sigma,\infty)<\infty$, there are not so clear classification, and Meeks and Wolf also conjecture such minimal surfaces admit unique tangent cone at infinity\cite[Conjecture 1]{MW07}.

Besides the above uniqueness result of Meeks and Wolf, there are many classical classification theorems for minimal surfaces\cite{O64}\cite{O69}\cite{LR91}\cite{L92}\cite{C84}\cite{C89}\cite{C91}\cite{S83}\cite{HM90}. Their common requirement is the minimal surface has finite total curvature, i.e., $$\int_{\Sigma}|A|^2d\mathcal{H}^2<\infty,$$ where $A$ is the second fundamental form of the surface. Especially, by moving plane method, Schoen\cite{S83} proved the only connected complete immersed minimal surface in $\mathbb{R}^3$ with finite total curvature and two embedding ends is the catenoid.  There is a purely topological description for embedded minimal surface with finite total curvature. A surface is said to have finite topology if it is homeomorphic to a closed surface with finite many points removed. And the number of ends of a properly immersed minimal surface is defined by the number of the noncompact connected components of the surface at infinity, i.e.,
   $$e(\Sigma,\infty)=\lim_{r\to \infty}\tilde{\beta}_0(\Sigma\cap(\mathbb{R}^n\backslash B_r(0)))\in [1,\infty],$$
   where by $\tilde{\beta}_0$ we mean the number of noncompact connected components of a topology space. Each such noncompact connected component at infinity is called an end of $\Sigma$. On the one hand, by Huber's result\cite{H57}, any surface in $\mathbb{R}^n$ with finite total curvature must has finite topology. On the other hand, with Meeks and Rosenberg's\cite{MR93} classification of the complex structure of properly embedded minimal surface with at least two ends, Collin \cite{C97} proved a properly embedded minimal surface in $\mathbb{R}^3$ with at least two ends has finite total curvature if and only if it has finite topology. In both \cite{MR93} and \cite{C97}, the assumption $e(\Sigma,\infty)\ge 2$ is necessary to rule out the helicoid type ends. To distinguish the number of ends is also helpful for understanding Meek's conjecture. The catenoid has two ends. But the ``two" tangent planes of Scherk's singly-periodic surface joint together, which forces the surface to possess only one end. So a corollary of Meek's conjecture is that the only connected properly immersed minimal surface in $\mathbb{R}^3$ with $\Theta(\Sigma,\infty)<3$ and at least two ends is the catenoid. By the results of Schoen and Collin recalled above, the only gap to the corollary is the topological finiteness of the surface. And the topological finiteness is the main question we care about in this paper:

   \bigskip

   {\blue For a surface immersed in $\mathbb{R}^n$ with finite Willmore energy $\int_{\Sigma}|H|^2d\mathcal{H}^2<\infty$ (or simply, $H=0$), when does it have finite topology?}

   \bigskip

   The counterexample of Scherk's singly periodic minimal surface gives some geometric intuition: The number of ends should not be too less with respect to the density. Otherwise, ``different" tangent planes at infinity will twist together to shape infinite many genuses.  And our answer is:
   \begin{thm}[$\mathbf{Finite\ Topology}$]\label{main}Assume $\Sigma\subset \mathbb{R}^{2+k}$ is a properly immersed  open surface with finite Willmore energy, i.e.,
    $$\int_{\Sigma}|H|^2d\mathcal{H}^2<\infty.$$
     If its number of ends is not less than the lower density at infinity, more precisely,
   $$e(\Sigma,\infty)>\Theta_*(\Sigma,\infty)
   -1<+\infty,$$
   then $\Sigma$ has finite topology and finite total curvature and $\Theta(\Sigma,\infty)=e(\Sigma,\infty)$ is an integer number.
   \end{thm}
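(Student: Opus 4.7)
The plan is to analyze $\Sigma$ at infinity by blow-down, use Simon's monotonicity formula to show $\Theta(\Sigma,\infty)$ is a positive integer, exploit the counting hypothesis $e>\Theta_{*}-1$ to identify that integer as $e(\Sigma,\infty)$ and force every end to have multiplicity one, and then apply the critical Allard--Reifenberg regularity theorem proved earlier in the paper to promote each multiplicity-one end to a $C^{1,\alpha}$-graph over a plane. Finite topology is then immediate, and finite total curvature follows from Gauss--Bonnet.

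First I establish integrality. Simon's monotonicity formula for $\int|H|^{2}<\infty$ gives that $\Theta(0,r)$ is essentially monotone in $r$ up to a factor $\exp\!\bigl(C\int_{B_{r}}|H|^{2}\bigr)$, so the limit $\Theta(\Sigma,\infty)$ exists and equals the hypothesized finite quantity $\Theta_{*}(\Sigma,\infty)$. Consider a blow-down $\Sigma_{j}=\lambda_{j}\Sigma$ with $\lambda_{j}\to0$. The associated varifolds have locally bounded mass, while by scale-invariance of the Willmore integrand,
\begin{equation*}
\int_{\Sigma_{j}\cap(B_{R}\setminus B_{\rho})}|H_{\Sigma_{j}}|^{2}\,d\mathcal{H}^{2}=\int_{\Sigma\cap(B_{R/\lambda_{j}}\setminus B_{\rho/\lambda_{j}})}|H|^{2}\,d\mathcal{H}^{2}\longrightarrow 0.
\end{equation*}
A subsequential limit $V_{\infty}$ is therefore stationary with $H\equiv 0$ and constant density, hence a $2$-cone; any stationary integer $2$-cone in $\mathbb{R}^{2+k}$ is a sum of $2$-planes through the origin with positive integer multiplicities (each radial slice is a stationary $1$-varifold on $S^{1+k}$, necessarily a union of great circles). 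Hence $\Theta(\Sigma,\infty)\in\mathbb{Z}_{+}$. Applying the same argument end by end, $\Theta(E_{j},\infty)\in\mathbb{Z}_{+}$ for every end $E_{j}$, and since the compact core contributes nothing in the limit,
\begin{equation*}
e(\Sigma,\infty)\le\sum_{j}\Theta(E_{j},\infty)=\Theta(\Sigma,\infty)=\Theta_{*}(\Sigma,\infty)<e(\Sigma,\infty)+1.
\end{equation*}
The middle number is an integer in $[e,e+1)$, so $\Theta(\Sigma,\infty)=e(\Sigma,\infty)$ and $\Theta(E_{j},\infty)=1$ for every end.

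Next I apply the critical Allard--Reifenberg theorem of the paper to each multiplicity-one end. Fix $E_{j}$. Because $\int_{\Sigma\setminus B_{R}}|H|^{2}\to0$ and $\Theta(E_{j},\infty)=1$, for every $\varepsilon>0$ there is an $R_{\varepsilon}$ such that $\Theta(p,c|p|)<1+\varepsilon$ and $\int_{\Sigma\cap B_{c|p|}(p)}|H|^{2}<\varepsilon$ at every $p\in E_{j}$ with $|p|>R_{\varepsilon}$, for a fixed small $c>0$. The regularity theorem then concludes that $E_{j}\setminus B_{R_{\varepsilon}}$ is a $C^{1,\alpha}$-graph of small norm of a function $u_{j}\colon P_{j}\setminus D_{j}\to P_{j}^{\perp}$ over its asymptotic plane $P_{j}$, for some compact disk $D_{j}\subset P_{j}$, with $|\nabla u_{j}|$ decaying at infinity. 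Each end is thus homeomorphic to an annulus, and since there are only $e(\Sigma,\infty)<\infty$ ends, $\Sigma$ is homeomorphic to a closed surface with $e(\Sigma,\infty)$ points removed: finite topology.

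Finite total curvature then follows from Gauss--Bonnet on the exhaustion $\Sigma\cap B_{R}$: the graphical description of each end forces the boundary geodesic-curvature integrals to converge to $2\pi e(\Sigma,\infty)$, so $\int_{\Sigma}K\,d\mathcal{H}^{2}$ is finite; combined with $\int_{\Sigma}|H|^{2}<\infty$ and the Gauss equation (which relates $|A|^{2}$, $|H|^{2}$, and $K$ in dimension two), this gives $\int_{\Sigma}|A|^{2}<\infty$. The hard step is the middle one. Classical Allard regularity requires a strictly subcritical integrability of $H$ (e.g.\ $H\in L^{p}$ with $p>\dim\Sigma=2$), which is precisely unavailable under the scale-invariant assumption $H\in L^{2}$; the critical Allard--Reifenberg theorem of the paper is tailored to this borderline case, combining a critical $\varepsilon$-regularity estimate for the Willmore integrand with Reifenberg's topological disk theorem to extract flatness at all scales from the energy decay $\int_{\Sigma\setminus B_{R}}|H|^{2}\to 0$ rather than from pointwise control of $H$. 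The blow-down, end-counting, and Gauss--Bonnet steps are then standard.
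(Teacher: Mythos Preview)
Your overall architecture---blow down to see $\Theta(\Sigma,\infty)\in\mathbb{Z}_{+}$, use the counting inequality to force $\Theta(E_{j},\infty)=1$ on every end, then apply the critical regularity theorem---matches the paper's, and your integrality argument is essentially the one the paper cites from \cite{KLS} in Remark~\ref{end density}. The gap is in the middle step. Theorem~\ref{Holder Regularity} does \emph{not} produce a $C^{1,\alpha}$ graph: it yields only a $C^{\alpha}$ homeomorphism of $spt\mu_{V}\cap B_{\sigma}$ onto a disk (the paper explicitly leaves open whether even Lipschitz regularity holds in this critical setting). So your assertion that ``$E_{j}\setminus B_{R_{\varepsilon}}$ is a $C^{1,\alpha}$-graph over its asymptotic plane $P_{j}$'' is not a consequence of the theorem you invoke. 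Moreover, applying the regularity theorem on balls $B_{c|p|}(p)$ for each far-out $p$ gives local $C^{\alpha}$ disks whose approximating planes $T(p,\sigma)$ depend on $p$ and $\sigma$; you have not explained how to assemble these into a single embedded annulus, let alone a graph over one fixed plane. The paper solves this by \emph{inverting} the end: for $\Sigma_{1}$ with $\Theta(\Sigma_{1},\infty)=1$, the inverted surface $\tilde\Sigma_{1}$ is a varifold in $\mathbb{R}^{2+k}$ with $\tilde H\in L^{2}$ and, by the density identity $\Theta(\tilde\Sigma_{1},0)=\Theta(\Sigma_{1},\infty)=1$ (Lemma~\ref{density identity}), one applies Theorem~\ref{Holder Regularity} once at the origin to see $\tilde\Sigma_{1}\cap B_{\sigma}(0)$ is a topological disk; inverting back, $\Sigma_{1}\setminus B_{R}$ is an embedded annulus. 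This single-shot application at the inversion center is exactly what replaces the missing gluing argument.

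This gap propagates to your total-curvature step. Your Gauss--Bonnet argument needs the geodesic-curvature integrals on $\partial(\Sigma\cap B_{R})$ to converge to $2\pi e(\Sigma,\infty)$, which requires $C^{1}$ control of the ends with gradient decay; a $C^{\alpha}$ homeomorphism alone does not give this. The paper instead uses Ilmanen's local Gauss--Bonnet estimate \cite{I95}, which bounds $\int_{\Sigma\cap B_{r}}|A|^{2}$ by the Willmore energy, the genus of $\Sigma\cap B_{s}$, and the density ratio---quantities already controlled once the ends are known to be topological annuli. After that, the conformal-type conclusion follows from Huber \cite{H57}.
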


    By some geometric measure theory argument\cite{KLS}(see Remark \ref{end density}), the assumption $e(\Sigma,\infty)>\Theta_*(\Sigma,\infty)-1$ in fact implies $\Sigma$ has exact $e=e(\Sigma,\infty)=\Theta(\Sigma,\infty)$ many ends and each of them has density one at infinity. By the compactness theorem for integral varifolds\cite{A72}, these ends blow down to planes with multiplicity one. Thus by Leon Simon's theorem on the uniqueness of tangent cone with smooth cross section\cite{LS83b}\cite[page 269, The paragraph after Theorem 5.7]{LS85}, in the case of
    $$H=0,$$
      each end of $\Sigma$ is a graph over a tangent plane, hence already has finite topology.

 In\cite{LS83b} and \cite{LS85}, by using the variation structure and PDE techniques, especially the monotonicity formula and the $3$-circle theorem,  Leon Simon established a decay estimate around the isolated singularities of solutions for very general variation equations and got the uniqueness of the tangent cone at isolated singular points. Leon Simon also showed\cite{LS85} the same method works for the tangent cone at infinity. This method is very powerful in analysing the asymptotic behavior of Geometric PDE. For decades, the general method has been applied to many geometric objects including minimal surfaces, harmonic maps, Einstein metrics and corresponding geometric flows. These conclusions imply much more analytic information than the topological finiteness. And we are trying to understand if only caring about the topology, can we get a soft result under looser condition without equation. Theorem \ref{main} is the answer. Below we still take the case of $H=0$ to explain our key observation. It will not loss generality.

   The idea comes out when we are watching minimal surfaces by  the inversion.  By combining the monotonicity formulae of a minimal surface $\Sigma$ and its inverted surface $\tilde{\Sigma}$ and a key conformal antisymmetrical invariance we observe(see (\ref{localinvariant})), we get the following density identity:
    \begin{align*}
   \Theta(\tilde{\Sigma},p)
   =\frac{1}{16\pi}\int_{\tilde{\Sigma}\backslash\{p\}}|\tilde{H}|^2d\mu_{\tilde{g}}
   =\Theta(\Sigma,\infty), \ \ p\notin \Sigma,
  \end{align*}
  which means the single quantity $\Theta(\Sigma,\infty)$ can control both the Willmore energy $\int_{\tilde{\Sigma}\backslash\{p\}}|\tilde{H}|^2d\mu_{\tilde{g}}$ and the local density $\Theta(\tilde{\Sigma},p)$ of $\tilde{\Sigma}$ at the inverting base point $p$.
  \begin{figure}[H]
	\centering
	\begin{tabular}{cc}
		\includegraphics[width=0.25\linewidth]{catenoid2.jpg}  &
        \hspace{0.05in}
		
		\includegraphics[width=0.25\linewidth]{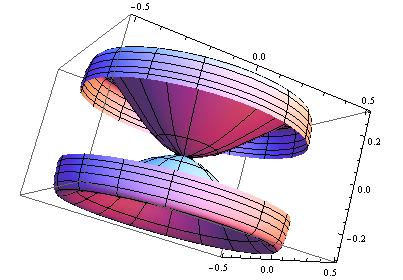}\\
		(Catenoid) &(Part of inverted catenoid)\\
	\end{tabular}
   \end{figure}
   This implies if we invert only one end with density one, then we will get a varifold with density one at the inverting point and bounded Willmore energy, which is on the border of the classical Allard regularity theorem. Recall the Allard regularity theorem\cite{A72} says if an integral $n-$varifold $V=\underline{v}(M,\theta)$ in $B_r(0)\subset\mathbb{R}^{n+k}$ satisfies
  \begin{align*}
  \Theta^n(V,0)<1+\varepsilon, \ \ \ \ \ \ \ \ \ (r^{p-n}\int_{B_r(0)}|H|^p)^{1/p}<\varepsilon
  \end{align*}
  for some $p>n$, $\varepsilon$ small and $0\in sptV$, then the varifold is a $C^{1,\alpha=1-\frac{n}{p}}$ graph in a small neighborhood of $0$. For a smooth immersion $f:M^n\to \mathbb{R}^{n+k}$, $H=\triangle_g f$.  Comparing a varifold to a function, then the generalized mean curvature should be regarded as the weak ``Laplacian".  In this viewpoint, Allard regularity theorem could be regarded as a geometric  nonlinear disturbed version of the $W^{2,p}$ estimates for solutions of linear elliptic equations, combining with the Sobolev embedding theorem
  $W^{2,p}\hookrightarrow C^{1,1-\frac{n}{p}}$.
  \begin{table}[H]
\begin{tabular}{|c|c|c|c|c|}
\hline
Geometry & smooth manifold&Varifold& weak $H$ &  {\blue Allard Regularity}\\
\hline
Analysis&smooth function& Sobolev function & $\triangle_{dist} f$ & $W^{2,p}$ Esitimate\\
\hline
\end{tabular}
\end{table}

   But the mean curvature equation is nonlinear, when getting regularity, one need to do linear approximation first and then use a supercritical index (here $p>n$) to get an iteration program and then a Campanato type regularity estimate. And now it is in the critical case $p=n=2$. The best expected result is a regularity of type ($W^{2,2}\hookrightarrow$) $C^{\alpha}$. By the experience of graphical estimate(See \cite[Lemma 2.11]{CM04} or \cite[Lemma 2.4]{CM11}), the graphical result is always corresponding to a Lipschitz estimate, which seems impossible in our case. So we may not get a $C^{\alpha}$ graph but only get a $C^{\alpha}$ parametrization, which is also enough to show the end is embedded and has finite topology. There is another positive evidence. In \cite{SZ}, Sun and the author proved a properly immersed smooth surface in the unit ball with finite area and small total curvature admits $C^{\alpha}$ parametrization with uniform estimate in some uniform small scale, which can be regarded as a geometric disturbed version of Sobolev's embedding $W^{2,2}\hookrightarrow C^{\alpha}$. This indicates the $C^{\alpha}$ parametrization is hopeful and encourages us to check the original proof of Allard regularity theorem in the critical case. It turns out there is no difficulty in getting the Lipschitz approximation\cite[Section 20]{LS83} from Leon Simon's monotonicity formula\cite{LS93}, but it is impossible to run the iteration program to get a decay of the tilt-excess\cite[Section 22]{LS83}. 

    Fortunately, there is a well developed criteria for the $C^{\alpha}$ regularity of a closed set in $\mathbb{R}^n$. That is Reifenberg's topological disk theorem\cite{R60}\cite{M66}\cite{LS96}, whose proof contains a geometric iteration program.
    Reifenberg's theorem has been established in 1960. Recent twenty years, many mathematicians used the method to research the regularity of both Ricci limit spaces and Radon measures. Let us refer \cite[Appendix]{CC97} \cite{DKT01}\cite{P98}\cite[section 7]{DP02}\cite{DP07} for readers who are interested in related topics. Especially,  Paolini proved\cite{P98} the $C^{\alpha}$ regularity for minimal boundaries in $\mathbb{R}^n$ with mean curvature in $L^n$. Similarly, in our critical case, when combining with the Lipschitz approximation, we can check Reifenberg's condition. As a result, we get the $C^{\alpha}$--regularity for rectifiable $2$-varifold with square integrable generalized mean curvature at those points with density close to one. See Theorem \ref{Holder Regularity} for precise statement.


As an application of Theorem \ref{main}, we studied isolated singularities for properly immersed surfaces with finite Willmore energy. We get the removability of such singularities under the assumption of density less than two.  We do not assume the surface have finite topology or finite total curvature a priori. See Corollary \ref{removability} for details.  As corollaries of Theorem \ref{main}, we also give a simple proof of the uniqueness of the catenoid(see Corollary \ref{catenoid}) and analysis the structure of minimal ends in $\mathbb{R}^{2+k}$ with multiplicity less than two.


 This paper is organized as following. In section \ref{lipschitz approximation}, we prove the Lipschitz approximation theorem. In section \ref{reifenberg}, we check the Reifenberg condition and complete the proof of the $C^\alpha$ regularity. In section \ref{application}, we deduce the density identity of inverting minimal surface and apply the $C^{\alpha}$ regularity theorem to ends with density less than two to get the main theorem of this paper. In section \ref{realapplication}, we give the two applications.  
\section{Lipschitz Approximation}\label{lipschitz approximation}
In this section, we check out the Lipschitz approximation theorem in the critical case. It is the first step of proving the Allard regularity theorem and many of the ideas are similar to those of \cite{LS83}(see also \cite[section 5.2, 5.4, 5.5, 5.6]{LS14}), except for a careful analysis involving the remainder term of Leon Simon's monotonicity identity (\ref{monotonicity equality}) in the proof Lemma \ref{band} and some other details. We also focus on the semi-Reifenberg condition (\ref{semireifenberg}), which is essential for the proof of the $C^\alpha$ regularity theorem in section \ref{reifenberg}.

For a rectifiable $2$-varifold $V=\underline{v}(\Sigma,\theta)$ in an open set $U\subset \mathbb{R}^n$, we always denote the corresponding Radon measure by $\mu=\mu_V=\mathcal{H}^2\llcorner \theta$, i.e, for any Borel set $A\subset \mathbb{R}^n$,
$$\mu(A)=\mu_V(A)=\int_{A\cap \Sigma}\theta d\mathcal{H}^2.$$

The following is the main result of this section---the Lipschitz approximation theorem.
\begin{thm}[\textbf{Lipschitz Approximation for 2-varifold}]\label{Lipschitz Approximation for 2-varifold}
Assume $V=\underline{v}(\Sigma,\theta)$ is a rectifiable 2-varifold in $U\supset B_{\rho}(0)\subset \mathbb{R}^{2+k}$ with generalized mean curvature $H\in L^2(d\mu_V)$,  $0\in sptV$ and $\theta\ge 1$ for $\mu_V-a.e. x\in U$. Then there exists small $\delta_6(=\frac{1}{2^{1688}k^{40}})$ such that for any $\delta\le \delta_6$ if
$$\frac{\mu_V(B_{\rho}(0))}{\pi \rho^2}\le1+\delta\text{\  and \ }\int_{B_{\rho}(0)}|H|^2\le \delta,$$
then for any $\xi\in B_{\frac{1}{2}\delta^{\frac{1}{2}}\rho}(0)$ and  $\sigma\in (0,\frac{1}{2^{16}}\delta^{\frac{1}{2}}\rho)$, there exist $T=T(\xi, \sigma)\in G_{2+k,2}(\mathbb{R})$ passing through $\xi$ and a Lipschitz function $$f=(f^1,f^2,...f^k):B_{\sigma}(\xi)\cap T\to \mathbb{R}^k:=T^{\bot}$$
 with
\begin{align*}
   &i) \ \ Lipf\le\delta^{\frac{1}{40}},\\
   &ii) \ \ \operatorname*{sup}\limits_{x\in B_{\sigma}(\xi)}|f(x)|\le\delta^{\frac{1}{40}}\sigma,\\
   &iii)\ \ \operatorname*{sup}\limits_{x\in B_{\sigma}(\xi)\cap spt\mu_V}|q(x)|\le\delta^{\frac{1}{40}}\sigma,\\
   &iv) \ \ \mathcal{H}^2((graphf\backslash sptV)\cap B_{\sigma}(\xi))+\mu_V(B_{\sigma}(\xi)\backslash graphf)\le 2^{83}\delta^{\frac{1}{16}}\pi \sigma^2,
\end{align*}
where $q:\mathbb{R}^{2+k}\to T^\bot$ is the orthogonal projection.
\end{thm}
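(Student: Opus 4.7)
The plan is to adapt the Lipschitz approximation step of Leon Simon's treatment of Allard's regularity theorem \cite[Sec.~20]{LS83} to the critical case $p=n=2$. The subcritical $L^p$ control of $H$ that drives the usual proof is replaced by the $L^2$ control through Leon Simon's monotonicity identity \cite{LS93}, which in dimension two has the form
\begin{equation*}
\frac{\mu_V(B_\sigma(x))}{\pi\sigma^2}+\int_{B_\sigma(x)}\frac{|(y-x)^\perp|^2}{|y-x|^4}\,d\mu_V=\frac{\mu_V(B_\rho(x))}{\pi\rho^2}+R(x,\sigma,\rho),
\end{equation*}
where $R$ depends linearly on $H$ and is controlled by Cauchy--Schwarz against $\bigl(\int|H|^2\bigr)^{1/2}\le\delta^{1/2}$. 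The quadratic tilt integral on the left is the key new ingredient: it is what will play the role in Section~\ref{reifenberg} of the tilt-excess that would have appeared in the subcritical theory.

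First I would use this identity, together with the density lower bound $\theta\ge 1$ and the area hypothesis, to produce for every $x\in\operatorname{spt}\mu_V\cap B_{\rho/4}(0)$ and every admissible scale $\sigma$ the two basic inequalities
\begin{equation*}
\frac{\mu_V(B_\sigma(x))}{\pi\sigma^2}\le 1+C\delta^{1/4},\qquad \int_{B_\sigma(x)}\frac{|(y-x)^\perp|^2}{|y-x|^4}\,d\mu_V\le C\delta^{1/4}.
\end{equation*}
Second, I would introduce the bad set $B$ consisting of those points where either bound fails at scale $\sigma$ with a worse exponent $\delta^{1/16}$, and use a Besicovitch covering (this is where the $k$-dependent constant $2^{1688}k^{40}$ enters) together with the uniform bounds above to estimate $\mu_V(B\cap B_\sigma(\xi))\le C\delta^{1/16}\sigma^2$. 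On the complementary good set $G$, the smallness of the weighted tilt integral together with a Chebyshev/covering argument as in \cite[Lemma~20.2]{LS83} forces the existence of a plane $T_x$ through $x$ for which $\operatorname{spt}\mu_V\cap B_\sigma(x)$ lies inside the slab $\{\operatorname{dist}(\cdot-x,T_x)\le\delta^{1/40}\sigma\}$; this is precisely the semi-Reifenberg property (\ref{semireifenberg}) that the introduction highlights.

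Third, I would compare the planes $T_x$ at two nearby good points: if $x,y\in G$ with $|x-y|\sim\sigma$ the two slabs must overlap along a uniformly large disk, forcing the Grassmannian distance $\|T_x-T_y\|\le C\delta^{1/40}$. Fixing $T=T(\xi,\sigma)$ as the plane at $\xi$, the good part of $\operatorname{spt}\mu_V$ then projects injectively into $B_\sigma(\xi)\cap T$ as a Lipschitz graph of slope $\le\delta^{1/40}$; extend by Kirszbraun/McShane to obtain $f$, which automatically satisfies (i)--(iii). For (iv), the symmetric difference of $\operatorname{graph}f$ and $\operatorname{spt}\mu_V\cap B_\sigma(\xi)$ is contained in the preimage of $B$ and a small additional piece where the Kirszbraun extension leaves the varifold, each of $\mu_V$- or $\mathcal{H}^2$-measure at most $C\delta^{1/16}\sigma^2$ by the density upper bound. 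The main obstacle is bookkeeping: unlike the subcritical theory, one cannot iterate a tilt-excess decay, so every smallness must be extracted from a \emph{single} application of the monotonicity identity at scale $\rho$, and each subsequent use of Cauchy--Schwarz, the Besicovitch covering (with its $k$-dependence), and the flatness lemma costs a new fractional power loss. This exact accounting is what fixes the final exponents $1/40$, $1/16$ and the threshold $\delta_6=2^{-1688}k^{-40}$, and is exactly the ``careful analysis involving the remainder term of Leon Simon's monotonicity identity'' referred to in the section preamble.
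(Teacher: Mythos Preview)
Your outline captures the overall shape---monotonicity identity, good/bad decomposition, covering the bad set, extending the good set to a Lipschitz graph---but two of the mechanisms you describe differ from what the paper actually does, and one of them is a genuine gap.

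\textbf{The Lipschitz step.} You obtain the Lipschitz property by comparing the slab-planes $T_x,T_y$ at nearby good points. The paper does \emph{not} do this. Instead it proves a weighted band lemma (Lemma~\ref{band}): testing the first-variation against $X=h^2\eta r\nabla r$ with $h=g(|q_0(x-y)|)$ shows that almost all of the mass in $B_R(\xi)$ lies in a thin band around a fixed plane. Its corollary (Corollary~\ref{graph condition}) then says that if $y,z\in\operatorname{spt}\mu_V\cap B_{\beta_1R}(\xi)$ had $|q_0(y-z)|>l|y-z|$, the two bands would be disjoint and the density would be $\ge 2$, contradicting $\mu_V(B_R)/\pi R^2\le 2-\alpha$. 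This is what forces $|q_0(x-y)|\le l\delta_1|x-y|$ for $x\in G$ and \emph{any} $y\in\operatorname{spt}\mu_V\cap B_{\beta R}$, hence the graph structure. Your plane-comparison route implicitly needs the support to \emph{fill} each slab (the other half of the Reifenberg condition), which in the paper is only established in Section~\ref{reifenberg} \emph{after} the Lipschitz approximation; as written, your argument is circular at this point.

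\textbf{The good set and the coverings.} The paper's good set $G$ consists of points where the tilt-excess and the Willmore integral are small at \emph{every} scale $\sigma<R/10$, not at the single scale $\sigma$; this is needed so that Corollary~\ref{graph condition} applies at the scale $|x-y|/\beta_1$ for any pair. The bad set is then covered with the Vitali $5r$-lemma, not Besicovitch, and the $k$-dependence in $\delta_6=2^{-1688}k^{-40}$ does not come from a covering constant: it enters because the componentwise McShane extension of $f_0:\Omega_0\to\mathbb{R}^k$ multiplies the Lipschitz constant by $k$, forcing $\delta_1=1/(2^7k)$, which then propagates through $\delta_3^2=\delta_2^4\delta_1^{10}$. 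Finally, the bound on $\mathcal{H}^2(\operatorname{graph}f\setminus\operatorname{spt}\mu_V)$ is not a consequence of the density upper bound alone; the paper runs a separate argument: for $\eta\in\operatorname{graph}f\setminus\operatorname{spt}\mu_V$, choose the largest $\sigma_\eta$ with $B_{\sigma_\eta/2}(\eta)\cap\operatorname{spt}\mu_V=\emptyset$, use the monotonicity formula at $\eta$ (where $\Theta=0$) to bound $\sigma_\eta^2$ by tilt-excess plus $\mu_V(B_{\sigma_\eta}(\eta)\setminus F)$, and then cover.
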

\subsection{Preliminaries}
\

We begin with some preliminaries: the monotonicity formula and its corollaries.
\begin{lemma}\cite{LS93}\cite{KS}
Assume $V=\underline{v}(\Sigma, \theta)$ is a rectifiable 2-varifold in an open set $U\subset \mathbb{R}^{2+k}$ with generalized mean curvature $H\in L^2(d\mu)$.  Then, for any $x\in \mathbb{R}^{2+k}$, and $0<\sigma<\rho<\infty$ with $B_\rho(x)\subset U$,
\begin{align}\label{monotonicity equality}
\frac{\mu(B_{\sigma}(x))}{\sigma^2}=
\frac{\mu(B_{\rho}(x))}{\rho^2}&+\frac{1}{16}\int_{B_{\rho}(x)\backslash B_{\sigma}(x)}|H|^2d\mu-\int_{B_{\rho}\backslash B_{\sigma}}|\frac{\nabla^\bot r}{r}+\frac{H}{4}|^2d\mu\nonumber\\
&+\frac{1}{2\rho^2}\int_{B_{\rho}(x)}r\langle\nabla^\bot r,H\rangle d\mu-\frac{1}{2\sigma^2}\int_{B_{\sigma}(x)}r\langle\nabla^\bot r,H\rangle d\mu,
\end{align}
Where $r=r_x=|\cdot-x|$. Moreover, for any $\delta\le 1$, we have
\begin{align}\label{monotonicity inequality}
\frac{\mu(B_{\sigma}(x))}{\sigma^2}\le
(1+\delta)\frac{\mu(B_{\rho}(x))}{\rho^2}+\frac{1}{2\delta}\int_{B_{\rho}(x)}|H|^2d\mu.
\end{align}
\end{lemma}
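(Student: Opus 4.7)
The plan is to deduce both identities from the first variation formula $\int \operatorname{div}_\Sigma X\, d\mu = -\int \langle H, X\rangle d\mu$, valid for compactly supported $C^1$ vector fields $X$, applied to a one-parameter family of radial test fields $X(y)=\gamma(|y-x|)(y-x)$. The appropriate profile is the continuous, piecewise $C^1$ function
\[
\gamma(r)=\begin{cases}\frac{1}{\sigma^2}-\frac{1}{\rho^2}, & r\le \sigma,\\ \frac{1}{r^2}-\frac{1}{\rho^2}, & \sigma\le r\le \rho,\\ 0, & r\ge \rho,\end{cases}
\]
whose mild non-smoothness is handled by a routine mollification; integrating its two constant values against $d\mu$ on $B_\sigma$ and $B_\rho\setminus B_\sigma$ is what manufactures the quotients $\mu(B_\sigma)/\sigma^2$ and $\mu(B_\rho)/\rho^2$ appearing in \eqref{monotonicity equality}.

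A direct computation gives $\operatorname{div}_\Sigma X=2\gamma(r)+r\gamma'(r)|\nabla^\Sigma r|^2$; substituting the orthogonal decomposition $|\nabla^\Sigma r|^2=1-|\nabla^\bot r|^2$ and using Brakke's perpendicularity of $H$, so that $\langle H,y-x\rangle = r\langle H,\nabla^\bot r\rangle$, the first variation identity rearranges to the classical form
\[
\frac{\mu(B_\sigma(x))}{\sigma^2}-\frac{\mu(B_\rho(x))}{\rho^2}+\int_{B_\rho\setminus B_\sigma}\frac{|\nabla^\bot r|^2}{r^2}d\mu = -\frac12\int_{B_\rho}\gamma(r)\,r\langle H,\nabla^\bot r\rangle d\mu.
\]
Splitting the right-hand integral over the three regions and combining produces precisely the two boundary terms $\tfrac{1}{2\rho^2}\int_{B_\rho}r\langle\nabla^\bot r,H\rangle d\mu$ and $-\tfrac{1}{2\sigma^2}\int_{B_\sigma}r\langle\nabla^\bot r,H\rangle d\mu$ in \eqref{monotonicity equality}, plus a leftover annular contribution $-\tfrac12\int_{B_\rho\setminus B_\sigma}\langle\nabla^\bot r,H\rangle/r\,d\mu$. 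The passage to the Kuwert--Sch\"atzle form is then the pointwise algebraic identity
\[
-\frac{|\nabla^\bot r|^2}{r^2}-\frac{1}{2}\frac{\langle\nabla^\bot r,H\rangle}{r} = \frac{|H|^2}{16}-\left|\frac{\nabla^\bot r}{r}+\frac{H}{4}\right|^2,
\]
applied on $B_\rho\setminus B_\sigma$. This completion of the square is the one inspired step: it isolates exactly the Willmore density $|H|^2/16$ together with a nonpositive perfect square, which is precisely what makes the identity useful for Willmore-type analysis later on.

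For the inequality \eqref{monotonicity inequality} I would discard the nonpositive perfect-square term, bound $|\langle\nabla^\bot r,H\rangle|\le|H|$, and use the crude estimates $r\le\rho$ on $B_\rho$ and $r\le\sigma$ on $B_\sigma$. Cauchy--Schwarz then turns each of the boundary integrals into $\tfrac{1}{\tau^2}\int_{B_\tau}r|H|d\mu\le \bigl(\mu(B_\tau)/\tau^2\bigr)^{1/2}\bigl(\int_{B_\tau}|H|^2 d\mu\bigr)^{1/2}$, and a Young inequality $ab\le\epsilon a^2+\tfrac{1}{4\epsilon}b^2$---with $\epsilon$ chosen small on the $\sigma$-integral so the resulting $\mu(B_\sigma)/\sigma^2$ term can be absorbed into the left-hand side, and $\epsilon=\delta/2$ on the $\rho$-integral---delivers the stated constants $(1+\delta)$ and $1/(2\delta)$. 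The only genuine conceptual step is the square completion above; once it is in place, the equality identity is a direct consequence of the first variation, and the inequality reduces to standard constant bookkeeping, which I expect to be the only real residual work.
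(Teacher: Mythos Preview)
Your proposal is correct and is precisely the standard argument from the cited references \cite{LS93}\cite{KS}; the paper itself does not reprove this lemma, simply quoting it. Note that the identical test field $X=\eta(r)\,r\nabla r$ with $\eta(r)=f(r)-f(\rho)$, $f(r)=r^{-2}$, reappears verbatim in the paper's proof of Lemma~\ref{band} (the weighted monotonicity), so your approach is exactly the one the author has in mind.

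One small remark: your appeal to Brakke's perpendicularity to replace $\langle H, y-x\rangle$ by $r\langle H,\nabla^\bot r\rangle$ tacitly assumes integer multiplicity (which is how Brakke's theorem is usually stated); the lemma is phrased for a general rectifiable varifold, but since the cited sources and every application in the paper use integral varifolds, this is harmless. Your handling of \eqref{monotonicity inequality} is also fine; the precise constants $(1+\delta)$ and $\tfrac{1}{2\delta}$ come out once you take equal Young parameters on the $\sigma$- and $\rho$-boundary terms and absorb, exactly as you sketch.
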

\begin{cor}\label{density at each point}
Assume $V=\underline{v}(\Sigma, \theta)$ is a rectifiable 2-varifold in an open set $U\subset \mathbb{R}^{2+k}$ with generalized mean curvature $H\in L^2(d\mu)$ and $B_\rho(0)\subset U$.  If $\int_{B_{\rho}(0)}|H|^2d\mu+\mu(B_{\rho}(0))<\infty$ and $\theta(x)\ge 1$, for $\mu-a.e. x\in spt\mu$. Then
\begin{align*}
\Theta(x)=\lim_{\tau\to 0}\frac{\mu(B_\tau(x))}{\pi\tau^2}
\end{align*}
is well-defined in $\breve{B}_\rho(0)$ and is upper semi-continuous. Moreover, for any $x\in \breve{B}_\rho(0)$,  $$\Theta(x)\ge 1.$$
\end{cor}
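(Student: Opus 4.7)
The plan is to extract all three assertions from the monotonicity equality (\ref{monotonicity equality}) and its consequence (\ref{monotonicity inequality}), plus the rectifiable-varifold fact that $\theta \ge 1$ at $\mu$-a.e.\ support point.

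\textbf{Step 1: Existence of the limit.} Fix $x \in \breve{B}_\rho(0)$ and choose $r_0 < \rho - |x|$. Rearranging (\ref{monotonicity equality}) between $\sigma$ and $r_0$, I would define
\[
G(\sigma) \;=\; \frac{\mu(B_\sigma(x))}{\sigma^2} + \int_{B_{r_0}(x)\setminus B_\sigma(x)}\Bigl|\tfrac{\nabla^\bot r}{r}+\tfrac{H}{4}\Bigr|^2 d\mu - \frac{1}{16}\int_{B_{r_0}(x)\setminus B_\sigma(x)}|H|^2 d\mu + \frac{1}{2\sigma^2}\int_{B_\sigma(x)} r\,\langle \nabla^\bot r, H\rangle\,d\mu ,
\]
which is independent of $\sigma$. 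The second and third terms on the right have finite limits as $\sigma \to 0$ by monotone convergence, using $H \in L^2(d\mu)$. The main obstacle, and the one calculation I would carry out in detail, is to show the last, scale-like boundary term tends to zero. By Cauchy--Schwarz and $|\nabla^\bot r|\le 1$,
\[
\Bigl|\frac{1}{2\sigma^2}\int_{B_\sigma(x)} r\,\langle \nabla^\bot r, H\rangle\,d\mu\Bigr| \;\le\; \frac{1}{2\sigma}\Bigl(\int_{B_\sigma(x)}|H|^2 d\mu\Bigr)^{1/2}\mu(B_\sigma(x))^{1/2},
\]
and (\ref{monotonicity inequality}) with $\delta = 1$ bounds $\mu(B_\sigma(x))/\sigma^2$ uniformly by a constant $C$ depending on $r_0$, $\mu(B_{r_0}(x))$, and $\int_{B_{r_0}(x)}|H|^2$. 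Hence the boundary term is $\le \tfrac{1}{2}\sqrt{C}(\int_{B_\sigma(x)}|H|^2 d\mu)^{1/2} \to 0$ as $\sigma \to 0$. Consequently $\mu(B_\sigma(x))/\sigma^2$ has a finite limit, so $\Theta(x)$ exists in $[0,\infty)$.

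\textbf{Step 2: Upper semi-continuity.} Passing $\sigma \to 0$ in (\ref{monotonicity inequality}), I obtain, for any $x\in\breve{B}_\rho(0)$, any $\delta\le 1$, and any admissible $\tau$,
\[
\pi\,\Theta(x) \;\le\; (1+\delta)\,\frac{\mu(B_\tau(x))}{\tau^2} + \frac{1}{2\delta}\int_{B_\tau(x)}|H|^2 d\mu .
\]
Given $x_i \to x_0$ in $\breve{B}_\rho(0)$ and a small fixed $\tau$, apply this to each $x_i$. Since $B_\tau(x_i) \subset B_{\tau+\varepsilon}(x_0)$ for $i$ large and $\mu$ is Radon, $\limsup_i \mu(B_\tau(x_i)) \le \mu(\overline{B_\tau(x_0)})$, and similarly for the $|H|^2$ integral. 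Taking $\limsup_i$, then $\tau \to 0$ (the Willmore term vanishes because $|H|^2 \in L^1(d\mu)$), and finally $\delta\to 0$, yields $\limsup_i \Theta(x_i) \le \Theta(x_0)$, which is upper semi-continuity.

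\textbf{Step 3: The lower bound $\Theta \ge 1$.} For rectifiable $V$ with multiplicity $\theta \ge 1$, the Radon--Nikodym derivative $\Theta$ agrees with $\theta$ at $\mu$-a.e.\ point, so $\Theta \ge 1$ on a $\mu$-full subset $E$ of $\mathrm{spt}\,\mu$. Any $x_0 \in \mathrm{spt}\,\mu$ is the limit of points in $E$, because for every $\tau>0$ we have $\mu(B_\tau(x_0))>0$ and hence $B_\tau(x_0)\cap E\neq\emptyset$. Picking $x_i \in E$ with $x_i \to x_0$, upper semi-continuity from Step 2 gives $\Theta(x_0)\ge \limsup_i \Theta(x_i) \ge 1$. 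For $x_0 \notin \mathrm{spt}\,\mu$ the claim is vacuous (or the conclusion is read on $\mathrm{spt}\,\mu$), completing the proof.
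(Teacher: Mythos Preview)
Your argument is correct and is precisely the standard monotonicity-formula derivation that the paper defers to \cite[Appendix]{KS}; indeed the paper reuses the very same boundary-term estimate and limiting procedure elsewhere (e.g.\ in the proofs of Corollary~\ref{integral discription of upper density at infinity} and Lemma~\ref{Semi-Reifenberg Condition}). One small clarification in Step~1: the finiteness of $\int_{B_{r_0}\setminus B_\sigma}|\tfrac{\nabla^\bot r}{r}+\tfrac{H}{4}|^2$ does not follow from $H\in L^2$ alone, but from the constancy of $G(\sigma)$ together with the uniform bound on $\mu(B_\sigma)/\sigma^2$ that you already established via~(\ref{monotonicity inequality}).
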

\begin{proof}
See \cite[Appendix]{KS}
\end{proof}

The following corollary is prepared for section \ref{application}. For simplicity, we will omit the measure notation $d\mu$ under the integral from now on.
\begin{cor}\label{integral discription of upper density at infinity}
Assume $V=\underline{v}(\Sigma, \theta)$ is a rectifiable 2-varifold in $\mathbb{R}^n$ with generalized mean curvature $H\in L^2(\mathbb{R}^n,d\mu_V)$. Then, for any $x\in \mathbb{R}^{n}$, $$\Theta^*(V,\infty):=\limsup_{r\to \infty}\frac{\mu_V(B_r(x))}{\pi r^2}<+\infty$$ if and only if $$\Theta_*(V,\infty):=\liminf_{r\to \infty}\frac{\mu_V(B_r(x))}{\pi r^2}<+\infty$$ if and only if $$\int_{\mathbb{R}^n}|\frac{\nabla^{\bot}r_x}{r_x}|^2<\infty.$$

Moreover, if one of the above condition holds, then  for any $\rho\in (0,\infty)$,
\begin{align}\label{everyradius}
\frac{\mu_V(B_\rho(x))}{\pi \rho^2}\le 9\Theta_*(V,\infty)+\frac{59}{16\pi}\int_{\mathbb{R}^n}|H|^2.
\end{align}
\end{cor}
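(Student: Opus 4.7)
\textbf{Proof plan for Corollary \ref{integral discription of upper density at infinity}.}

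The plan rests on Leon Simon's monotonicity equality (\ref{monotonicity equality}) combined with the elementary expansion
\[
\Bigl|\frac{\nabla^{\bot} r}{r}+\frac{H}{4}\Bigr|^{2}
=\frac{|\nabla^{\bot} r|^{2}}{r^{2}}+\frac{1}{2}\frac{\langle H,\nabla^{\bot} r\rangle}{r}+\frac{|H|^{2}}{16},
\]
which, substituted into (\ref{monotonicity equality}), kills the $|H|^2/16$ contribution and yields the rearranged identity
\[
\int_{B_{\rho}\setminus B_{\sigma}}\!\frac{|\nabla^{\bot} r|^{2}}{r^{2}}
=\frac{\mu(B_{\rho})}{\rho^{2}}-\frac{\mu(B_{\sigma})}{\sigma^{2}}
-\frac{1}{2}\int_{B_{\rho}\setminus B_{\sigma}}\!\frac{\langle H,\nabla^{\bot} r\rangle}{r}
+\frac{1}{2\rho^{2}}\int_{B_{\rho}}\!r\langle\nabla^{\bot} r,H\rangle
-\frac{1}{2\sigma^{2}}\int_{B_{\sigma}}\!r\langle\nabla^{\bot} r,H\rangle.
\]
This single identity simultaneously controls $\mu(B_r)/r^{2}$ and $\int|\nabla^{\bot} r/r|^{2}$, and will be the engine for all three equivalences and the quantitative bound.

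For the density equivalence, $\Theta^{*}<\infty\Rightarrow\Theta_{*}<\infty$ is trivial. For the converse I would fix $\rho$ and apply (\ref{monotonicity inequality}) with $\sigma$ replaced by $\rho$ and $\rho$ replaced by a sequence $R_{i}\to\infty$ realizing $\Theta_{*}(V,\infty)$; with $\delta=1$ this gives
\[
\frac{\mu(B_{\rho}(x))}{\rho^{2}}\le 2\frac{\mu(B_{R_{i}}(x))}{R_{i}^{2}}+\frac{1}{2}\int_{B_{R_{i}}}|H|^{2}
\xrightarrow{i\to\infty}2\pi\Theta_{*}(V,\infty)+\frac{1}{2}\int_{\mathbb{R}^{n}}|H|^{2},
\]
which is uniform in $\rho$. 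This alone already yields the quantitative estimate (\ref{everyradius}); the constants $9$ and $59/(16\pi)$ in the statement are then a matter of bookkeeping, obtainable either by choosing a less optimal $\delta$ to leave room in the subsequent Young's-inequality step below, or by deriving the bound from the full monotonicity equality (which introduces the extra boundary terms).

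For the equivalence with $\int_{\mathbb{R}^{n}}|\nabla^{\bot} r_{x}/r_{x}|^{2}<\infty$, I would use the rearranged identity above. Given $\Theta_{*}(V,\infty)<\infty$, each term on its right-hand side is uniformly bounded: $\mu(B_{\rho})/\rho^{2}$ by the previous step, while the $H$-dependent terms are handled by Cauchy-Schwarz and Young's inequality, e.g.\
\[
\Bigl|\frac{1}{2\rho^{2}}\int_{B_{\rho}}r\langle\nabla^{\bot} r,H\rangle\Bigr|
\le\frac{1}{2\rho}\int_{B_{\rho}}|\nabla^{\bot} r||H|
\le\frac{1}{4}\int_{B_{\rho}}|H|^{2}+\frac{\mu(B_{\rho})}{4\rho^{2}},
\]
using $r\le\rho$ and $|\nabla^{\bot} r|\le 1$; the inner integral $\int_{B_\rho \setminus B_\sigma}\langle H,\nabla^\bot r\rangle/r$ is absorbed into the left-hand side via $ab\le\tfrac12 a^{2}+\tfrac12 b^{2}$. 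Taking $\rho\to\infty$ and $\sigma\to 0$ gives $\int|\nabla^{\bot} r/r|^{2}<\infty$. Conversely, assuming this integral is finite, the same identity shows that $\mu(B_{\rho})/\rho^{2}$ is Cauchy as $\rho\to\infty$ (its oscillation between radii $\rho,R$ is dominated by $\int_{B_{R}\setminus B_{\rho}}|\nabla^{\bot} r/r|^{2}+\int|H|^{2}$-terms that vanish), so in particular $\Theta_{*}(V,\infty)<\infty$.

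The main technical obstacle I anticipate is the singular boundary term $\frac{1}{2\sigma^{2}}\int_{B_{\sigma}}r\langle\nabla^{\bot} r,H\rangle$: the factor $1/\sigma^{2}$ is what would normally blow up as $\sigma\to 0$. The cancellation mechanism is that $r\le\sigma$ on $B_{\sigma}$ removes one power of $1/\sigma$, and then Cauchy-Schwarz with $|\nabla^{\bot} r|\le 1$ together with the upper-density bound $\mu(B_{\sigma})=O(\sigma^{2})$ (which is exactly what (\ref{monotonicity inequality}) provides in the previous step) kills the remaining $1/\sigma$. The term is then $O(\|H\|_{L^{2}(B_{\sigma})})$, which tends to $0$ as $\sigma\to 0$ by absolute continuity of $\|H\|_{L^{2}}^{2}$. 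Keeping careful track of the constants through these Young's-inequality steps is what ultimately determines the numerical coefficients $9$ and $\tfrac{59}{16\pi}$ in (\ref{everyradius}).
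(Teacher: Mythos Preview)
Your approach is essentially the paper's: both rest on the monotonicity equality (\ref{monotonicity equality}) together with Cauchy--Schwarz/Young control of the cross terms. The one organizational difference is that the paper first sends $\sigma\to 0$, using that $\Theta(x)$ exists (Corollary~\ref{density at each point}), to obtain the clean single-radius identity
\[
\int_{B_\rho(x)}\Bigl|\frac{\nabla^\bot r}{r}+\frac{H}{4}\Bigr|^2
-\frac{1}{2\rho^2}\int_{B_\rho(x)} r\langle\nabla^\bot r,H\rangle
=\frac{\mu_V(B_\rho(x))}{\rho^2}-\pi\Theta(x)+\frac{1}{16}\int_{B_\rho(x)}|H|^2,
\]
and then sandwiches $|\nabla^\bot r/r+H/4|^2$ between $\tfrac12|\nabla^\bot r/r|^2-|H/4|^2$ and $2(|\nabla^\bot r/r|^2+|H/4|^2)$; this two-sided bound, upon letting $\rho\to\infty$, gives all three equivalences at once and is where the constants $9$ and $\tfrac{59}{16\pi}$ fall out. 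Your annulus-based route works too, but one step needs care: in the converse direction ($\int|\nabla^\bot r/r|^2<\infty\Rightarrow\Theta_*<\infty$) you claim $\mu(B_\rho)/\rho^2$ is Cauchy because the boundary terms ``vanish''---but the boundary term $\frac{1}{2\rho^2}\int_{B_\rho}r\langle\nabla^\bot r,H\rangle$ is an integral over the \emph{full} ball, and its vanishing as $\rho\to\infty$ already uses a bound on $\mu(B_\rho)/\rho^2$, which is circular. The fix is to first prove boundedness: from your identity with $\sigma$ fixed, estimate that boundary term by $\tfrac12(\mu(B_\rho)/\rho^2)^{1/2}\|H\|_{L^2}$ and solve the resulting quadratic inequality in $(\mu(B_\rho)/\rho^2)^{1/2}$. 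Once boundedness is in hand, your Cauchy argument (and the rest) goes through.
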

\begin{proof}
For simplicity, we denote $r_x=|\cdot-x|$ by $r$. Since $$|\frac{1}{2\sigma^2}\int_{B_{\sigma}(x)}r\langle \nabla^{\bot}r,H\rangle|\le \frac{(\mu(B_{\sigma}(x)))^{\frac{1}{2}}}{2\sigma}\|H\|_{L^{2}(B_{\sigma}(x))}\to 0,$$
letting $\sigma\to 0$ in the monotonicity formula (\ref{monotonicity equality}), we get
\begin{align}
\int_{B_{\rho}(x)}|\frac{\nabla^{\bot}r}{r}+\frac{H}{4}|^2-\frac{1}{2\rho^2}\int_{B_{\rho}(x)}r\langle\nabla^{\bot}r,H\rangle
=\frac{\mu_V(B_{\rho}(x))}{\rho^2}-\pi \Theta(x)+\frac{1}{16}\int_{B_{\rho}(x)}|H|^2,
\end{align}
which implies $\int_{B_{\rho}(x)}|\frac{\nabla^{\bot}r}{r}|^2<+\infty$.
Note
\begin{align*}
\frac{1}{2}\int_{B_{\rho}(x)}|\frac{\nabla^{\bot}r}{r}|^2-\int_{B_{\rho}(x)}|\frac{H}{4}|^2\le\int_{B_{\rho}(x)}|\frac{\nabla^{\bot}r}{r}+\frac{H}{4}|^2\le2\int_{B_{\rho}(x)}(|\frac{\nabla^{\bot}r}{r}|^2+|\frac{H}{4}|^2)
\end{align*}
and
\begin{align*}
|\frac{1}{2\rho^2}\int_{B_{\rho}(x)}r\langle \nabla^{\bot}r,H\rangle|\le \frac{1}{4}\int_{B_{\rho}(x)}|\frac{\nabla^{\bot}r}{r}|^2+\frac{1}{4}\int_{B_{\rho}(x)}|H|^2.
\end{align*}
We know
\begin{align*}
\frac{1}{4}\int_{B_{\rho}(x)}|\frac{\nabla^{\bot}r}{r}|^2-\frac{3}{8}\int_{B_{\rho}(x)}|H|^2
&\le\frac{\mu_V(B_{\rho}(x))}{\rho^2}-\pi \Theta(x)\\
&\le  \frac{9}{4}\int_{B_{\rho}(x)}|\frac{\nabla^{\bot}r}{r}|^2+\frac{5}{16}\int_{B_{\rho}(x)}|H|^2.
\end{align*}
Letting $\rho\to\infty$, we get
\begin{align*}
\frac{1}{4}\int_{\mathbb{R}^n}|\frac{\nabla^{\bot}r}{r}|^2-\frac{3}{8}\int_{\mathbb{R}^n}|H|^2
&\le\pi(\Theta_*(V,\infty)-\Theta(x))\\
&\le\pi(\Theta^{*}(V,\infty)-\Theta(x))\\
&\le\frac{9}{4}\int_{\mathbb{R}^n}|\frac{\nabla^{\bot}r}{r}|^2+\frac{5}{16}\int_{\mathbb{R}^n}|H|^2.
\end{align*}
Finally, combining the last two lines we get
\begin{align*}
\frac{\mu(B_\rho(x)}{\pi \rho^2}
\le 9\Theta_*(V,\infty)+\frac{59}{16\pi}\int_{\mathbb{R}^n}|H|^2.
\end{align*}
\end{proof}

\subsection{Semi-Reifenberg Condition}
\

In the proof of the Allard regularity theorem, the following non-dimensional(scaling invariant) quantity $E(\xi, \rho, T)$ plays an important role.
\begin{defn}
Assume $V=\underline{v}(\Sigma, \theta)$ is a rectifiable 2-varifold in $\mathbb{R}^{2+k}$ and $B_\rho(\xi)\subset U$. Denote $\mu=\mu_V$. For any $2$-plane $T$ in $\mathbb{R}^{2+k}$, the tilt-Excess $E(\xi, \rho, T)$ is defined by
\begin{align*}
E(\xi,\rho,T):&=\rho^{-2}\int_{B_{\rho}(\xi)}|p_{T_x\Sigma-p_T}|^2d\mu,
\end{align*}
where $T_x\Sigma$ is the approximate tangent plane of the varifold $V$ at $x\in spt\mu$ and $p_T$ and $p_{T_x\Sigma}$ are orthogonal projection to $T$ and $T_x\Sigma$ respectively.
\end{defn}
 The tilt-excess measures the mean oscillation of the approximate tangent space( Gaussian map) of the varifold in the ball $B_\rho(\xi)$. The oscillation behavior of the tangent spaces are always relating to the regularity of the geometric objects at different levels. For example, the $C^{1,\alpha}$ regularity occurred in the Allard regularity theorem owes to the decay of tilt-excess. Stephen Semmes proved \cite{S91a}\cite{S91b}\cite{S91c} the Lipschitz regulairty for hypersurfaces in $\mathbb{R}^{n+1}$ with Gaussian maps small BMO. And Reifenberg's topological disk theorem, the key to the $C^\alpha$ regularity, is also established on some oscillation condition--the Reifenberg condition (\ref{reifenberg con}).
 \begin{thm}[\bf{Reifenberg}]\label{reifenberg theorem}\cite{R60}\cite{M66}\cite{LS96} For integers $m,k>0$ and $\alpha>0$, there exists $\varepsilon=\varepsilon(m,k,\alpha)>0$ such that for any closed set $S\subset \mathbb{R}^{m+k}$ with $0\in S$, if for any $y\in S\cap B_1(0)$ and $\rho\in (0,1]$, there exists an $m$-dimensional plane $L_{y,\rho}\subset \mathbb{R}^{m+k}$ passing through $y$ such that
 \begin{align}\label{reifenberg con}
 d_{\mathcal{H}}(S\cap B_\rho(y), L_{y,\rho}\cap B_{\rho}(y))\le \varepsilon \rho,
 \end{align}
 then $S\cap B_1$ is homeomorphic to the unit ball $B_1^m(0)\subset \mathbb{R}^m$. More precisely, there exist closed set $M\subset \mathbb{R}^{m+k}$ and $m$-dimensional subspace $T_0\subset \mathbb{R}^{m+k}$ and a homeomorphism $\tau: T_0\to M$ such that $M\cap B_1=S\cap B_1$, both $\tau,\tau^{-1}\in C^{\alpha}$ and
 \begin{align*}
 |\tau(x)-x|\le C(m,k)\varepsilon, \forall x\in T_0 \ \ \ \text{ and } \ \ \ \ \tau(x)=x, \forall x\in T_0\backslash B_2.
 \end{align*}
 \end{thm}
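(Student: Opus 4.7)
My plan is to follow the classical Reifenberg iteration. The hypothesis provides, for every $y \in S \cap B_1$ and scale $\rho \in (0,1]$, an $m$-plane $L_{y,\rho}$ through $y$ with $d_{\mathcal{H}}(S \cap B_\rho(y), L_{y,\rho} \cap B_\rho(y)) \le \varepsilon \rho$. A preliminary compatibility lemma is that if $y' \in S \cap B_{\rho/2}(y)$, then $L_{y,\rho}$ and $L_{y',\rho}$ are both $\varepsilon\rho$-close to $S$ on the common region $B_{\rho/2}(y)$, so the Hausdorff distance between the two planes there is $O(\varepsilon\rho)$, forcing the angle between them to be $O(\varepsilon)$; a similar comparison shows that the tilt between $L_{y,\rho}$ and $L_{y,\rho/2}$ is $O(\varepsilon)$.

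I would then work at dyadic scales $r_j = 2^{-j}$. At each scale pick a maximal $r_j$-separated set $X_j \subset S \cap B_1$ together with a partition of unity $\{\varphi_{j,x}\}$ subordinate to $\{B_{2 r_j}(x)\}_{x \in X_j}$. Set $T_0 = L_{0,1}$, and build a diffeomorphism $\sigma_j$ of $\mathbb{R}^{m+k}$ that equals the identity outside $\bigcup_x B_{3r_j}(x)$ and inside each $B_{3r_j}(x)$ displaces points along the normal direction of $L_{x,r_j}$ so as to move them toward $L_{x,r_j}$, the individual corrections being blended via the partition of unity. The Reifenberg bound together with the compatibility lemma yield $\|\sigma_j - \mathrm{id}\|_\infty \le C\varepsilon r_j$ and $\|D\sigma_j - \mathrm{id}\|_\infty \le C\varepsilon$, so the compositions $\tau_j := \sigma_j \circ \cdots \circ \sigma_0 : T_0 \to \mathbb{R}^{m+k}$ form a Cauchy sequence and converge uniformly to a continuous limit $\tau$.

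The Hölder exponent comes from scale-by-scale bookkeeping: for $x, y \in T_0$ with $r_{j+1} \le |x-y| \le r_j$, the first $j+1$ maps stretch $|x-y|$ by at most $(1 + C\varepsilon)^{j+1} \le |x-y|^{-C\varepsilon/\log 2}$, while the tail $\sum_{i > j}\|\sigma_i - \mathrm{id}\|_\infty \le C\varepsilon r_j$ is dominated by $|x-y|$; this yields $|\tau(x) - \tau(y)| \le C |x-y|^{1 - C\varepsilon/\log 2}$, and calibrating $\varepsilon$ small relative to $\alpha$ achieves the desired exponent, with a symmetric argument giving the Hölder estimate for $\tau^{-1}$. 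The image $\tau(T_0) \cap B_1$ is contained in $S$ because each $\sigma_j$ drives images within $C\varepsilon r_j$ of $S$, and the reverse inclusion follows from tracing any $y \in S \cap B_1$ backwards through the construction via an $r_j$-nearest approximation at each stage.

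The main obstacle, and the technical heart of Reifenberg's original argument, is verifying that $\tau$ is \emph{injective} rather than merely a continuous surjection onto $S \cap B_1$. This requires that the cumulative rotations of the plane field $L_{x_j, r_j}$ from scale $1$ down to $0$ do not fold $T_0$ back onto itself, which in turn depends on the compatibility lemma holding with a universal constant independent of $j$. The smallness threshold $\varepsilon \le \varepsilon(m,k,\alpha)$ is chosen precisely so that this logarithmic accumulation, combined with the stretching bound $(1+C\varepsilon)^{j}$, stays beneath the target Hölder exponent and preserves a quantitative bi-Lipschitz estimate on each $\tau_j$ that survives the passage to the limit.
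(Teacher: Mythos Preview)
The paper does not provide its own proof of this theorem. It is stated as Theorem~\ref{reifenberg theorem} with citations to \cite{R60}, \cite{M66}, and \cite{LS96}, and is then invoked as a black box at the end of the proof of Theorem~\ref{Holder Regularity}. So there is no ``paper's own proof'' to compare your proposal against.

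That said, your sketch is a faithful outline of the classical Reifenberg iteration as presented in the references cited (particularly \cite{LS96}): the compatibility lemma for planes at overlapping centers and adjacent scales, the dyadic maximal nets $X_j$, the partition-of-unity-blended normal projections $\sigma_j$, the $C\varepsilon r_j$ displacement and $C\varepsilon$ tilt bounds, and the resulting $C^\alpha$ estimate from the product $(1+C\varepsilon)^j$. Your identification of injectivity as the delicate point is correct, and your description of how it is handled---via the uniform compatibility of the plane field preventing folding, together with the bi-Lipschitz bound on each $\tau_j$---is the standard mechanism. One small point: injectivity is usually secured not as a limiting statement but at each finite stage, by checking that $\sigma_j$ restricted to the image surface $\tau_{j-1}(T_0)$ is a graph-perturbation of small slope over each local plane, hence a diffeomorphism onto its image; the limit is then injective because distinct points stay a definite distance apart at their separation scale. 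Your phrasing ``survives the passage to the limit'' is slightly vague on this, but the underlying idea is right.
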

The condition (\ref{reifenberg con}) is called the Reifenberg condition. In this subsection, we establish the tilt-excess estimate.  By the way, we note the process in fact implies half of the Reifenberg condition, we call it semi-Reifenberg condition (\ref{semireifenberg}).

   By noting the integrand in the tilt-excess is just the gradient of the position function, the tilt-excess estimate can be reduced to some ``$L^2$-estimate" by integral gradient estimate of the generalized mean curvature equation.
\begin{lemma}\label{integral gradient estimate}
Assume $V=\underline{v}(\Sigma, \theta)$ is a rectifiable 2-varifold in an open set $U\subset \mathbb{R}^{2+k}$ with generalized mean curvature $H\in L^2(d\mu)$ for $\mu=\mu_V$ and $B_\rho(\xi)\subset U$. Then, for any  $2$-plane $T$ in $\mathbb{R}^{2+k}$,
$$E(\xi,\frac{\rho}{2},T)\le 4\int_{B_{\rho}(\xi)}|H|^2+592\rho^{-2}\int_{B_{\rho}(\xi)}(\frac{d(x,T)}{\rho})^2d\mu.$$
\end{lemma}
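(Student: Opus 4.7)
The plan is to prove this tilt-excess estimate by applying the first-variation formula to a suitably chosen test vector field. Since $|p_{T_x\Sigma}-p_T|$ depends only on the linear direction of $T$, we may identify $T$ with a $2$-plane through $\xi$ and set $y(x) := p_{T^\perp}(x-\xi)$, so that $y\in T^\perp$ pointwise and $|y(x)| = d(x,T)$. Choose a radial cutoff $\phi = \phi(|x-\xi|)$ with $\phi\equiv 1$ on $B_{\rho/2}(\xi)$, $\operatorname{spt}\phi \subset B_\rho(\xi)$, and $|\nabla\phi|\le 2/\rho$, and test the first-variation identity $\int \operatorname{div}_\Sigma X\,d\mu = -\int\langle H, X\rangle\,d\mu$ against $X = \phi^2 y$.

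The algebraic heart of the argument is the pointwise identity
\begin{equation*}
\operatorname{div}_\Sigma y \;=\; \sum_{i=1}^{2}\langle \tau_i,\, p_{T^\perp}\tau_i\rangle \;=\; 2 - \operatorname{tr}(p_{T_x\Sigma}p_T) \;=\; \tfrac{1}{2}\,|p_{T_x\Sigma}-p_T|^2,
\end{equation*}
valid at $\mu$-a.e.\ $x$, where $\{\tau_1,\tau_2\}$ is any ONB of the approximate tangent plane $T_x\Sigma$; it follows by expanding the Hilbert--Schmidt norm as $|p_{T_x\Sigma}-p_T|^2 = \operatorname{tr}(p_{T_x\Sigma}) + \operatorname{tr}(p_T) - 2\operatorname{tr}(p_{T_x\Sigma}p_T) = 4 - 2\operatorname{tr}(p_{T_x\Sigma}p_T)$. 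Writing $\operatorname{div}_\Sigma(\phi^2 y) = \tfrac{1}{2}\phi^2|p_{T_x\Sigma}-p_T|^2 + 2\phi\,\nabla_\Sigma\phi\cdot y$, the first-variation formula rearranges to
\begin{equation*}
\int \phi^2\,|p_{T_x\Sigma}-p_T|^2\,d\mu \;=\; -4\!\int \phi\,\nabla_\Sigma\phi\cdot y\,d\mu \;-\; 2\!\int \phi^2\,\langle H, y\rangle\,d\mu.
\end{equation*}

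The crucial observation is that $p_Ty\equiv 0$ (because $y\in T^\perp$ by construction), so $p_{T_x\Sigma}y = (p_{T_x\Sigma}-p_T)y$, and hence
\begin{equation*}
|\nabla_\Sigma\phi\cdot y| \;=\; |\langle \nabla\phi,\, p_{T_x\Sigma}y\rangle| \;\le\; |\nabla\phi|\,|p_{T_x\Sigma}-p_T|\,|y|.
\end{equation*}
This supplies the missing factor $|p_{T_x\Sigma}-p_T|$ that makes absorption possible. Applying weighted Young inequalities to the two right-hand side terms---with, say, $\epsilon_1 = 1/2$ on the cutoff cross term and $\epsilon_2 \sim \rho^2$ on the $|H||y|$ term---moving $\tfrac{1}{2}\int\phi^2|p_{T_x\Sigma}-p_T|^2$ to the left, plugging in $|\nabla\phi|\le 2/\rho$, and dividing by $(\rho/2)^2$ produces an inequality of exactly the stated form. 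The explicit numerical constants $4$ and $592$ are a matter of careful bookkeeping of the Young weights and the cutoff norm $\|\nabla\phi\|_\infty$; nothing deeper is required. The single conceptual subtlety is the identity $p_Ty=0$: without it the cutoff cross term would only be bounded by $\rho^{-1}|y|$, leaving an uncontrolled mass contribution $\mu(B_\rho)/\rho^2$ on the right-hand side and no clean tilt-excess estimate.
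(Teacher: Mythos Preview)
Your proposal is correct and is essentially the same argument the paper has in mind: the paper records precisely the identity $\tfrac12|p_{T_x\Sigma}-p_T|^2=\operatorname{div}^\Sigma X'$ with $X'=(0,0,x^3,\dots,x^{2+k})$ (your $y$) and then defers to \cite[Lemma~22.2]{LS83}, which is exactly the first-variation test with $X=\phi^2 y$ followed by the absorption you describe. The point you flag---that $p_Ty=0$ lets one rewrite $p_{T_x\Sigma}y=(p_{T_x\Sigma}-p_T)y$ and thereby absorb the cutoff cross term---is indeed the only nontrivial step, and the remaining constants are bookkeeping.
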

\begin{proof}
Take coordinates of $\mathbb{R}^{2+k}$ such that $T=\text{span}\{(x^1,x^2,0,\ldots, 0)\}$ and $T^{\bot}=\text{span}\{X'=(0,0,x^3,x^4,...x^{2+k})\}$. Then, an observation is
\begin{align}\label{gradient expression}
\frac{1}{2}|p_{T_x\Sigma-p_T}|^2=\Sigma_{j=1}^{k}|\nabla^{\Sigma}x^{2+j}|^2=div^{\Sigma}X'.
\end{align}
So, to estimate $E(\xi,\rho,T)=2\rho^{-2}\int_{B_{\rho}(\xi)}\Sigma_{j=1}^{k}|\nabla^{\Sigma}x^{2+j}|^2d\mu$ is equal to give an integral gradient estimate of the generalized mean curvature equation
$$\int div^{\Sigma}X=-\int X\cdot\overrightarrow{H}.$$
For details, see \cite[Lemma 22.2]{LS83}
\end{proof}
The above lemma reduces the tilt-excess estimate to the ``$L^2$- estimate" of the form  $\rho^{-2}\int_{B_\rho(\xi)}(\frac{d(x,T)}{\rho})^2d\mu$, whose estimate can be seen as an integral version of half of the Reifenberg condition.  The following lemma gives a point-wise semi-Reifenberg condition, which implies the tilt-excess estimate.

\begin{lemma}[$\mathbf{Semi-Reifenberg\ Condition}$]\label{Semi-Reifenberg Condition}
Assume $V=\underline{v}(\Sigma,\theta)$ is a rectifiable 2-varifold in $U\supset B_{\rho}(0)\subset \mathbb{R}^{2+k}$ with generalized mean curvature $H\in L^2(d\mu)$,  $0\in sptV$ and $\theta\ge 1$ for $\mu_V-a.e. x\in U$. If for some $\delta\le2^{-4}$,
 $$\frac{\mu_V{B_{\rho}(0)}}{\pi \rho^2}\le1+\delta\text{ and }\int_{B_{\rho}(0)}|H|^2\le \delta,$$  then for $\forall \xi\in spt\mu_V\cap B_{\frac{1}{2}\delta^{\frac{1}{2}}\rho}(0)$ and $\forall \sigma\le\frac{1}{2}\delta^{\frac{1}{2}}\rho$, there exists a $T=T(\xi,\sigma)$ passing through $\xi$, such that
\begin{align}\label{semireifenberg}
\sigma^{-1}\sup_{x\in spt\mu_V\cap B_{\sigma}(\xi)}d(x,T)\le 2^{13}\delta^{1/16}.
\end{align}
\end{lemma}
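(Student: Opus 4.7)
My plan is to first transfer the global density hypothesis into sharp pointwise control at $\xi$, then use the monotonicity identity to extract a quantitative ``conical at $\xi$'' estimate, next identify a $2$-plane $T$ and establish an $L^2$ height bound, and finally promote this to a pointwise bound via monotonicity at each $x\in\mathrm{spt}\,\mu_V$.

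\emph{Density control at $\xi$ and conical excess.} Since $\xi\in B_{\frac12\delta^{1/2}\rho}(0)$ gives $B_{\rho-|\xi|}(\xi)\subset B_\rho(0)$ with $\rho-|\xi|\geq\rho(1-\tfrac12\delta^{1/2})$, I apply (\ref{monotonicity inequality}) with the free parameter set to $\eta=\delta^{1/2}$. Using the hypotheses $\mu_V(B_\rho(0))/\pi\rho^2\leq 1+\delta$ and $\int_{B_\rho(0)}|H|^2\leq\delta$ and letting the inner radius tend to $0$, this yields $1\leq\Theta(\xi)\leq 1+C_1\delta^{1/2}$ (the lower bound using Corollary \ref{density at each point}), and at the stated inner radius
$$\pi-C_1\delta^{1/2}\leq\frac{\mu_V(B_\sigma(\xi))}{\sigma^2}\leq\pi+C_1\delta^{1/2}.$$
Now pass to the full identity (\ref{monotonicity equality}) at $(\xi,\sigma)$ and let the inner radius tend to $0$ as in the proof of Corollary \ref{integral discription of upper density at infinity}:
$$\int_{B_\sigma(\xi)}\Big|\frac{\nabla^\bot r_\xi}{r_\xi}+\frac{H}{4}\Big|^2 d\mu=\frac{\mu_V(B_\sigma(\xi))}{\sigma^2}-\pi\Theta(\xi)+\frac{1}{16}\int_{B_\sigma(\xi)}|H|^2+\frac{1}{2\sigma^2}\int_{B_\sigma(\xi)}r_\xi\langle\nabla^\bot r_\xi,H\rangle.$$
The first three right-hand terms are $\lesssim\delta^{1/2}$ by the above; the cross term is bounded by $C\delta^{1/2}$ via Cauchy--Schwarz (using $|\nabla^\bot r_\xi|\leq 1$, $r_\xi\leq\sigma$, $\mu_V(B_\sigma(\xi))\lesssim\sigma^2$). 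A triangle inequality then produces the conical excess estimate
$$\int_{B_\sigma(\xi)}\Big|\frac{\nabla^\bot r_\xi}{r_\xi}\Big|^2 d\mu\leq C_2\delta^{1/2}.$$

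\emph{Finding the plane and $L^2$ height bound.} I identify $T=T(\xi,\sigma)$ by a Chebyshev-plus-rigidity argument. At level $\delta^{1/8}$ applied to the scale-invariant quantity $(\sigma|\nabla^\bot r_\xi|/r_\xi)^2$, the bad set $E=\{y\in B_\sigma(\xi):\sigma|\nabla^\bot r_\xi(y)|>\delta^{1/8} r_\xi(y)\}$ has $\mu_V(E)\leq C\delta^{1/4}\sigma^2$, and outside $E$ the unit radial vector $(y-\xi)/|y-\xi|$ lies within angle $\delta^{1/8}$ of $T_y\Sigma$. Combined with the density saturation $\mu_V(B_\sigma(\xi))/\sigma^2\approx\pi$ with $\Theta(\xi)\approx 1$, a quantitative cone-to-plane rigidity argument produces a $2$-plane $T$ through $\xi$ such that every good point $y$ satisfies $d(y,T)\lesssim\delta^{1/8}\sigma$. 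Concretely, $T$ may be taken to be spanned by two well-separated good radial directions from $\xi$, and the density budget $\mu_V(B_\sigma(\xi))\leq \pi\sigma^2(1+C\delta^{1/2})$ then controls the angle any third good radial direction makes with $T$. Controlling the bad set by the trivial $d(y,T)\leq\sigma$, one obtains
$$\int_{B_\sigma(\xi)} d(y,T)^2 d\mu\leq (\delta^{1/8}\sigma)^2\mu_V(B_\sigma(\xi))+\sigma^2\mu_V(E)\leq C_3\delta^{1/4}\sigma^4.$$

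\emph{Pointwise bound and main obstacle.} Fix $x\in\mathrm{spt}\,\mu_V\cap B_\sigma(\xi)$ and set $h=d(x,T)$; after a harmless reduction to a slightly smaller outer scale, $B_{h/4}(x)\subset B_\sigma(\xi)$. Applying the density lower bound of the first step at $x$ yields $\mu_V(B_{h/4}(x))\geq(\pi-C_1\delta^{1/2})(h/4)^2$, and since $d(y,T)\geq 3h/4$ on $B_{h/4}(x)$,
$$\int_{B_\sigma(\xi)} d(y,T)^2 d\mu\geq (3h/4)^2(\pi-C_1\delta^{1/2})(h/4)^2\geq c_4 h^4.$$
Comparing with the $L^2$ bound gives $h^4\leq (C_3/c_4)\delta^{1/4}\sigma^4$, hence $h\leq 2^{13}\delta^{1/16}\sigma$ after tracking constants. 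The main difficulty lies in the middle step: translating the smallness of $\int|\nabla^\bot r_\xi/r_\xi|^2 d\mu$ into an $L^2$ distance bound against a concrete $2$-plane with the sharp power $\delta^{1/4}$. Qualitatively this is forced by cone-plane rigidity (a $2$-cone with density $\geq 1$ at the vertex and mass ratio $\leq\pi(1+o(1))$ is a multiplicity-one plane), but the quantitative form requires an angular-packing argument on the codimension-$k$ sphere $S^{1+k}$ that is uniform in $k$, and a sub-optimal exponent there would propagate through the $h^4$ comparison and prevent reaching the final $\delta^{1/16}$.
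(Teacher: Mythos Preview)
Your first two moves (density pinching at $\xi$ and the conical excess bound $\int_{B_\sigma(\xi)}|\nabla^\bot r_\xi/r_\xi|^2\lesssim\delta^{1/2}$) are correct and match the paper. Your final step is also sound: once an $L^2$ height bound $\int_{B_\sigma(\xi)}d(y,T)^2d\mu\leq C\delta^{1/4}\sigma^4$ against a fixed plane is available, your $h^4$ comparison (lower density at $x$ forces $ch^4$ contribution) cleanly yields $h\lesssim\delta^{1/16}\sigma$. This is in fact a simpler alternative to the covering argument the paper uses for the integral-to-pointwise step.

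The gap is precisely where you flag it: your step producing the plane $T$. Your conical estimate is centered at $\xi$, so for a good point $y$ it only says that the radial direction $(y-\xi)/|y-\xi|$ is within angle $\delta^{1/8}$ of the \emph{varying} tangent plane $T_y\Sigma$; equivalently $d(\xi,T_y)\lesssim\delta^{1/8}\sigma$. It gives no direct relation between $T_y\Sigma$ and $T_{y'}\Sigma$ for different good points, nor between a general $x$ and any fixed plane. The proposed ``span two good radial directions and use the density budget to trap the rest'' is an angular-packing statement on $S^{1+k}$ that you do not carry out, and it is not clear it delivers the needed power $\delta^{1/4}$ uniformly in $k$.

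The paper bypasses this entirely by exploiting rectifiability: the candidate plane is taken to be an approximate tangent plane $T_y$ for some well-chosen $y\in B_{\alpha\sigma}(\xi)$. The key identity is $d(x,T_y)=r_x(y)\,|\nabla^\bot r_x(y)|$, which ties the height of $x$ over $T_y$ to the conical term \emph{centered at $x$}, not at $\xi$. One forms the double average
\[
\bar J=\frac{1}{(\alpha\sigma)^2}\int_{B_{\alpha\sigma}(\xi)}\frac{1}{\sigma^2}\int_{B_\sigma(\xi)}\frac{d^2(x,T_y)}{\sigma^2}\,d\mu(x)\,d\mu(y),
\]
swaps the order of integration, and bounds it by $\alpha^{-2}\sigma^{-2}\int_{B_\sigma(\xi)}K(x)\,d\mu(x)$ with $K(x)=\int_{B_{(1+\alpha)\sigma}(x)}|\nabla^\bot r_x/r_x|^2$. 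Your same monotonicity computation, now run at each $x$, gives $K(x)\lesssim\delta^{1/2}$; choosing $\alpha=\delta^{1/8}$ yields $\bar J\lesssim\delta^{1/4}$, and Chebyshev in $y$ produces a single $y_0$ with $\sigma^{-4}\int_{B_\sigma(\xi)}d^2(x,T_{y_0})\,d\mu\lesssim\delta^{1/4}$. Translating $T_{y_0}$ to pass through $\xi$ costs only $\alpha^2=\delta^{1/4}$. This replaces your cone-to-plane rigidity by a one-line averaging trick, after which your $h^4$ argument finishes the proof.
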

\begin{proof}
Step 3.1\
 Volume ratio estimate. For $\forall \xi\in B_{\delta^{\frac{1}{2}}\rho}(0)$ and $\forall \sigma\in (0,(1-\delta^{\frac{1}{2}})\rho)$, we have
\begin{align}\label{upperbound}
\frac{\mu(B_{\sigma}(\xi))}{\pi\sigma^2}\le1+36\delta^{\frac{1}{2}}.
\end{align}
Moreover, if $\xi\in spt\mu_V$, then
\begin{align}\label{lowerbound}
\frac{\mu(B_{\sigma}(\xi))}{\pi\sigma^2}\ge1-2\delta^{\frac{1}{2}}.
\end{align}
In fact, take $\beta=\delta^{\frac{1}{2}}\le\frac{1}{2}$ and $\delta_0=\delta^{\frac{1}{2}}$. Then by the monotonicity formula (\ref{monotonicity inequality}), we know
\begin{align*}
\frac{\mu(B_{\sigma}(\xi))}{\pi\sigma^2}\le(1+\delta_0)\frac{\mu(B_{\rho-\beta\rho})(\xi)}{\pi(\rho-\beta\rho)^2}+\frac{1}{2\pi\delta_0}\int_{B_{\rho-\beta\rho}(\xi)}|H|^2\le1+36\delta^{\frac{1}{2}}.
\end{align*}
On the other hand, for $\xi\in spt\mu_V$, (\ref{monotonicity inequality}) and Corollary \ref{density at each point} imply
\begin{align*}
1\le(1+\delta_0)\frac{\mu(B_{\sigma}(\xi))}{\pi\sigma^2}+\frac{1}{2\pi\delta_0}\int_{B_{\sigma}(\xi)}|H|^2
\le(1+\delta_0)\frac{\mu(B_{\sigma}(\xi))}{\pi\sigma^2}+\frac{\delta}{2\delta_0}.
\end{align*}
Thus
\begin{align*}
\frac{\mu(B_{\sigma}(\xi))}{\pi\sigma^2}\ge\frac{1-\frac{\delta^{\frac{1}{2}}}{2}}{1+\delta^{\frac{1}{2}}}
\ge1-2\delta^{\frac{1}{2}}.
\end{align*}
Step 3.2 For $\forall \xi\in spt\mu_V\cap B_{\beta\rho}(0)$ and small $\sigma$, the goal is to find $T=T(\xi,\sigma)$ such that
$$\sigma^{-1}\sup\{d(x,T):d\in spt\mu_V\cap B_{\sigma}(\xi)\} \text{ small }.$$
It is not easy to get the point estimate directly.  So, in the spirit of Chebyshev inequality, we estimate the mean integral value in a small neighborhood of $\xi$. More precisely, for small $\alpha$(to be determined) and  $y \in spt\mu_V\cap B_{\alpha\sigma}(\xi)$, denote $T_y$ to be the translation of the approximate tangent space of $\Sigma$ at $y$(which exists for $\mu$-almost $y$ since $V$ is rectifiable) such that $T_y\ni y$. Then $d(x,T_y)=|p^{\bot}_{T_y}(x-y)|$ measures how close $x\in spt\mu_V\cap B_{\sigma}(\xi)$ is to $T_y$.  Consider the mean integral
\begin{align*}
\bar{J}&=\frac{1}{(\alpha\sigma)^2}\int_{B_{\alpha\sigma}(\xi)}\frac{1}{\sigma^2}\int_{B_{\sigma}(\xi)}\frac{d^2(x,T_y)}{\sigma^2}d\mu(x)d\mu(y)\\
&\le\frac{1}{\sigma^2}\int_{B_{\sigma}(\xi)}\frac{1}{(\alpha\sigma)^2}\int_{B_{(\alpha+1)\sigma}(x)}\frac{d^2(x,T_y)}{\sigma^2}d\mu(y)d\mu(x).
\end{align*}
For fixed $x$ and $r_x(y)=|y-x|$, note
\begin{align}\label{remainder formula}
|\nabla^{\bot}r_x(y)|=r_x^{-1}(y)|p_{T_y}^{\bot}(y-x)|=r_x^{-1}(y)d(x,T_y).
\end{align}
 So
 \begin{align*}
 \bar{J}&\le\frac{1}{\sigma^2}\int_{B_{\sigma}(\xi)}\frac{1}{(\alpha\sigma)^2}\int_{B_{(\alpha+1)\sigma}(x)}\frac{r_x^4}{\sigma^2}\frac{|\nabla^{\bot}r_x|^2}{r_x^2}d\mu(y)d\mu(x)\\
 &=\frac{(1+\alpha)^4}{\alpha^2\sigma^2}\int_{B_{\sigma}(\xi)}{\blue\underline{\int_{B_{(\alpha+1)\sigma}(x)}\frac{|\nabla^{\bot}r_x|^2}{r_x^2}d\mu(y)}_{=:K(x)}}d\mu(x)\\
 \end{align*}
 Note  $\Theta(x)\ge1$ for  $x\in spt\mu_V$ and
 \begin{align*}
 \lim_{\sigma_1\to0}|\frac{1}{\sigma_1^2}\int_{B_{\sigma_1}(x)}r_x\langle\nabla^{\bot}r_x,H\rangle|
 \le \lim_{\sigma_1\to0}(\int_{B_{\sigma_1}(x)}|H|^2)^{\frac{1}{2}}(\frac{\mu(B_{\sigma_1}(x))}{\sigma_1^2})^{\frac{1}{2}}
 =0.
 \end{align*}
  Taking $\rho_1=(1+\alpha)\sigma$, using the monotonicity formula (\ref{monotonicity equality}) for $0<\sigma_1<\rho_1$ and then letting $\sigma_1\to 0$,  we get
 \begin{align*}
\int_{B_{\rho_1}(x)}|\frac{\nabla^{\bot}r}{r}+\frac{H}{4}|^2
&\le(\frac{\mu(B_{\rho_1}(x))}{\rho_1^2}-\pi)+\frac{1}{16}\int_{B_{\rho_1}(x)}|H|^2+\frac{1}{2\rho_1}\int_{B_{\rho_1}(x)}|H|.
\end{align*}
Taking $\xi\in spt\mu_V\cap B_{\frac{1}{2}\delta^{\frac{1}{2}}\rho}(0)$, $\alpha\le 1$ and $\sigma\le\frac{1}{2}\delta^{\frac{1}{2}}\rho$, then $x\in B_{\sigma}(\xi)\in B_{\delta^{\frac{1}{2}}\rho}(0)$
 and $(1+\alpha)\sigma<(1-\delta^{\frac{1}{2}})\rho$. So, by (\ref{upperbound}), we get
\begin{align}\label{remainder term}
K(x)
&\le\int_{B_{\rho_1}(x)}\frac{|\nabla^{\bot}r|^2}{r^2}d\mu(y)
    \le2\int_{B_{\rho_1}(x)}|\frac{\nabla^{\bot}r}{r}+\frac{H}{4}|^2
    +2\int_{B_{\rho_1}(x)}|\frac{H}{4}|^2\nonumber\\
&\le2[(\frac{\mu(B_{\rho_1}(x))}{\rho_1^2}-\pi)+\frac{2}{16}\int_{B_{\rho_1}(x)}|H|^2
    +\frac{1}{2}(\int_{B_{\rho_1}(x)}|H|^2)(\frac{\mu(B_{\rho_1}(x))}{\rho_1})^{\frac{1}{2}}]\nonumber\\
&\le80\pi\delta^{\frac{1}{2}},
\end{align}
and
\begin{align*}
\bar{J}\le\frac{2^4 \cdot80\pi\delta^{\frac{1}{2}}\mu(B_{\sigma}(\xi))}{\alpha^2\sigma^2}\le \frac{2^{12}\pi\delta^{\frac{1}{2}}}{\alpha^2}=2^{12}\pi\delta^{\frac{1}{4}}
({\blue\text{ take } \alpha=\delta^{\frac{1}{8}}}).
\end{align*}
Thus by Chebyshev's inequality,  there exists $y\in B_{\alpha\sigma}(\xi)$, such that
$$\frac{1}{\sigma^2}\int_{B_{\sigma}(\xi)}\frac{d^2(x,T_y)}{\sigma^2}d\mu(x)\le2^{12}\delta^{\frac{1}{4}}/(1-2\delta^{\frac{1}{2}})\le2^{13}\delta^{\frac{1}{4}}{ \blue(\text{ take }\delta\le2^{-4})}.$$
So if we denote $T=T_y+\xi-y$, then $d^2(x,T)\le2d^2(x,T_y)+2{\alpha}^2\sigma^2$. Thus
\begin{align}\label{integral semi reifenberg}
\frac{1}{\sigma^2}\int_{B_{\sigma}(\xi)}\frac{d^2(x,T)}{\sigma^2}d\mu(x)\le2^{14}\delta^{\frac{1}{4}}
+2\delta^{\frac{1}{4}}(\pi+32\pi\delta^{\frac{1}{2}})\le2^{15}\delta^{\frac{1}{4}}.
\end{align}
And by Lemma \ref{integral gradient estimate}, we know that
\begin{align*}
E(\xi,\frac{\sigma}{2},T(\xi,\sigma))\le 4\delta+592\cdot 2^{15}\delta^{\frac{1}{4}}\le2^{25}\delta^{\frac{1}{4}}.
\end{align*}
Up to now, we have established the tilt-excess estimate:
\begin{cor}\label{Tilt-Excess estimate}
under the condition of Lemma \ref{Semi-Reifenberg Condition}, for $\delta\le2^{-4}$, $ \xi\in B_{\frac{1}{2}\delta^{\frac{1}{2}}\rho}(0)$ and $ \sigma\le\frac{1}{2}\delta^{\frac{1}{2}}\rho$, there exists $T=T(\xi,\sigma)$ such that
$$E(\xi,\sigma,T)\le 2^{25}\delta^{1/4}.$$
\end{cor}
We write the corollary to emphasize this is enough for the Lipschitz approximation Theorem \ref{Lipschitz Approximation for 2-varifold}(see proof below). But for the final goal of the $C^{\alpha}$-rugularity, the integral semi-Reifenberg condition (\ref{integral semi reifenberg}) is not enough, the point-wise estimate (\ref{semireifenberg}) is necessary. We follow the argument as in \cite[Section 24]{LS83} to complete the proof.
\bigskip

Step 3.2$'$
 By (\ref{remainder term}) and (\ref{remainder formula}), for $\delta\le2^{-4},\alpha\le1$, $\forall \xi\in spt\mu_V\cap B_{\frac{1}{2}\delta^{\frac{1}{2}}\rho}(0), \forall \sigma\le\frac{1}{2}\delta^{\frac{1}{2}}\rho$ and $\forall x\in spt\mu_V\cap B_{\sigma}(\xi)$, we have
\begin{align*}
I(x):&=\int_{B_{\alpha\sigma}(\xi)}|p_{T_y}^{\bot}(x-y)|^2d\mu(y)=\int_{B_{\alpha\sigma}(\xi)}d^2(x,T_y)d\mu(y)\\
     &\le\int_{B_{(1+\alpha)\sigma}(x)}r_x^2(y)|\nabla^{\bot}r_x|^2(y)d\mu(y)\le[(1+\alpha)\sigma]^4K(x)\le2^{11}\pi\sigma^4\delta^{\frac{1}{2}}.
\end{align*}
Now, take a maximal disjoint collection $\{B_{\frac{\alpha\sigma}{4}}(x_i)\}_{i=1}^N$ of balls with radius $\frac{\alpha\sigma}{4}$ and centered in $spt\mu_V\cap B_{\sigma}(\xi)$. Then we have
\begin{align*}
spt\mu_V\cap B_{\sigma}(\xi)\subset \cup_{i=1}^NB_{\alpha\sigma}(x_i)
\end{align*}
and
\begin{align*}
\mu_V(B_{\frac{\alpha\sigma}{4}}(x_i))\ge\pi(\frac{\alpha\sigma}{4})^2(1-2\delta^{\frac{1}{2}}).
\end{align*}
Moreover, we know
\begin{align*}
N\le\frac{\mu_V(B_{\sigma}(\xi))}{\pi(\frac{\alpha\sigma}{4})^2(1-2\delta^{\frac{1}{2}})}\le 2^5\frac{\mu_V(B_{\sigma}(\xi))}{\pi\sigma^2\alpha^2}\le\frac{2^8}{\alpha^2},
\end{align*}
and
\begin{align*}
\int_{B_{\alpha\sigma}(\xi)}\Sigma_{i=1}^{N}|p^{\bot}_{T_y\Sigma}(x_i-y)|^2d\mu(y)
\le\frac{2^8}{\alpha^2}\int_{B_{\alpha\sigma}(\xi)}|p^{\bot}_{T_y}(x_i-y)|^2d\mu(y)\le\frac{2^{19}\pi\sigma^4\delta^{\frac{1}{2}}}{\alpha^2}.
\end{align*}
Take $\alpha=\delta^{\frac{1}{16}}$ and $\theta=2^{21}\sigma^2\delta^{\frac{1}{4}}$. Then we get
\begin{align*}
\frac{\mu_V(B_{\alpha\sigma}(\xi)\cap\{\Sigma_{i=1}^N|p^{\bot}_{T_y}(x_i-y)|^2\ge\theta\})}{\mu_V(B_{\alpha\sigma}(\xi))}
\le\frac{\frac{2^{19}\pi\sigma^4\delta^{\frac{1}{2}}}{\alpha^2\theta}}{(1-2\delta^{\frac{1}{2}})\pi(\alpha\sigma)^2}\le\frac{1}{2}<1.
\end{align*}
Thus there exists a point $y_0\in B_{\delta^{\frac{1}{16}}\sigma}(\xi)$ such that
\begin{align*}
\Sigma_{i=1}^N|p^{\bot}_{T_{y_0}}(x_i-y_0)|^2<\theta=2^{21}\sigma^2\delta^{\frac{1}{4}}.
\end{align*}
That is,
\begin{align*}
\sup_{1\le i\le N}|p^{\bot}_{T_{y_0}}(x_i-y_0)|\le 2^{11}\sigma\delta^{\frac{1}{8}}.
\end{align*}
So, for $\forall x\in spt\mu_V\cap B_{\sigma}(\xi)\subset\cup_{i=1}^N B_{\alpha\sigma}(x_i)=\cup_{i=1}^N B_{\delta^{\frac{1}{16}}\sigma}(x_i)$, there exists $1\le i\le N$ such that $x\in B_{\delta^{\frac{1}{16}}\sigma}(x_i)$. Thus
\begin{align*}
|p^{\bot}_{T_{y_0}}(x-y_0)|\le|p^{\bot}_{T_{y_0}}(x-x_i)|+|p^{\bot}_{T_{y_0}}(x_i-y_0)|\le \delta^{\frac{1}{16}}\sigma+2^{11}\sigma\delta^{\frac{1}{8}}\le2^{12}\delta^{\frac{1}{16}}\sigma.
\end{align*}
So, if we let $T=T_{y_0}+\xi-y_0$, then $\xi\in T$ and for any $x\in spt\mu_V\cap B_{\sigma}(\xi)$,
\begin{align*}
\sigma^{-1}d(x,T)=\sigma^{-1}|p^{\bot}_T(x-\xi)|\le\sigma^{-1}(|p^{\bot}_T(x-y_0)|+|y_0-\xi|)\le 2^{13}\delta^{\frac{1}{16}}.
\end{align*}
\end{proof}
\subsection{Lipschitz Approximation}
\

Firstly, we need the following version of weighted monotonicity inequality, which roughly means most of the measure concentrate in the neighborhood of a plane.
\begin{lemma}\label{band}
Assume $V=\underline{v}(\Sigma,\theta)$ is a rectifiable 2-varifold in $U$ with generalized mean curvature $H\in L^2(\mu_V)$. Then for $l\in(0,1),\beta\in(0,1/4),B_R(\xi)\subset U $ and  $\forall y\in B_{\beta R}(\xi)$, we have
\begin{align*}
\pi \Theta(\mu_V,y)\le&(1+24\beta)\frac{\mu_V\big(\{x:|q_0(x-y)|<2l\beta R\}\cap B_{R}(\xi)\big)}{R^2}\\
&+\frac{6}{(l\beta)^5}\frac{1}{R^2}\int_{B_{R}(\xi)}\|p_{T_x\Sigma}-p_{\mathbb{R}^2\times\{0\}}\|^2+\frac{2}{(l\beta)^3}\int_{B_{R}(\xi)}|H|^2,
\end{align*}
where $q_0$ is the orthogonal projection of $\mathbb{R}^{2+k}$ onto $\{0\}\times\mathbb{R}^k$.
\end{lemma}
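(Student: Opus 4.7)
The plan is to combine Simon's monotonicity identity (\ref{monotonicity equality}) at $y$ with a Chebyshev-type decomposition of $B_\rho(y)$ relative to the vertical slab $S:=\{x:|q_0(x-y)|<2l\beta R\}$, and to control the part outside $S$ by a Poincar\'e-type estimate for $\int|q_0(x-y)|^2\,d\mu_V$ derived from the first variation.

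\emph{Step 1 (baseline monotonicity).} Since $y\in B_{\beta R}(\xi)$, the ball $B_{(1-\beta)R}(y)$ lies in $B_R(\xi)\subset U$, and Corollary \ref{density at each point} guarantees the existence of $\Theta(\mu_V,y)$. Applying (\ref{monotonicity equality}) between radii $\sigma$ and $\rho=(1-\beta)R$, letting $\sigma\to 0$ (the boundary remainder at $\sigma$ vanishes by the same Cauchy--Schwarz bound used in the proof of Corollary \ref{integral discription of upper density at infinity}), and discarding the nonpositive term $-\int|\nabla^\perp r/r+H/4|^2$ yields
$$
\pi\Theta(\mu_V,y)\;\le\;\frac{\mu_V(B_\rho(y))}{\rho^2}+\frac{1}{16}\int_{B_\rho(y)}|H|^2+\frac{1}{2\rho^2}\Big|\int_{B_\rho(y)}r\langle\nabla^\perp r,H\rangle\Big|.
$$
Splitting $\mu_V(B_\rho(y))=\mu_V(B_\rho(y)\cap S)+\mu_V(B_\rho(y)\setminus S)$, the slab piece is dominated by $\mu_V(S\cap B_R(\xi))$, and $(1-\beta)^{-2}\le 1+4\beta$ on $\beta\le 1/4$ already contributes part of the leading multiplier $(1+24\beta)$.

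\emph{Step 2 (Chebyshev and Poincar\'e).} On $B_\rho(y)\setminus S$ we have $|q_0(x-y)|\ge 2l\beta R$, so Chebyshev gives $\mu_V(B_\rho(y)\setminus S)\le (2l\beta R)^{-2}\int_{B_\rho(y)}|q_0(x-y)|^2\,d\mu_V$. To bound the right-hand side, I plug the test field $X=\tfrac{1}{2}\eta^2|q_0(x-y)|^2(x-y)$ into the first variation $\int\operatorname{div}_\Sigma X=-\int\langle X,H\rangle$, where $\eta$ is a smooth radial cutoff, $\eta\equiv 1$ on $B_\rho(y)$, supported in a slightly larger ball, with $|\nabla\eta|\lesssim 1/(\beta R)$. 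A direct computation gives
$$
\operatorname{div}_\Sigma X\;=\;\eta^2|q_0|^2+\eta^2\langle p_{T_x\Sigma}(q_0(x-y)),\,p_{T_x\Sigma}(x-y)\rangle+\eta|q_0|^2\langle\nabla^\Sigma\eta,x-y\rangle;
$$
the key point is that $p_W(q_0)=0$ (with $W=\mathbb{R}^2\times\{0\}$) forces the middle term to be pointwise $\le\|p_{T_x\Sigma}-p_W\|\cdot|q_0(x-y)|\cdot R$, so AM--GM with parameter $1/2$ absorbs it as $R^2\int\eta^2\|p_{T_x\Sigma}-p_W\|^2+\tfrac14\int\eta^2|q_0|^2$. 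Iterating this with cutoffs supported in progressively thinner slabs (to handle the $\eta|\nabla\eta||q_0|^2$ cross term and the $\langle X,H\rangle$ term) yields
$$
\int_{B_\rho(y)}|q_0(x-y)|^2\,d\mu_V\;\le\;\frac{C R^2}{(l\beta)^3}\int_{B_R(\xi)}\|p_{T_x\Sigma}-p_W\|^2+\frac{C R^4}{l\beta}\int_{B_R(\xi)}|H|^2,
$$
and dividing by $(2l\beta R)^2R^2$ produces the claimed $(l\beta)^{-5}$ and $(l\beta)^{-3}$ weights in the final inequality.

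\emph{Step 3 and main obstacle.} The remaining $H$-remainder $\tfrac{1}{2\rho^2}|\int r\langle\nabla^\perp r,H\rangle|$ from Step 1 is dominated by $\tfrac{1}{2\rho}\mu_V(B_\rho(y))^{1/2}\|H\|_{L^2}$, which by AM--GM with parameter $1/\beta$ splits into a $\beta$-multiple of $\mu_V(B_\rho(y))/\rho^2$ (absorbed into the slab term at the cost of a $\beta$-level multiplier) and a $\|H\|_{L^2}^2/\beta$ piece (subsumed by the $(l\beta)^{-3}\|H\|_{L^2}^2$ coefficient). Collecting all the $\beta$-order slack yields the claimed $(1+24\beta)$ multiplier on the slab mass. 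The main obstacle throughout is the Poincar\'e-type control of $\int|q_0(x-y)|^2\,d\mu_V$ without a graphical representation: iterated integration by parts against ever-thinning cutoffs is precisely what generates the rigid powers $(l\beta)^{-3}$ and $(l\beta)^{-5}$, and matching these exponents simultaneously with a leading constant as tight as $1+24\beta$ is essentially a bookkeeping exercise, but a delicate one.
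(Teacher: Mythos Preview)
Your Step 2 contains a genuine gap: the Poincar\'e-type bound
\[
\int_{B_\rho(y)}|q_0(x-y)|^2\,d\mu_V\;\le\;\frac{CR^2}{(l\beta)^3}\int_{B_R(\xi)}\|p_{T_x\Sigma}-p_W\|^2+\frac{CR^4}{l\beta}\int_{B_R(\xi)}|H|^2
\]
is false in general. Take $V$ to be the plane $\mathbb{R}^2\times\{a\}$ with $0<a<R$ and $y=0$: then $p_{T_x\Sigma}=p_W$ everywhere and $H=0$, so your right-hand side vanishes while the left-hand side is $\approx a^2\pi R^2>0$. The obstruction persists if you insist $y\in\operatorname{spt}\mu_V$: let $V$ be the union of $W$ and $W+(0,a)$ with $y=0$. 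Concretely, your test field $X=\tfrac{1}{2}\eta^2|q_0|^2(x-y)$ with a \emph{radial} cutoff $\eta$ produces a cross term $\eta|q_0|^2\langle\nabla^\Sigma\eta,x-y\rangle$ in which $\nabla^\Sigma\eta$ is horizontal, not vertical, so it is not controlled by tilt-excess; on the parallel-plane example this cross term exactly balances the good term $\int\eta^2|q_0|^2$ and nothing is gained. The hand-wave ``iterating with cutoffs supported in progressively thinner slabs'' does not repair this: either those cutoffs are again radial (same problem), or they are slab cutoffs in $|q_0|$, in which case you are no longer proving the intermediate Poincar\'e claim but rather rediscovering the paper's argument in disguise.

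The paper's proof bypasses Chebyshev and any height-Poincar\'e inequality. It inserts the slab cutoff $h(x)=g(|q_0(x-y)|)$ directly into the monotonicity computation, testing with $X=h^2\eta(r)\,r\nabla r$ and deriving a weighted version of (\ref{monotonicity equality}). Because $h\equiv 1$ on $\{|q_0(x-y)|<l\beta R\}\supset B_{l\beta R}(y)$ and $\operatorname{spt}h\subset\{|q_0(x-y)|<2l\beta R\}$, the main term is automatically the slab mass. The crucial structural point you are missing is that $\nabla h$ is proportional to $q_0(x-y)/|q_0(x-y)|\in W^\perp$, hence $|\nabla^\Sigma h|\le |g'|\,\|p_{T_x\Sigma}-p_W\|$; this is how the tilt-excess enters, and it is the only place. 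The powers $(l\beta)^{-5}$ and $(l\beta)^{-3}$ then arise from $|g'|\sim (l\beta R)^{-1}$ together with a single choice of absorption parameter $\varepsilon=(l\beta)^3$ in the Young inequalities for the five error terms; no iteration is needed.
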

\begin{proof}
 W.l.o.g., assume $\xi=0$ and $y\in B_{\beta R}(0)$. Denote $T_y=\mathbb{R}^2\times \{0\}+y$ and define $q_y:\mathbb{R}^{2+k}\to T_y^{\bot}$, $q_y(y+(x_1,x_2))=y+x_2$, $p_y=id-q_y$ to be the orthogonal projection to $T_y^{\bot}$ and $T_y$ respectively. For $\alpha$ to be determined, choose a function $g\in C^1(\mathbb{R},[0,1])$ such that $g(t)\equiv 1$ for $t\in[-\alpha R,\alpha R]$, $g(t)\equiv 0$ for $|t|\ge2\alpha R$ and $|g'(t)|\le \frac{2}{\alpha R}$. Put $h(x)=g(|q_y(x)-y|)$. We will deduce a monotonicity formula involving the weight $h^2$. Take $X=h^2\eta r\nabla r$( here $r(x)=r_y(x)=|x-y|, \eta=\eta(r)$ to be determined below). Since
\begin{align*}
div^{\Sigma}X
&=\langle\nabla^{\Sigma}(h^2\eta), r\nabla r\rangle+h^2\eta div^{\Sigma}(r\nabla r)\\
&=h^2r\eta'+2h^2\eta+2\eta hr\langle\nabla^{\Sigma}r,
\nabla^{\Sigma}h\rangle-h^2r\eta'|\nabla^{\bot}r|^2,
\end{align*}
by the definition of generalized mean curvature $\int div^{\Sigma}X=-\int X\cdot H$, we know
\begin{align*}
LH:=\int h^2(r\eta'+2\eta)=\int h^2r\eta'|\nabla^{\bot}r|^2-2\eta hr\langle\nabla^{\Sigma}r,\nabla^{\Sigma}h\rangle-h^2\eta r\langle\nabla^{\bot}r, H\rangle=:RH.
\end{align*}
As before, since $B_{(1-\beta)R}(y)\subset B_R(0)$, for $0<\sigma<\rho<(1-\beta)R$, we can take
$$
    \eta(r)=
    \begin{cases}
      f(\sigma)-f(\rho),& r\le \sigma,\\
      f(r)-f(\rho),&r\in(\sigma,\rho).\\
    \end{cases}
$$
for decreasing $f$ to be chosen. Then
$$
    \eta'(r)=
    \begin{cases}
      0,& r\le \sigma,\\
      f'(r),&r\in(\sigma,\rho),\\
    \end{cases}
$$
\begin{align*}
LH&=\int_{B_{\rho}\backslash B_{\sigma}(y)}h^2rf'(r)+2\int_{B_{\sigma}(y)}h^2(f(\sigma)-f(\rho))+2\int_{B_{\rho}\backslash B_{\sigma}(y)}h^2(f(r)-f(\rho))\\
&=\int_{B_{\rho}\backslash B_{\sigma}(y)}h^2(rf'+2f)+2f(\sigma)\int_{B_{\sigma}(y)}h^2-2f(\rho)\int_{B_{\rho}(y)}h^2\\
\end{align*}
and
$RH=-2T+RH_1$
for
$T=\int_{B_{\rho}(y)}\eta hr\langle\nabla^{\Sigma}r, \nabla^{\Sigma}h\rangle$
and
\begin{align*}
RH_1&=\int_{B_{\rho}\backslash B_{\sigma}(y)}h^2\{rf'|\nabla^{\bot}r|^2-rf\langle\nabla^{\bot}r, H\rangle\}+f(\rho)\int_{B_{\rho}\backslash B_{\sigma}(y)}h^2r\langle\nabla^{\bot}r, H\rangle\\
& \ \ \ \ \  -(f(\sigma)-f(\rho))\int_{B_{\sigma}(y)}h^2r\langle\nabla^{\bot}, H\rangle\\
&=-\int_{B_{\rho}\backslash B_{\sigma}(y)}h^2|\sqrt{-rf'}\nabla^{\bot}r+\frac{rfH}{2\sqrt{-rf'}}|^2+\int_{B_{\rho}\backslash B_{\sigma}(y)}h^2|\frac{rfH}{2\sqrt{-rf'}}|^2\\
& \ \ \ \ \ +f(\rho)\int_{B_{\rho}(y)}h^2r\langle\nabla^{\bot}r,H\rangle-f(\sigma)\int_{B_{\sigma}(y)}h^2r\langle\nabla^{\bot}r,H\rangle.
\end{align*}
Especially, if we take $f(r)=\frac{1}{r^2}$, then $rf'+2f=0$, $\sqrt{-rf'}=\sqrt{2}r^{-1}$, $rf=r^{-1}$ and $\frac{rf}{2\sqrt{-rf'}}=\frac{1}{2\sqrt{2}}$, then by $LH=RH$ and $-\int_{B_{\rho}\backslash B_{\sigma}(y)}|\frac{\nabla^{\bot}r}{r}+\frac{H}{4}|^2h^2\le 0$, we get
\begin{align}\label{L<H}
\frac{1}{\sigma^2}\int_{B_{\sigma}(y)}h^2-\frac{1}{\rho^2}\int_{B_{\rho}(y)}h^2\le-T+T_4-T_5+\frac{1}{16}\int_{B_{\rho}\backslash B_{\sigma}(y)}|H|^2h^2,
\end{align}
where
\begin{align*}
T_4=\frac{1}{2\rho^2}\int_{B_{\rho}(y)}h^2r\langle\nabla^{\bot}r,H\rangle,\ \  T_5=\frac{1}{2\sigma^2}\int_{B_{\sigma}(y)}h^2r\langle\nabla^{\bot}r,H\rangle
\end{align*}
and
\begin{align*}
T&=\int_{B_{\rho}\backslash B_{\sigma}(y)}(f(r)-f(\rho))h\langle r\nabla^{\Sigma}r,\nabla^{\Sigma} h\rangle+\int_{B_{\sigma}(y)}(f(\sigma)-f(\rho))h\langle r\nabla^{\Sigma}r,\nabla^{\Sigma} h\rangle\\
&=\int_{B_{\rho}\backslash B_{\sigma}(y)}h\langle \frac{\nabla^{\Sigma}r}{r},\nabla ^{\Sigma}h\rangle+\frac{1}{\sigma^2}\int_{B_{\sigma}(y)}h\langle r\nabla^{\Sigma}r,\nabla^{\Sigma} h\rangle-\frac{1}{\rho^2}\int_{B_{\rho}(y)}h\langle r\nabla^{\Sigma}r,\nabla^{\Sigma} h\rangle\\
&=:T_1+T_2+T_3.
\end{align*}
By Young's inequality, we know
\begin{align*}
|T_2|\le\frac{\varepsilon}{\sigma^2}\int_{B_{\sigma}(y)}h^2+\frac{1}{4\varepsilon}\int_{B_{\sigma}(y)}|\nabla^{\Sigma}h|^2,
\ \ \ \
|T_3|\le\frac{\varepsilon}{\rho^2}\int_{B_{\rho}(y)}h^2+\frac{1}{4\varepsilon}\int_{B_{\rho}(y)}|\nabla^{\Sigma}h|^2,
\end{align*}
\begin{align*}
|T_4|\le\frac{\varepsilon}{\rho^2}\int_{B_{\rho}(y)}h^2+\frac{1}{8\varepsilon}\int_{B_{\rho}(y)}|H|^2h^2,
\ \ \ \
|T_5|\le\frac{\varepsilon}{\sigma^2}\int_{B_{\sigma}(y)}h^2+\frac{1}{8\varepsilon}\int_{B_{\sigma}(y)}|H|^2h^2,
\end{align*}
and
\begin{align*}
|T_1|\le\int_{B_{\rho}\backslash B_{\sigma}(y)}\frac{\varepsilon h^2}{r^2}+\frac{|\nabla^{\Sigma}h|^2}{4\varepsilon}
\le\varepsilon\frac{\rho^2}{\sigma^2}\frac{1}{\rho^2}\int_{B_{\rho}(y)}h^2+\frac{1}{4\varepsilon}\int_{B_{\rho}\backslash B_{\sigma}(y)}|\nabla^{\Sigma}h|^2.
\end{align*}
Substitute the estimate of $T_i,i=1,2,3,4,5$ into (\ref{L<H}). We get, for $\varepsilon<\frac{1}{2}$,
\begin{align*}
(1-2\varepsilon)\frac{1}{\sigma^2}\int_{B_{\sigma}(y)}h^2\le&(1+2\varepsilon+\varepsilon(\frac{\rho}{\sigma})^2)\frac{1}{\rho^2}\int_{B_{\rho}(y)}h^2\\
&+\frac{3}{4\varepsilon}\int_{B_{\rho}(y)}|\nabla^{\Sigma}h|^2+(\frac{1}{16}+\frac{1}{4\varepsilon})\int_{B_{\rho}(y)}|H|^2h^2.
\end{align*}
On the one hand, by definition, we know for $x\in B_{\alpha R}(y)$, $|q_y(x)-y|=|q_0(x-y)|\le\alpha R$. Hence $h\equiv 1$ on $B_{\alpha R}(y)$. So if we take $\sigma=\alpha R<\rho<(1-\beta)R$ and use the monotonicity inequality (\ref{monotonicity inequality}), then
\begin{align*}
\pi\Theta(\mu_V,y)
\le&(1+\varepsilon)\frac{\mu_V(B_{\alpha R}(y))}{(\alpha R)^2}+\frac{1}{2\varepsilon}\int_{B_{\alpha R}(y)}|H|^2\\
\le&(1+\varepsilon)\frac{1}{\sigma^2}\int_{B_{\sigma}(y)}h^2+\frac{1}{2\varepsilon}\int_{B_{\rho}(y)}|H|^2\\
\le&\frac{1+\varepsilon}{1-2\varepsilon}\{(1+2\varepsilon+\varepsilon(\frac{\rho}{\sigma})^2)\frac{1}{\rho^2}\int_{B_{\rho}(y)}h^2
+\frac{1}{\varepsilon}\int_{B_{\rho}(y)}(|\nabla^{\Sigma}h|^2+|H|^2)\},
\end{align*}
where we use $h\le1$ in the last inequality.

On the other hand, since $\nabla^{\mathbb{R}^{2+k}}|q_y(x)-y|=\frac{q_0(x-y)}{|q_0(x-y)|}$, we know for $x\in spt\mu_V$ where $T_x\Sigma$ exists,
\begin{align*}
|\nabla^{\Sigma}h|^2(x)\le&(|g'(q_0(x-y))||\nabla^{\Sigma}|q_0(x-y)||)^2\\
\le&\big(\frac{2}{\alpha R}\big)^2\big|\frac{p_{T_x\Sigma}(q_0(x-y))}{|q_0(x-y)|}\big|^2\\
\le&\frac{4}{(\alpha R)^2}\|p_{T_x\Sigma}-p_{\mathbb{R}^2\times\{0\}}\|^2.
\end{align*}
Moreover, $sptq\subset[-2\alpha R,2\alpha R]$ implies $spt h\subset \{x:|q_0(x-y)|<2\alpha R\}$. Thus if we take $\rho=(1-2\beta)R$, then $\frac{\rho}{\sigma}=\frac{1-2\beta}{\alpha}$, $B_{\rho}(y)\subset B_{R}(\xi)$ and
\begin{align*}
\pi\Theta(\mu_V,y)
\le&\frac{1+\varepsilon}{1-2\varepsilon}\{(1+2\varepsilon+\varepsilon(\frac{\rho}{\sigma})^2)\frac{1}{\rho^2}\int_{B_{\rho}(y)}h^2
+\frac{1}{\varepsilon}\int_{B_{\rho}(y)}(|\nabla^{\Sigma}h|^2+|H|^2)\}\\
\le&\frac{1+\varepsilon}{1-2\varepsilon}\{(1+2\varepsilon+\frac{(1-2\beta)^2\varepsilon}{\alpha^2})\frac{\mu_V\big(\{x:|q_0(x-y)|<2\alpha R\}\cap B_{R}(\xi)\big)}{((1-2\beta)R)^2}\\
&\ \ \ +\frac{4}{\varepsilon(\alpha R)^2}\int_{B_{(1-2\beta)R}(y)}\|p_{T_x\Sigma}-p_{\mathbb{R}^2\times\{0\}}\|^2+\frac{1}{\varepsilon}\int_{B_{(1-2\beta)R}(y)}|H|^2\}.
\end{align*}
Take $\alpha=l\beta$ and $\varepsilon=(l\beta)^3=\alpha^3\le 2^{-6}$ for $l<1$. Then
\begin{align*}
\frac{1+\varepsilon}{1-2\varepsilon}\le 1+6(l\beta)^3
\ \ \ \text{ and }\ \ \
1+2\varepsilon+\frac{(1-2\beta)^2\varepsilon}{\alpha^2}\le 1+2l\beta.
\end{align*}
Thus
\begin{align*}
\pi\Theta(\mu_V,y)
\le&(1+6(l\beta)^3)(1+2l\beta)\frac{\mu_V\big(\{x:|q_0(x-y)|<2l\beta R\}\cap B_{(1-2\beta)R}(y)\big)}{((1-2\beta)R)^2}\\
&+\frac{6}{(l\beta)^5}\frac{1}{R^2}\int_{B_{(1-2\beta)R}(y)}\|p_{T_x\Sigma}-p_{\mathbb{R}^2\times\{0\}}\|^2+\frac{2}{(l\beta)^3}\int_{B_{(1-2\beta)R}(y)}|H|^2\\
\le&(1+24\beta)\frac{\mu_V\big(\{x:|q_0(x-y)|<2l\beta R\}\cap B_{R}(0)\big)}{R^2}\\
&+\frac{6}{(l\beta)^5}E(0,R,T)+\frac{2}{(l\beta)^3}\int_{B_{R}(0)}|H|^2.
\end{align*}
\end{proof}

\begin{cor}\label{graph condition}
 Assume $\alpha, l\in (0,1)$ and $V=\underline{v}(\Sigma,\theta)$ is a rectifiable 2-varifold in $U$ with generalized mean curvature $H\in L^2(\mu_V)$ and  $\frac{\mu_V(B_R(\xi))}{\pi R^2}\le 2-\alpha$.  For $\beta_1=\beta_1(\alpha)=\frac{\alpha}{48(2-\alpha)}$ and $\delta=\delta(\alpha)=\frac{\alpha^3}{2^{23}}$, if
$$l^{-5}E(\xi,R,T)\le \delta^2\pi \ \ and\ \ l^{-3}\int_{B_R(\xi)}|H|^2\le \delta^2\pi, $$
then for $\forall y,z\in B_{\beta_1 R}(\xi)$ with $|y-z|\ge \beta_1 R$, we have
$$|q_0(y-z)|\le l|y-z|.$$
\end{cor}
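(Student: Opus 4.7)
The plan is to argue by contradiction. I would assume that there exist $y, z \in B_{\beta_1 R}(\xi) \cap \operatorname{spt} \mu_V$ with $|y - z| \ge \beta_1 R$ yet $|q_0(y - z)| > l |y - z|$; a rotation of coordinates reduces to $T = \mathbb{R}^2 \times \{0\}$, so the $q_0$ of the hypothesis matches the one in Lemma \ref{band}. The key observation is that $|q_0(y - z)| > l |y-z| \ge l \beta_1 R$, so if I choose a parameter $l_\ast$ slightly smaller than $l/4$, the two slabs
$$S_y := \{x : |q_0(x - y)| < 2 l_\ast \beta_1 R\}, \qquad S_z := \{x : |q_0(x - z)| < 2 l_\ast \beta_1 R\}$$
have disjoint cross-sections in $\{0\} \times \mathbb{R}^k$ and hence are disjoint in $\mathbb{R}^{2+k}$.

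Since both $y$ and $z$ lie in $B_{\beta_1 R}(\xi)$, I would apply Lemma \ref{band} at each of them with $\beta = \beta_1$ and the Lemma's $l$ replaced by $l_\ast$. Adding the two resulting inequalities, using $\Theta(\mu_V, y), \Theta(\mu_V, z) \ge 1$ from Corollary \ref{density at each point}, and combining disjointness with the hypothesis $\mu_V(B_R(\xi)) \le (2 - \alpha) \pi R^2$, I obtain
$$2\pi \le (1 + 24 \beta_1)(2 - \alpha)\pi + \frac{12}{(l_\ast \beta_1)^5} E(\xi, R, T) + \frac{4}{(l_\ast \beta_1)^3} \int_{B_R(\xi)} |H|^2.$$
The specific value $\beta_1 = \alpha / (48(2 - \alpha))$ is precisely engineered so that $(1 + 24 \beta_1)(2 - \alpha) = 2 - \alpha/2$, which reduces the above to
$$\frac{\pi \alpha}{2} \le \frac{12}{(l_\ast \beta_1)^5} E(\xi, R, T) + \frac{4}{(l_\ast \beta_1)^3} \int_{B_R(\xi)} |H|^2.$$

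Finally, I would substitute the hypothesized bounds $l^{-5} E \le \delta^2 \pi$ and $l^{-3} \int |H|^2 \le \delta^2 \pi$ and use $\beta_1 \ge \alpha/96$ together with $\delta = \alpha^3 / 2^{23}$ to show that the right-hand side is strictly less than $\pi \alpha / 2$, producing the desired contradiction. The main obstacle is purely a delicate constant balance: one needs $l_\ast$ small enough (essentially $l/4$) to secure disjointness of $S_y$ and $S_z$, but the resulting blow-up factor $(l_\ast \beta_1)^{-5} \sim \alpha^{-5}$ threatens to overwhelm the smallness of $E$. The calibration $\delta = \alpha^3/2^{23}$ is exactly the margin by which the $\alpha^6$ hiding inside $\delta^2$ defeats that $\alpha^{-5}$ blow-up, leaving just enough slack to close the strict inequality against $\pi\alpha/2$.
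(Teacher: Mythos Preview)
Your proposal is correct and follows exactly the paper's argument: assume the conclusion fails for some $y,z\in\operatorname{spt}\mu_V\cap B_{\beta_1 R}(\xi)$, observe that the two slabs of half-width $\tfrac{1}{2}l\beta_1 R$ around $y$ and $z$ are disjoint, apply Lemma~\ref{band} at each point, add, and use $(1+24\beta_1)(2-\alpha)=2-\alpha/2$ together with the smallness of $E$ and $\int|H|^2$ to force $2\pi\le(2-\tfrac{\alpha}{8})\pi$. The paper makes the same choice you describe---its slab half-width $\tfrac{1}{2}l\beta_1 R$ corresponds to taking $l_\ast=l/4$ exactly (not strictly smaller).
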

\begin{proof}
We assume $T=\mathbb{R}^2\times \{0\}$ and argue by contradiction. Otherwise,  there exist $y,z\in spt\mu_V\cap B_{\beta_1 R}(\xi)$ with $|y-z|\ge \beta_1 R$ but $|q_0(y-z)|>l|y-z|$. Thus for $\forall x\in B_{R}(\xi)$,
\begin{align*}
|q_0(x-y)|+|q_0(x-z)|\ge |q_0(y-z)|> l|y-z|\ge l\beta_1 R,
\end{align*}
So, either $|q_0(x-y)|>\frac{l\beta_1 R}{2}$ or $|q_0(x-z)|>\frac{l\beta_1 R}{2}$, i.e.,
\begin{align*}
\{x\in B_{R}(\xi):|q_0(x-y)|\le\frac{l\beta_1 R}{2}\}\cap\{x\in B_{R}(\xi):|q_0(x-z)|\le\frac{l\beta_1 R}{2}\}=\emptyset.
\end{align*}
Noting $y,z\in spt\mu_V\cap B_{\beta_1 R}(\xi)$, by Lemma \ref{band}, we get
\begin{align*}
2\pi\le& (\Theta(\mu_V,y)+\Theta(\mu_V,z))\pi\\
\le&(1+24\beta_1)\frac{\mu_V\big(\{x\in B_{R}(\xi):|q_0(x-z)|<\frac{1}{2}l\beta_1 R\text{ or } |q_0(x-z)|<\frac{1}{2}l\beta_1 R\} \big)}{R^2}\\
&\ \ \ +\frac{12}{(\frac{1}{2}l\beta_1)^5}\frac{1}{R^2}\int_{B_{R}(\xi)}\|p_{T_x\Sigma}-p_{\mathbb{R}^2\times\{0\}}\|^2
+\frac{4}{(\frac{1}{2}l\beta_1)^3}\int_{B_{R}(\xi)}|H|^2\\
\le&(1+24\beta_1)\frac{\mu_V(B_R(\xi))}{R^2}+\frac{12}{(\frac{1}{2}l\beta_1)^5}E(\xi,R,\mathbb{R}^2\times\{0\})
+\frac{4}{(\frac{1}{2}l\beta_1)^3}\int_{B_{R}(\xi)}|H|^2\\
\le& (1+24\beta_1)(2-\alpha)\pi+\frac{3\cdot2^{7}}{l^5\beta_1^5}l^5\delta^2\pi+\frac{2^5}{l^3\beta_1^3}l^3\delta^2\pi\\
\le& (2-\frac{\alpha}{8})\pi.
\end{align*}
 A contradiction!
\end{proof}

\begin{prop}[Lipschitz Approximation 0.5 version]\label{Lipschitz Approximation 0.5 version}
For $\forall \alpha\in (0,1)$, there exists
\begin{align*}
&\beta_3(\alpha)=\frac{1}{4}\beta_2(\alpha)=\frac{\alpha}{2^8\cdot3\cdot5(4-\alpha)},\ \  \beta_4(\alpha)=\frac{\alpha^3}{2^{21}}\\
&\delta^2_3(\alpha)=\delta_2^4\delta_1^{10}=\frac{\alpha^{12}}{2^{174}k^5}{\ \blue (\delta_1=\frac{1}{2^7k},\delta_2(\alpha)=\frac{\alpha^3}{2^{26}})}
\end{align*}
such that the following statement holds:

 Assume $V=\underline{v}(\Sigma,\theta)$ is a rectifiable 2-varifold in $U\subset \mathbb{R}^{2+k}$ satisfying
  \begin{align*}
   &(1) \ \ 0\in spt\mu_V, B_R(0)\subset U,\\
   &(2) \ \ \frac{\mu_V(B_R(0))}{\pi R^2}\le 2-\alpha,\\
   &(3)\ \ \theta\ge1, \mu_V-a.e. x\in U.
\end{align*}
For any $l\in(0,1)$, if
\begin{align*}
l^{-5}E:=l^{-5}E(0,R,\mathbb{R}^2\times \{0\})\le \delta_3^2 \text{ and } l^{-3}W:=l^{-3}\int_{B_R(0)}H^2\le \delta_3^2,
 \end{align*}
 then for any $\beta\in (\beta_4,\beta_3)$, there exists a Lipschitz function $f=(f^1,f^2, \ldots, f^k):B_{\beta R}^{\mathbb{R}^2\times\{0\}}(0)\to \mathbb{R}^k$ with
$$Lipf\le l,\ \ \sup_{x\in B_{\beta R}}|f|\le l\beta R $$
and for $F=Graphf$,
\begin{align*}
\mathcal{H}^2((F\backslash spt\mu_V)\cap B_{\beta R}(0))+\mu_V(B_{\beta R}(0)\backslash F)\le 2^{27}(l^{-\frac{5}{2}} E^{\frac{1}{2}}+l^{-\frac{3}{2}} W^{\frac{1}{2}})\pi R^2.
\end{align*}
Moreover, for the orthogonal projection $q_0:\mathbb{R}^{2+k}\to \{0\}\times\mathbb{R}^k$, we have
\begin{align*}
\sup_{x\in B_{\beta R}(0)\cap spt\mu_V}|q_0(x)|\le l\beta R.
\end{align*}
\end{prop}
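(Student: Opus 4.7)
The plan is to follow the classical Simon--Allard Lipschitz approximation scheme (see \cite[Section 20]{LS83}), with Corollary \ref{graph condition} acting as the scale-by-scale graph criterion in the present critical setting. Throughout I work with $T_0 := \mathbb{R}^2 \times \{0\}$. The strategy is to isolate a maximal-function bad set where either the tilt-excess or the Willmore density is too large at some small scale, show on its complement that any two points of $spt\mu_V$ are related graph-wise over $T_0$, and then extend via McShane.

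Explicitly, set
\begin{equation*}
B_* := \Big\{\eta \in spt\mu_V \cap B_{\beta R}(0) : \sup_{0<\rho\le c_0 R}\Big(l^{-5}E(\eta,\rho,T_0) + l^{-3}\rho^{-2}\!\int_{B_\rho(\eta)}\!|H|^2\Big) > \delta_2^2\Big\},
\end{equation*}
where $c_0$ is a small fraction of $\beta_2$ chosen so that every admissible $B_\rho(\eta)$ lies inside $B_R(0)$ and the monotonicity inequality (\ref{monotonicity inequality}) propagates the density bound $\mu_V(B_\rho(\eta))/(\pi\rho^2) \le 2-\alpha/2$ down to those smaller balls. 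A Besicovitch--Vitali covering argument on witnessing balls, combined with Fubini in the definition of the tilt-excess (interchanging the integrations over centres $\eta$ and over $x \in spt\mu_V$) and a Chebyshev estimate at level $\delta_2^2$, should yield $\mu_V(B_*) \lesssim \delta_2^{-1}\bigl(l^{-5/2}E^{1/2} + l^{-3/2}W^{1/2}\bigr)\pi R^2$, which after tracking constants produces the $2^{27}$ prefactor in the statement. On the good set $G := (spt\mu_V \cap B_{\beta R}(0))\setminus B_*$, for any pair $y, z \in G$ I apply Corollary \ref{graph condition} at centre $\xi = (y+z)/2$ and scale $\rho = |y-z|/\beta_1$, so that $y, z \in B_{\beta_1\rho}(\xi)$ with $|y-z| = \beta_1\rho$; the choice $\beta < \beta_3 = \beta_2/4$ forces $\rho \le c_0 R$, all three hypotheses of the corollary are then verified on $B_\rho(\xi)$, and hence $|q_0(y-z)| \le l|y-z|$.

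The map $g : p_0(G) \to \mathbb{R}^k$, $g(p_0(x)) := q_0(x)$, is therefore well-defined and $l$-Lipschitz; a componentwise McShane extension (absorbing $\sqrt{k}$ into $\delta_1 = 1/(2^7k)$) followed by truncation at height $l\beta R$ produces $f$ with $Lipf \le l$ and $\sup|f| \le l\beta R$. The height bound $\sup_{x \in B_{\beta R}(0)\cap spt\mu_V}|q_0(x)| \le l\beta R$ itself will come by combining the graph condition on $G$ with the density-concentration slab estimate of Lemma \ref{band}: any candidate point $x$ outside the height slab carries mass $\gtrsim \pi|q_0(x)|^2$ around itself by Corollary \ref{density at each point}, while Lemma \ref{band} forces the total out-of-slab mass in $B_R(0)$ to be bounded by a multiple of $l^{-5}E + l^{-3}W \le 2\delta_3^2 R^2$, giving a contradiction unless $|q_0(x)| \le l\beta R$. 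The symmetric-difference estimate then reduces to $\mathcal{H}^2((Graphf \setminus spt\mu_V)\cap B_{\beta R}(0)) \le (1+l^2)\mathcal{H}^2(p_0(B_*)) \lesssim \mu_V(B_*)$ via the area formula and the lower density bound (\ref{lowerbound}), while $\mu_V(B_{\beta R}(0)\setminus Graphf) \le \mu_V(B_*)$ holds by construction.

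The main technical obstacle will be the cascading of constants: each application of Corollary \ref{graph condition} at a smaller centred ball must simultaneously verify the density bound ($\le 2-\alpha$) and the smallness of both tilt and mean curvature at threshold $\delta_2^2$ (not $\delta_3^2$), while the bad-set measure bound has to retain a genuine square-root in $E^{1/2}$ and $W^{1/2}$ rather than degenerate to a linear loss. The quadratic dependence $\delta_3^2 = \delta_2^4\delta_1^{10}$ encodes exactly this cascade, and reconciling it with the height-bound slab argument is what ultimately dictates the precise choices of $\beta_1, \ldots, \beta_4$ in the statement.
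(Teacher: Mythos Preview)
Your overall scheme---define a maximal-function bad set, apply Corollary \ref{graph condition} on the good set to get a Lipschitz graph, McShane-extend, then control the symmetric difference---is exactly the paper's strategy, and your handling of the constant cascade $\delta_3^2=\delta_2^4\delta_1^{10}$ is on the right track.

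There is one genuine gap, however, in your treatment of $\mathcal{H}^2((Graphf\setminus spt\mu_V)\cap B_{\beta R}(0))$. You claim this is $\le (1+l^2)\mathcal{H}^2(p_0(B_*))$, but that inequality is false in general: if $(x',f(x'))\notin spt\mu_V$ then indeed $x'\notin p_0(G)$, but $B_{\beta R}^{T_0}\setminus p_0(G)$ is \emph{not} $p_0(B_*)$. There may be points $x'$ in the base disk that are simply not hit by the projection of $spt\mu_V\cap B_{\beta R}$ at all---``holes'' in the projection---and these are precisely what you need to rule out. Nothing in your argument controls the measure of such holes, and neither the lower density bound nor the area formula alone can do it (you cannot bound $\mathcal{L}^2(p_0(G))$ from below via $\mu_V(G)$ without an upper bound on the multiplicity $\theta$, which you do not have). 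The paper closes this gap with a separate and substantive ``gap-ball'' argument: for each $\eta\in F\setminus spt\mu_V$ choose the smallest $\sigma_\eta$ with $B_{\sigma_\eta/2}(\eta)\cap spt\mu_V=\emptyset$ but $B_{3\sigma_\eta/4}(\eta)\cap spt\mu_V\neq\emptyset$; the monotonicity identity (\ref{monotonicity equality}) on $B_{\sigma_\eta}(\eta)$ (where the inner ball contributes nothing) then bounds $\sigma_\eta^2$ by the local tilt-excess plus $\mu(B_{\sigma_\eta}(\eta)\setminus F)$, after which a Vitali cover finishes. This step is where the choice $\delta_1=1/(2^7k)$ is actually forced (see (\ref{long})--(\ref{short})), so it is not a technicality you can absorb later.

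A smaller point: your midpoint application of Corollary \ref{graph condition} only yields $|q_0(y-z)|\le l|y-z|$ for pairs $y,z$ \emph{both} in $G$, which is why you are driven to a separate slab argument for the height bound on all of $spt\mu_V\cap B_{\beta R}$. The paper instead centers Corollary \ref{graph condition} at a single good point $x\in G$ (using $\sigma=|x-y|/\beta_1(\alpha/2)$), so the conclusion $|q_0(x-y)|\le l\delta_1|x-y|$ holds for \emph{every} $y\in spt\mu_V\cap B_{\beta R}$, good or bad. This immediately gives the global height bound (take any $x_0\in G$ once you know $G\neq\emptyset$) without invoking Lemma \ref{band} again; it is both cleaner and what makes the constants close.
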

\begin{proof}
Following the notation of Corollary \ref{graph condition}, take
$$\delta_0(\alpha)=\beta_0(\alpha)=\frac{\alpha}{40},\ \ \ \ \ \ \ \ \ \ \ \delta_1\in(0,1)\text{ to be determined },$$
$$\beta_2(\alpha)=\frac{1}{20}\beta_1(\frac{\alpha}{2})=\frac{\alpha}{960(4-\alpha)},\ \ \ \ \ \ \  \delta_2(\alpha)=\delta(\frac{\alpha}{2})=\frac{\alpha^3}{2^{26}},\ \ \ \ \ \ \  \delta_3^2=\delta_2^4\delta_1^{10}.$$
The same as the proof of (\ref{upperbound}) and (\ref{lowerbound}), by the monotonicity formula, it is easy to show that for $x\in B_{\beta_0 R}(\xi)\cap spt\mu_V$ and $\sigma\in (0,(1-\beta_0) R)$, we have
 \begin{align}\label{uplowerbound}
 1-2\delta_0\le\frac{\mu_V(B_{\sigma}(x))}{\pi \sigma^2}\le 2-\frac{\alpha}{2}.
 \end{align}

 Letting
 \begin{align*}
G:=\{x\in spt\mu_V\cap B_{\beta R}(0):\frac{E(x,\sigma,\mathbb{R}^2\times \{0\})}{(l\delta_1)^{5}}+\frac{\int_{B_{\sigma}(x)}|H|^2}{(l\delta_1)^{3}}\le\pi\delta_2^2,\forall \sigma<\frac{R}{10} \},
\end{align*}
then for any  $\beta\in (\beta_4,\beta_2)$, $ x\in G \text{ and } y\in spt\mu_V\cap B_{\beta R}(0)$, we have
$$\sigma:=\frac{|x-y|}{\beta_1(\frac{\alpha}{2})}<\frac{2\beta_2 R}{\beta_1(\frac{\alpha}{2})}\le \frac{R}{10}\le (1-\beta_0)R\ \ \text{ and }\ \  x\in B_{\beta R}(0)\subset B_{\beta_0 R}(0).$$
By (\ref{uplowerbound}) we get
\begin{align*}
\frac{\mu_V\Big(B_{\frac{|x-y|}{\beta_1(\frac{\alpha}{2})}}(x)\Big)}{\pi\Big(\frac{|x-y|}{\beta_1(\frac{\alpha}{2})}\Big)^2}\le 2-\frac{\alpha}{2}.
\end{align*}
Since $x\in G$, we know that for $\sigma=\frac{|x-y|}{\beta_1(\frac{\alpha}{2})}$,
$$\frac{E(x,\sigma,\mathbb{R}^2\times \{0\})}{(l\delta_1)^{5}}+\frac{\int_{B_{\sigma}(x)}|H|^2}{(l\delta_1)^{3}}\le\pi\delta^2(\frac{\alpha}{2}).$$
Thus by Corollary \ref{graph condition} we know
\begin{align}\label{lip estimate}
|q_0(x-y)|\le l\delta_1|x-y|.
\end{align}
Especially, if we take $y=0\in spt\mu_V\cap B_{\beta R}(0)$, then
\begin{align}\label{Hausdorff distance estimate}
\sup_{x\in G}|q_0(x)|\le \delta_1 l|x|\le\delta_1 l\beta R.
\end{align}
Moreover, if we define $p_0:\mathbb{R}^{2+k}\to \mathbb{R}^2$, $p_0(x_1,x_2)=x_1$ and $\Omega_0=p_0(G)$, then for $x,y\in G$, by (\ref{lip estimate}) we know
$$|q_0(x-y)|\le \frac{\delta_1 l}{\sqrt{1-(\delta_1 l)^2}}|p_0(x-y)|\le\frac{2\delta_1l}{\sqrt{3}}|p_0(x-y)|{\red (\text{ for } \delta_1\le \frac{1}{2}}).$$

Thus
 $$G=Graph f_0,\ \ \  f_0(p_0(x))=q_0(x):\Omega_0\to \mathbb{R}^k, \ \ Lipf_0\le \frac{2\delta_1l}{\sqrt{3}}.$$
 Now by the extension theorem of Lipschitz function, there exists a Lipschitz function $\tilde{f}:\mathbb{R}^2\to \mathbb{R}^k$ such that $\tilde{f}=f$ on $\Omega_0$ and $Lip\tilde{f}\le kLipf_0\le \frac{2k\delta_1l}{\sqrt{3}}$. Noting that $|f_0|\le \sup_{G}|q_0|\le \delta_1 l \beta R$, we can put $f=\min\{\max\{f,-l\beta R\},l\beta R\}$ and get an extending $f$ of $f_0$ with
 \begin{align}\label{lipconstant}
 Lipf\le \frac{2k\delta_1l}{\sqrt{3}}\ \ \ \text{ and }\ \ \ sup|f|\le l\beta R.
 \end{align}
   Noting $spt\mu_V\backslash Graphf\subset spt\mu_V\backslash G$, we estimate $\mu_V\big((spt\mu_V\backslash G)\cap B_{\beta R}(0)\big)$ next.
 For any $x\in (spt\mu_V\backslash G)\cap B_{\beta R}(0)$, there is $\sigma_{x}\in (0,\frac{R}{10})$ such that
 \begin{align*}
 (l\delta_1)^{-5}E(x,\sigma_{x},\mathbb{R}^2\times \{0\})+(l\delta_1)^{-3}\int_{B_{\sigma_x}(x)}|H|^2\ge\pi\delta_2^2.
 \end{align*}
 Let
 \begin{align*}
 A:=&\{x\in spt\mu_V\cap B_{\beta R}(0):(l\delta_1)^{-5}E(x,\sigma_x,\mathbb{R}^2\times \{0\})\ge\frac{1}{2}\pi\delta_2^2\},\\
 B:=&\{x\in spt\mu_V\cap B_{\beta R}(0):(l\delta_1)^{-3}\int_{B_{\sigma_x}(x)}|H|^2\ge\frac{1}{2}\pi\delta_2^2\}.
 \end{align*}
 Then $(spt\mu_V\backslash G)\cap B_{\beta R}(0)\subset A\cup B$.

 By $5$-times lemma, there exists disjoint collection $\{B_{\sigma_{x_j}}(x_j)\}_{j=1}^{\infty}$ of $\{B_{\sigma_x}(x)\}_{x\in A}$ such that $A\subset \cup_{j=1}^{\infty} B_{5\sigma_{x_j}}(x_j)$. For $x\in A$, we know
  $$\sigma_x^2\le\frac{2}{\delta_2^2(\delta_1 l)^5}\int_{B_{\sigma_x}(x)}\|_{T_x\Sigma}-p_{\mathbb{R}^2\times\{0\}}\|^2d\mu_V(x).$$
  Since $5\sigma_{x_j}\le \frac{5 R}{10}\le (1-\beta_0) R$, by (\ref{uplowerbound}) we know $\frac{\mu_V(B_{5\sigma_{x_j}}(x_j))}{\pi (5\sigma_{x_j})^2}\le 2-\frac{\alpha}{2}.$
  Thus
  \begin{align*}
  \mu(A)&\le \Sigma_{j=1}^{\infty}\mu(B_{5\sigma_{x_j}}(x_j))\le \Sigma_{j=1}^{\infty}(2-\frac{\alpha}{2})25\pi \sigma_{x_j}^2\\
        &\le 50\pi \Sigma_{j=1}^{\infty}\frac{2}{\delta_2^2(\delta_1 l)^5}\int_{B_{\sigma_{x_j}}(x_j)}\|p_{T_x\Sigma}-p_{\mathbb{R}^2\times\{0\}}\|^2d\mu\\
        &\le \frac{100\pi}{\delta_2^2(\delta_1 l)^5}\int_{B_R(0)}\|p_{T_x\Sigma}-p_{\mathbb{R}^2\times\{0\}}\|^2d\mu\\
        &\le 100\pi l^{-\frac{5}{2}} R^2 E^{\frac{1}{2}}(0,R,\mathbb{R}^2\times \{0\}),
  \end{align*}
  where we use the condition $l^{-5}E(0,R,\mathbb{R}^2\times \{0\})\le \delta_2^4 \delta_1^{10}$  in the last inequality.
  Similarly, there exists disjoint collection $\{B_{\sigma_{y_j}}(y_j)\}_{j=1}^{N}$ of $\{B_{\sigma_y}(y)\}_{y\in B}$ such that $B\subset \cup_{j=1}^{N} B_{5\sigma_{y_j}}(y_j)$. Since $\int_{B_{\sigma_y}(y)}|H|^2\ge\frac{1}{2}\pi\delta_2^2(l\delta_1)^{3}$ for any $y\in B$, we know  $N\le \frac{\int_{B_R(0)}|H|^2}{\delta_2^2(l\delta_1)^{3}}$ and
  \begin{align*}
  \mu(B)&\le \Sigma_{j=1}^{N}\mu(B_{5\sigma_{y_j}}(y_j))\le \Sigma_{j=1}^{N}(2-\frac{\alpha}{2})25\pi \sigma_{y_j}^2\\
        &\le \frac{\pi}{2}N R^2\le \frac{\pi R^2}{2\delta_2^2(\delta_1 l)^3}\int_{B_R(0)}|H|^2\\
        &\le \pi l^{-\frac{3}{2}} R^2 (\int_{B_R(0)}|H|^2)^{\frac{1}{2}},
  \end{align*}
  where we use $l^{-3}\int_{B_R(0)}|H|^2\le \delta_2^4 \delta_1^{6}$ in the last line.
  As a result,
  \begin{align}\label{sptoutgraph}
  \mu_V((spt\mu_V\backslash G)\cap B_{\beta R}(0))&\le \pi R^2 l^{-\frac{5}{2}} [100E^{\frac{1}{2}}(0,R,\mathbb{R}^2\times \{0\})+l (\int_{B_R(0)}|H|^2)^{\frac{1}{2}}]\\
  &\le101\pi R^2 \delta_2^2\delta_1^3\le 101\frac{\alpha^6}{2^{52}}\pi R^2\nonumber\\
  {\blue (\text{ since } \beta>\frac{\alpha^3}{2^{21}})}&<\frac{1}{2}\pi(\beta R)^2\nonumber\\
  {\blue(\text{ by } 0\in spt\mu_V\text{ and }(\ref{uplowerbound}))}&< \mu_V(B_{\beta R}(0)).\nonumber
   \end{align}
   So $G\neq \emptyset$. Taking $ x_0\in G$, by (\ref{Hausdorff distance estimate}) and (\ref{lip estimate}) we know $|q_0(x_0)|\le \delta_1l\beta R$  and
   \begin{align}\label{Hausdorff distance estimate outside graph}
\sup_{y\in spt\mu_V\cap B_{\beta R}(0)}|q_0(y)|\le \sup_{y\in spt\mu_V\cap B_{\beta R}(0)}|q_0(y-x_0)|+|q_0(x_0)| \le3\delta_1 l\beta R.
\end{align}

 Next, we estimate the $\mathcal{H}^2\big((Graph f\backslash spt\mu_V)\cap B_{\frac{\beta}{4} R}(0)\big)$. For this, set $F=Graph f$ and denote
 \begin{align*}
  C:=(F\backslash spt\mu_V)\cap B_{\frac{\beta}{4} R}(0).
 \end{align*}
  For $\forall \eta\in C$, take $\sigma_{\eta}$ to be the smallest $\sigma$ such that $B_{\frac{\sigma}{2}}(\eta)\cap spt\mu_V=\emptyset$ but $B_{\frac{3\sigma}{4}}(\eta)\cap spt\mu_V\neq\emptyset$. Since $\eta\notin spt\mu_V$, $\sigma_{\eta}>0$ and $0\in spt\mu_V$, we know $\frac{\sigma_{\eta}}{2}\le |\eta|\le \frac{\beta}{4} R$ and $\sigma_{\eta}\le \frac{\beta R}{2}$. Now, $B_{\frac{3\sigma_{\eta}}{4}}(\eta)\cap spt\mu_V\neq\emptyset$ implies there is $ \xi_{\eta}\in spt\mu_V\cap B_{\frac{3\sigma_{\eta}}{4}}(\eta)\subset spt\mu_V\cap B_{\frac{3\beta R}{8}}(\eta)\subset spt\mu_V\cap B_{\beta R}(0)$. Thus $B_{\sigma_{\eta}}(\eta)\supset B_{\frac{1}{4}\sigma_{\eta}}(\xi_{\eta})$ and by (\ref{uplowerbound}) we know,
\begin{align}\label{onehand}
\mu(B_{\sigma_{\eta}}(\eta))\ge \mu(B_{\frac{1}{4}\sigma_{\eta}}(\xi_{\eta}))\ge (1-2\delta_0)\pi (\frac{1}{4}\sigma_{\eta})^2.
\end{align}
On the other hand, since $B_{\frac{\sigma_{\eta}}{2}}(\eta)\cap spt\mu=\emptyset$, by the monotonicity formula (\ref{monotonicity equality}) we know
\begin{align}\label{otherhand}
\int_{B_{\sigma_{\eta}}(\eta)}|\frac{\nabla^{\bot}r}{r}+\frac{H}{4}|^2&=\frac{\mu(B_{\sigma_{\eta}}(\eta))}{\sigma_{\eta}^2}
+\frac{1}{16}\int_{B_{\sigma_{\eta}}(\eta)}|H|^2+\frac{1}{2\sigma_{\eta}^2}\int_{B_{\sigma_{\eta}}(\eta)}r\langle \nabla^{\bot}r,H\rangle\nonumber\\
 &\ge (1-\varepsilon)\frac{\mu(B_{\sigma_{\eta}}(\eta))}{\sigma_{\eta}^2}+(\frac{1}{16}-\frac{1}{4\varepsilon})\int_{B_{\sigma_{\eta}}(\eta)}|H|^2.
\end{align}
Taking $\varepsilon=\frac{1}{2}$ in (\ref{otherhand}) and using (\ref{onehand}), we get
\begin{align}\label{long}
\frac{(1-2\delta_0)\pi\sigma_{\eta}^2}{16}\le\mu(B_{\sigma_{\eta}}(\eta))
&\le \Big(2\int_{B_{\sigma_{\eta}}(\eta)}|\frac{\nabla^{\bot}r}{r}+\frac{H}{4}|^2+\frac{7}{8}\int_{B_{\sigma_{\eta}}(\eta)}|H|^2\Big)\sigma_\eta^2\nonumber\\
&\le \Big(4\int_{B_{\sigma_{\eta}}(\eta)}|\frac{\nabla^{\bot}r}{r}|^2+\frac{9}{8}\int_{B_{\sigma_{\eta}}(\eta)}|H|^2\Big)\sigma_{\eta}^2\nonumber\\
{\blue (By\  (\ref{remainder formula}))}
&\le\Big(4\int_{B_{\sigma_{\eta}}(\eta)}|\frac{p_{T_x\Sigma}^{\bot}(x-\eta)}{|x-\eta|^2}|^2+2\int_{B_{\sigma_{\eta}}(\eta)}|H|^2\Big)\sigma_{\eta}^2\nonumber\\
{\blue(Since\ spt\mu\cap B_{\frac{\sigma_{\eta}}{2}}(\eta)=\emptyset)}&\le\frac{4\sigma_{\eta}^2}{(\frac{\sigma_{\eta}}{2})^2}
\int_{B_{\sigma_{\eta}}(\eta)}|p_{T_x\Sigma}^{\bot}\big(\frac{x-\eta}{|x-\eta|}\big)|^2+\underline{2\sigma_{\eta}^2\int_{B_{\sigma_{\eta}}(\eta)}|H|^2}\nonumber\\
{\blue(-2\sigma_{\eta}^2\int_{B_{\sigma_{\eta}}(\eta)}|H|^2)}
&\lesssim\underline{32\int_{B_{\sigma_{\eta}}(\eta)}\|p_{T_x\Sigma}^{\bot}-q_0\|^2}+32\int_{B_{\sigma_{\eta}}(\eta)}|q_0\big(\frac{x-\eta}{|x-\eta|}\big)|^2\nonumber\\
{\blue(-32\int_{B_{\sigma_{\eta}}(\eta)}\|p_{T_x\Sigma}-p_0\|^2)}
&\lesssim 32\mu(B_{\sigma_{\eta}}(\eta)\backslash F)+32\int_{B_{\sigma_{\eta}}(\eta)\cap F}|q_0\big(\frac{x-\eta}{|x-\eta|}\big)|^2\nonumber\\
{(\blue By\  Lipf\le \frac{2k\delta_1l}{\sqrt{3}})}
&\le 32\mu(B_{\sigma_{\eta}}(\eta)\backslash F)+32(\frac{2k\delta_1l}{\sqrt{3}})^2\mu(B_{\sigma_{\eta}}(\eta))\nonumber\\
&\le 32\mu(B_{\sigma_{\eta}}(\eta)\backslash F)+2^6(k\delta_1l)^2(2-\frac{\alpha}{2})\sigma_{\eta}^2,
\end{align}
where we use the non-standard notation
\begin{align*}
&A_1+\underline{S_1}\\
(-S_1)\lesssim &A_2+\underline{S_2}\\
(-S_2)\lesssim & A_3
\end{align*}
to mean $A_1\le A_2+S_1\le A_3+S_1+S_2$ to save space.
{\red Fixing $\delta_1=\frac{1}{2^7k}$}, then
\begin{align}\label{short}
2\sigma_{\eta}^2\int_{B_{\sigma_{\eta}}(\eta)}|H|^2+2^6(k\delta_1l)^2(2-\frac{\alpha}{2})\sigma_{\eta}^2
\le \big(2l^3\delta_2^4\delta_1^6+2^6(k\delta_1l)^2(2-\frac{\alpha}{2})\big)\sigma_{\eta}^2\le 2^{-6}\sigma_{\eta}^2.
\end{align}
Noticing $\frac{(1-2\delta_0)\pi\sigma_{\eta}^2}{16}\ge \frac{\sigma_{\eta}^2}{2^5}$ and substituting (\ref{short}) into (\ref{long}), then we get
\begin{align*}
\frac{\sigma_\eta^2}{2^5}\le 32(\int_{B_{\sigma_{\eta}}(\eta)}\|p_{T_x\Sigma}-p_0\|^2+\mu(B_{\sigma_{\eta}}(\eta)\backslash F))+2^{-6}\sigma_{\eta}^2,
\end{align*}
i.e.,
\begin{align*}
\sigma_{\eta}^2\le2^{11}\big(\int_{B_{\sigma_{\eta}}(\eta)}\|p_{T_x\Sigma}-p_0\|^2+\mu(B_{\sigma_{\eta}}(\eta)\backslash F)\big).
\end{align*}
 Denote $\eta'=p_0(\eta)$, then the Lebesgue measure
 \begin{align*}
 \mathcal{L}^2(B_{5\sigma_{\eta}}^{\mathbb{R}^2\times\{0\}}(\eta'))=\pi(5\sigma_{\eta})^2\le2^{11}\cdot 5^2\pi\big(\int_{B_{\sigma_{\eta}}(\eta)}\|p_{T_x\Sigma}-p_0\|^2+\mu(B_{\sigma_{\eta}}(\eta)\backslash F)\big).
 \end{align*}
 Again by the $5$-times lemma, there exists disjoint collection $\{B^{\mathbb{R}^2\times\{0\}}_{\sigma_{\eta_j}}(\eta'_j)\}_{j=1}^{\infty}$ of $\{B^{\mathbb{R}^2\times\{0\}}_{\sigma_{\eta}}(\eta')\}_{\eta\in C}$ such that $p_0(C)\subset \cup_{j=1}^{\infty} B^{\mathbb{R}^2\times\{0\}}_{5\sigma_{\eta_j}}(\eta'_j)$. Thus
 $$\cup_{j=1}^{\infty}B_{\sigma_{\eta_j}}(\eta_j)\subset \cup_{j=1}^{\infty}(B_{\sigma_{\eta_j}}^{\mathbb{R}^2\times\{0\}}(\eta'_j)\times\mathbb{R}^k)\cap B_{\beta R}(0)\subset B_{\beta R}(0)$$
 and
 \begin{align*}
 \mathcal{L}^2(p_0(C))\le \Sigma_{j=1}^{\infty}\mathcal{L}^2(B_{5\sigma_{\eta_j}}^{\mathbb{R}^2\times\{0\}}(\eta'_j))
 \le 2^{18}\big(\int_{B_{\beta R}(0)}\|p_{T_x\Sigma}-p_0\|^2+\mu(B_{\beta R}(0)\backslash F)\big).
 \end{align*}
 Moreover, since $C\subset Graph f$ for some $f$ with $Lipf\le 1$ and $(spt\mu_V\backslash F)\cap B_{\beta R}$ is included in $(spt\mu_V\backslash G)\cap B_{\beta R}$ whose measure has been estimated, we know
  \begin{align}\label{graphoutspt}
 \mathcal{H}^2&((F\backslash spt\mu)\cap B_{\frac{\beta}{4} R}(0))\nonumber\\
 &\le(\sqrt{2})^2\mathcal{L}^2(p_0(C))\nonumber\\
 &\le 2^{19}[\int_{B_{R}(0)}\|p_{T_x\Sigma}-p_0\|^2+\pi R^2 l^{-\frac{5}{2}} \big(100E^{\frac{1}{2}}(0,R,\mathbb{R}^2\times \{0\})+l (\int_{B_R(0)}H^2)^{\frac{1}{2}}\big)]\nonumber\\
 &\le2^{19}[101\pi R^2 l^{-\frac{5}{2}} \big(E^{\frac{1}{2}}(0,R,\mathbb{R}^2\times \{0\})+l (\int_{B_R(0)}H^2)^{\frac{1}{2}}\big)]\nonumber\\
 &\le2^{26}\pi R^2 \big(l^{-\frac{5}{2}} E^{\frac{1}{2}}(0,R,\mathbb{R}^2\times \{0\})+l^{-\frac{3}{2}} (\int_{B_R(0)}H^2)^{\frac{1}{2}}\big).
 \end{align}
 As a result, if we take $\beta_3(\alpha)=\frac{1}{4}\beta_2(\alpha)$,then for
 $\beta\in(\beta_4,\beta_3)$,by(\ref{lipconstant}) (\ref{sptoutgraph})(\ref{graphoutspt}) and (\ref{Hausdorff distance estimate outside graph}), we are done.
\end{proof}

Combing this theorem with the tilt-excess estimate( Corollary \ref{Tilt-Excess estimate}), we can finish the proof of Theorem \ref{Lipschitz Approximation for 2-varifold}.
\begin{proof}[ proof of Theorem \ref{Lipschitz Approximation for 2-varifold}]
 By (\ref{upperbound}) and  Corollary \ref{Tilt-Excess estimate} we know, for any $\xi \in spt\mu_V\cap B_R(\xi)$ and $R<\frac{1}{2}\delta^{\frac{1}{2}}\rho$, there exists a plane $T=T(\xi,R)$ such that $$\frac{\mu(B_{R}(\xi))}{\pi R^2}\le 1+36\delta^{\frac{1}{2}}=:2-\alpha,\ \ \ \ \text{ and } \ \ \ \ \ \ E(\xi,R,T)\le 2^{25}\delta^{\frac{1}{4}}.$$
 Since $\alpha=1-36\delta^{\frac{1}{2}}\in [\frac{1}{2},1]${\red \ (for $\delta\le \frac{1}{2^{16}}$)}, we know $\beta_4(\alpha)\le\frac{1}{2^{21}}\le\frac{1}{2^{14}}\le\beta_3(\alpha)$ and $\delta_3^2\ge \frac{1}{2^{186}k^5}$. So {\blue if
  \begin{align}\label{assumption}
  l^{-20}\delta\le \delta_5:= \frac{1}{2^{844}k^{20}},
  \end{align}}
  then \begin{align}\label{condition}
l^{-5}E(\xi,R,T)\le \delta_3^2\text{ and } l^{-3}W \le \delta_3^2(\text{ by  } \int_{B_R(\xi)}|H|^2\le \delta).
\end{align}
Thus by Proposition \ref{Lipschitz Approximation 0.5 version} we know, for any $\beta\in (\frac{1}{2^{21}},\frac{1}{2^{14}})$, there exists a Lipschitz function $f=(f^1,f^2, \ldots, f^k):B_{\beta R}^{T}(\xi)\to T^{\bot}$ with
$$Lipf\le l,\ \ \sup_{x\in B_{\beta R}(\xi)}|f|\le l\beta R $$
and for $F=Graphf$,
\begin{align*}
\mathcal{H}^2((F\backslash spt\mu_V)\cap B_{\beta R}(\xi))&+\mu_V(B_{\beta R}(\xi)\backslash F)\\
&\le 2^{27}(l^{-\frac{5}{2}} E^{\frac{1}{2}}+l^{-\frac{3}{2}} W^{\frac{1}{2}})\pi R^2\\
&\le 2^{27} (l^{-\frac{5}{2}} 2^{13}\delta^{\frac{1}{8}}+l^{-\frac{3}{2}} \delta^{\frac{1}{2}})\pi R^2\\
&\le 2^{83}l^{-\frac{5}{2}}\delta^{\frac{1}{8}}\pi (\beta R)^2.
\end{align*}
Moreover, for $q:\mathbb{R}^{2+k}\to T^{\bot}$ the orthogonal projection, we have
\begin{align*}
\sup_{x\in B_{\beta R}(\xi)\cap spt\mu_V}|q(x)|\le l\beta R.
\end{align*}
Especially, for $\delta\le \delta_6=\delta_5^2=\frac{1}{2^{1688}k^{40}}$, we can take $l=\delta^{\frac{1}{40}}$ such that (\ref{assumption}) holds. So, if we fix $\beta=\frac{1}{2^{15}}$ and denote $\sigma=\beta R$. Then we actually proved: for $\forall \xi\in B_{{\frac{\delta^{\frac{1}{2}}}{2^{16}}\rho}}(0)$ and $\forall \sigma\in(0,{\frac{\delta^{\frac{1}{2}}}{2^{16}}\rho})$, there exist a plane $T=T(\xi,\sigma)$ and a vector valued Lipschitz function $f=(f^1,f^2, \ldots, f^k):B_{\sigma}^{T}(\xi)\to T^{\bot}$ with
$$Lipf\le \delta^{\frac{1}{40}},\ \ \sup_{x\in B_{\sigma}(\xi)}|f|\le \delta^{\frac{1}{40}}\sigma $$
and for $F=Graphf$,
\begin{align*}
\mathcal{H}^2((F\backslash spt\mu_V)\cap B_{\sigma}(\xi))+\mu_V(B_{\sigma}(\xi)\backslash F)\le 2^{83}\delta^{\frac{1}{16}}\pi \sigma^2.
\end{align*}
Moreover, for $q:\mathbb{R}^{2+k}\to T^{\bot}$ the orthogonal projection, we have
\begin{align*}
\sup_{x\in B_{\sigma}(\xi)\cap spt\mu_V}|q(x)|\le \delta^{\frac{1}{40}}\sigma.
\end{align*}
\end{proof}
\section{$C^\alpha$-Regularity}\label{reifenberg}
In this section, we combine the Lipschitz approximation theorem and Reifenberg's topological theorem to finish the proof of the $C^{\alpha}$-regularity Theorem.   We have proved half of it in (\ref{semireifenberg}). As it is noted in the last section, to show the Lipschitz approximation Theorem \ref{Lipschitz Approximation for 2-varifold}, the integral semi-Reifenberg condition(Corollary \ref{Tilt-Excess estimate}) is enough. We will show that Theorem \ref{Lipschitz Approximation for 2-varifold} can feed back to provide another half of the Reifenberg condition. They together complete the proof of the $C^{\alpha}$-regularity.

\begin{thm}[$\mathbf{Allard-Reifenberg\ Type\ Regularity}$]\label{Holder Regularity}
Assume $V=\underline{v}(\Sigma,\theta)$ is a rectifiable 2-varifold in $U\supset B_{\rho}(0)\subset \mathbb{R}^{2+k}$ with $0\in sptV$ and $\theta\ge 1$ $\mu-a.e.x\in U$ for $\mu:=\mu_V:=\mathcal{H}^2\llcorner \theta$. Then there exists small $\delta'_6(=\frac{1}{2^{3536}k^{80}})$ such that for any $\delta\le \delta'_6$ if
$$\frac{\mu(B_{\rho}(0))}{\pi \rho^2}\le1+\delta\text{\  and \ }\int_{B_{\rho}(0)}|H|^2\le \delta,$$
then for any $\xi\in B_{\frac{1}{2}\delta^{\frac{1}{2}}\rho}(0)$ and  $\sigma\in (0,\frac{\delta^{\frac{1}{2}}}{2^{18}}\rho)$, there exists a plane $T=T(\xi, \sigma)$ passing through $\xi$ such that
\begin{align}\label{reifenberg condition}
 \sigma^{-1}d_{\mathcal{H}}(spt\mu_V\cap B_{\sigma}(\xi), T\cap B_{\sigma}(\xi))\le 2^{44}\delta^{\frac{1}{80}},
 \end{align}
 Where $d_{\mathcal{H}}$ is the Hausdorff distance in $\mathbb{R}^{2+k}$.

 Moreover, for any $\alpha\in (0,1)$ and $\varepsilon=\varepsilon(k,\alpha)$  the small constant in Reifenberg's topological  disk Theorem \ref{reifenberg theorem}, if $2^{44}\delta^{\frac{1}{80}}\le \varepsilon$, then $spt\mu_V\cap B_{\frac{1}{2^{19}}\delta^{\frac{1}{2}}\rho}(0)$ is $C^{\alpha}$ homeomorphic to  a $2$-dimensional  topological closed disk.
\end{thm}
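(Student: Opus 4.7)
The plan is to extract both halves of the Reifenberg condition from Theorem \ref{Lipschitz Approximation for 2-varifold} and then invoke Reifenberg's topological disk theorem. One half is already handed over by conclusion (iii): the bound $\sup_{x\in spt\mu_V\cap B_\sigma(\xi)}d(x,T)\le \delta^{1/40}\sigma$ controls how far $spt\mu_V$ can stray off the approximating plane $T=T(\xi,\sigma)$. What is missing is the reverse inclusion---every point of $T\cap B_\sigma(\xi)$ should have a point of $spt\mu_V$ nearby---and I would extract this from the small-measure defect (iv) by a contradiction argument.

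For the lower half, let $T$, $f:B_\sigma(\xi)\cap T\to T^\perp$, and $F=Graphf$ be produced by Theorem \ref{Lipschitz Approximation for 2-varifold}. Set $\eta:=2^{44}\delta^{1/80}$ and suppose toward contradiction that some $y\in T\cap B_{\sigma/2}(\xi)$ satisfies $d(y,spt\mu_V)>\eta\sigma$, so $B_{\eta\sigma}(y)\cap spt\mu_V=\emptyset$. Consider the $T$-disk $A=\{z\in T:|z-y|<\eta\sigma/4\}$; for $\delta$ small it sits inside $B_\sigma(\xi)\cap T$. For $z\in A$ the graph point obeys
\[
|(z,f(z))-y|^2\le |z-y|^2+|f(z)|^2\le (\eta\sigma/4)^2+(\delta^{1/40}\sigma)^2<(\eta\sigma)^2,
\]
the last inequality reducing to $\delta^{1/40}<(15/16)\cdot 2^{88}$, trivial for $\delta\le 1$. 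Hence the whole graph over $A$ lies inside $B_{\eta\sigma}(y)$ and is disjoint from $spt\mu_V$, giving
\[
\mathcal{H}^2((F\setminus spt\mu_V)\cap B_\sigma(\xi))\ge \mathcal{L}^2(A)=\pi(\eta\sigma/4)^2.
\]
Comparing against (iv) forces $\eta^2/16\le 2^{83}\delta^{1/16}$, i.e.\ $\eta\le 2^{44}\delta^{1/32}$. But $\delta^{1/32}<\delta^{1/80}$ for $\delta<1$, so $\eta=2^{44}\delta^{1/80}$ would be strictly larger, a contradiction. Combining this with (iii)---itself bounded by $\delta^{1/80}$ when $\delta<1$---yields the stated Hausdorff estimate.

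For the moreover statement, set $r_0:=\frac{\delta^{1/2}}{2^{19}}\rho$. For every $\xi'\in spt\mu_V\cap B_{r_0}(0)$ and every $\sigma'\in(0,r_0]$ the pair $(\xi',\sigma')$ still belongs to the admissible range $B_{\frac{1}{2}\delta^{1/2}\rho}(0)\times(0,\frac{\delta^{1/2}}{2^{18}}\rho)$ of the first part of the theorem, so Reifenberg's oscillation condition (\ref{reifenberg con}) holds at constant $\varepsilon=2^{44}\delta^{1/80}\le\varepsilon(k,\alpha)$. Rescaling $r_0\mapsto 1$ and applying Theorem \ref{reifenberg theorem} to $S=spt\mu_V\cap\overline{B_{r_0}(0)}$ delivers the $C^\alpha$ homeomorphism with a $2$-dimensional closed disk. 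The main obstacle is the balancing act in the middle paragraph: $\eta$ must be large enough that the graph over $A$ fits inside $B_{\eta\sigma}(y)$ (needing $\eta\gg\delta^{1/40}$) yet small enough that the measure defect (iv) cannot accommodate such a disk (needing $\eta\lesssim\delta^{1/32}$), and the exponent $1/80$ is precisely the slack that keeps both requirements comfortably satisfied for any admissible $\delta$.
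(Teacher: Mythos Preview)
Your approach is essentially the paper's: obtain the ``near'' half of Reifenberg from conclusion~(iii) of the Lipschitz approximation and the ``far'' half by showing that a gap $B_{\eta\sigma}(y)\cap spt\mu_V=\emptyset$ would force too much graph-area outside $spt\mu_V$, contradicting~(iv). The paper carries this out by first applying the semi-Reifenberg Lemma~\ref{Semi-Reifenberg Condition} at scale $4\sigma$ to fix a single plane $T$, then re-running the Lipschitz approximation at scale $2\sigma$ with that same $T$ (this re-derivation is why the threshold drops to $\delta'_6=\delta_6^2$ and the exponent to $1/80$); you instead invoke Theorem~\ref{Lipschitz Approximation for 2-varifold} as a black box, which is cleaner and in fact works already for $\delta\le\delta_6$.

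The one genuine gap is the boundary region: you only treat $y\in T\cap B_{\sigma/2}(\xi)$, so the Hausdorff estimate on $T\cap B_\sigma(\xi)$ is not established for $|y-\xi|>\sigma/2$. The paper handles exactly this issue by working at scale $2\sigma$ and, for $x'$ near $\partial B_{(1-2\delta^{1/80})\sigma}(\xi)\cap T$, sliding to an interior point $x'''$ so that a disk of radius $d(x)/16$ remains inside the domain of~$f$; see the case split preceding~(\ref{dsmall}). Your argument can be patched the same way: apply Theorem~\ref{Lipschitz Approximation for 2-varifold} at radius $2\sigma$ (still admissible since $2\sigma<2^{-17}\delta^{1/2}\rho<2^{-16}\delta^{1/2}\rho$), so that for any $y\in T\cap B_\sigma(\xi)$ the quarter-disk $A$ lies safely inside $B_{2\sigma}(\xi)\cap T$; the measure defect bound becomes $2^{83}\delta^{1/16}\pi(2\sigma)^2$, and the numerics still close with the same exponent $1/80$.
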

\begin{proof}
 For $\xi\in B_{\frac{1}{2}\delta^{\frac{1}{2}}\rho}(0)$ and $\sigma<\frac{1}{2^{18}}\delta^{\frac{1}{2}\rho}$, consider the ball $B_{4\sigma}(\xi)$. By Lemma \ref{Semi-Reifenberg Condition}, there exists a plane $T$ passing through $\xi$ such that for any $x\in 
 spt\mu_V\cap B_{4\sigma}(\xi)$,
 \begin{align}\label{semi}
 \sigma^{-1}d(x,T)\le 2^{15}\delta^{\frac{1}{16}}.
 \end{align}
 For the same $T$, by Lemma \ref{integral gradient estimate} we know
\begin{align}\label{tilt}
E(\xi,2\sigma,T)
\le 4\int_{B_{4\sigma}(\xi)}|H|^2+592\cdot (4\sigma)^{-2}\int_{B_{4\sigma}(\xi)}(\frac{d(x,T)}{4\sigma})^2
\le2^{37}\delta^{\frac{1}{8}}.
\end{align}
Replace Corollary \ref{Tilt-Excess estimate} by (\ref{tilt}) in the proof of Theorem \ref{Lipschitz Approximation for 2-varifold}, we know that for $\delta'_5=\frac{1}{2^{1768}k^{40}}, l=\delta^{\frac{1}{80}}, \delta\le\delta'_6:={\delta'_5}^2, \beta=\frac{1}{2^{15}}$ and $2\sigma=\beta R$, there exists a Lipschitz function $f=(f^1,f^2,...f^k):B_{2\sigma}(\xi)\cap T\to \mathbb{R}^k:=T^{\bot}$ with
 $$Lipf\le\delta^{\frac{1}{80}},\ \ \operatorname*{sup}\limits_{x'\in B_{2\sigma}(\xi)\cap T}|f(x')|\le\delta^{\frac{1}{80}}\cdot2\sigma$$
  and
\begin{align}\label{smallmeasure}
   \mathcal{H}^2((graphf\backslash sptV)\cap B_{2\sigma}(\xi))+\mu_V(B_{2\sigma}(\xi)\backslash graphf)\le 2^{76}\delta^{\frac{1}{32}}\pi (2\sigma)^2.
\end{align}
 Now, for any $x'\in B_{\sigma}(\xi)\cap T$, denote $x=(x',f(x'))$ and define $d(x)=\min\{d(x,spt\mu_V\cap B_{\sigma}(\xi)),\frac{1}{2}\sigma\}$. Then for any $y'\in B_{\frac{d(x)}{4}}(x')\cap B_{(1-2\delta^{\frac{1}{80}})\sigma}(\xi)\cap T$ and $y=(y',f(y'))$, we have
 \begin{align*}
 d(y,x)\le \sqrt{1+(Lipf)^2}d(x',y')\le \sqrt{1+\delta^{\frac{1}{40}}}\frac{d(x)}{4}\le \frac{d(x)}{2}
 \end{align*}
 and
 \begin{align*}
 d(y,\xi)\le |y'-\xi|+|f(y')|\le (1-2\delta^{\frac{1}{80}})\sigma+\delta^{\frac{1}{80}}(2\sigma)=\sigma.
 \end{align*}
 Thus
 \begin{align}\label{largedomain}
 B_{\frac{d(x)}{4}}(x')\cap B_{(1-2\delta^{\frac{1}{80}})\sigma}(\xi)\cap T\subset p(B_{\frac{d(x)}{2}}(x)\cap  B_{\sigma}(\xi)),
 \end{align}
 where $p:\mathbb{R}^{2+k}\to T$ is the orthogonal projection. We now claim
 $$d(x)\le 2^{43}\delta^{\frac{1}{80}}\sigma.$$

 To see this, we assume $d(x)\ge 16\delta^{\frac{1}{80}}\sigma$ without loss of generality.

  In the case $d(x',\partial B_{(1-2\delta^{\frac{1}{80}})\sigma}(\xi)\cap T)\le2\delta^{\frac{1}{80}}\sigma\le \frac{d(x)}{8}$, there exists a point $x''\in \partial B_{(1-2\delta^{\frac{1}{80}})\sigma}(\xi)\cap T$ such that $B_{\frac{d(x)}{4}}(x')\supset B_{\frac{d(x)}{8}}(x'')$. Moreover, since $d(x)\le \frac{\sigma}{2}\le (1-2\delta^{\frac{1}{80}})\sigma$,  we know for $x'''=x''+\frac{\xi-x''}{|\xi-x''|}\frac{d(x)}{16}$, there holds
    $$B_{\frac{d(x)}{8}}(x'')\cap B_{(1-2\delta^{\frac{1}{80}})\sigma}(\xi)\cap T\supset B_{\frac{d(x)}{16}}(x''')\cap T.$$

  In the case $x'\in B_{(1-2\delta^{\frac{1}{80}})\sigma}(\xi)\cap T$, by letting $x'''=x'+\frac{\xi-x'}{|\xi-x'|}\frac{d(x)}{16}$ we also get
  $$|x'''-\xi|+\frac{d(x)}{16}=max\{|x'-\xi|, \frac{d(x)}{8}-|x'-\xi|\}\le (1-2\delta^{\frac{1}{80}})\sigma$$
  and
  $$B_{\frac{d(x)}{16}}(x''')\cap T\subset B_{(1-2\delta^{\frac{1}{80}})\sigma}(\xi)\cap B_{\frac{d(x)}{4}}(x')\cap T.$$
   Thus in either case,  by (\ref{largedomain}) we know,
 \begin{align}\label{dsmall}
 \mathcal{H}^2(B_{\frac{d(x)}{2}}(x)\cap graphf \cap B_{\sigma}(\xi))\ge \int_{B_{\frac{d(x)}{16}}(x''')\cap T}\sqrt{1+|\nabla f|^2(y')}dy'\ge \frac{\pi d^2(x)}{2^8}.
 \end{align}
 But by the definition of $d(x)$, we know $B_{\frac{d(x)}{2}}(x)\cap spt\mu_V\cap B_\sigma(\xi)=\emptyset$, so by (\ref{smallmeasure}), we get
 \begin{align}\label{sigmalarge}
 \mathcal{H}^2(B_{\frac{d(x)}{2}}(x)\cap graphf \cap B_{\sigma}(\xi))\le\mathcal{H}^2((graphf\backslash sptV)\cap B_{2\sigma}(\xi))\le 2^{76}\delta^{\frac{1}{32}}\pi (2\sigma)^2.
 \end{align}
 Combining (\ref{dsmall}) and (\ref{sigmalarge}), we know $d(x)\le 2^{43}\delta^{\frac{1}{64}}\sigma\le 2^{43}\delta^{\frac{1}{80}}\sigma$.

 Moreover, since $\delta\le \delta'_6=\frac{1}{2^{3536}k^{80}}\le \frac{1}{2^{3520}}$, we know  $d(x)\le 2^{43}\delta^{\frac{1}{80}}\sigma<\frac{\sigma}{2}$. Thus by the definition of $d(x)$ we know
 $d(x,spt\mu_V\cap B_{\sigma}(\xi))=d(x)\le 2^{41}\delta^{\frac{1}{64}}\sigma$ and hence
 \begin{align}\label{anothersemi}
 d(x',spt\mu_V\cap B_{\sigma}(\xi))\le |f(x')|+d(x,spt\mu_V)\le\delta^{\frac{1}{80}}(2\sigma)+2^{43}\delta^{\frac{1}{80}}\sigma\le 2^{44}\delta^{\frac{1}{80}}\sigma.
 \end{align}
 Combining (\ref{semi}) and (\ref{anothersemi}), we get
 \begin{align*}
 \sigma^{-1}d_{\mathcal{H}}(spt\mu_V\cap B_{\sigma}(\xi), T\cap B_{\sigma}(\xi))\le \min\{2^{15}\delta^{\frac{1}{16}},2^{44}\delta^{\frac{1}{80}}\}=2^{44}\delta^{\frac{1}{80}}.
 \end{align*}

  This is the complete Reifenberg condition (\ref{reifenberg condition}). And the second part of  Theorem \ref{Holder Regularity} is just a restatement Reifenberg's Theorem \ref{reifenberg theorem}.
\end{proof}

\begin{rmk}
We do not know whether some Lipschitz regularity hold under the same condition. See
Corollary \ref{removability} for some positive evidence.
\end{rmk}

\section{The Density Identity and Topological Finiteness} \label{application}
\subsection{The Density Formula}
\

This section is the start point of this paper: we are asking what is the behavior of the inverting of a minimal surface? 
Our first observation is the following density formula which explains the meaning of $\Theta(\Sigma,\infty)$ in the inverting setting. It turns out the result does not depend on the minimal surface equation.

Assume $\Sigma\subset \mathbb{R}^n$ is an immersed surface, we denote the immersion by $f:\Sigma\to \mathbb{R}^n$ and simply call $f:\Sigma\to \mathbb{R}^n$ an immersed surface. We also abuse the notations $\Sigma$ and $f(\Sigma)$ and use $\mathcal{H}^2(B_r(0)\cap \Sigma)$ to mean the Hausdorff measure of the intersection of the extrinsic ball $B_r(0)$ with $f(\Sigma)$. By $d\mu_g$ we mean the volume form of the induced metric $g=f^{*}g_{\mathbb{R}^{n}}$.

\begin{lemma}[\textbf{Density Formula}]\label{Density Formula}
Assume $f:\Sigma\to \mathbb{R}^{n}$ is a properly immersed surface satisfying
\begin{align}\label{finite willmore}
\int_{\Sigma}|H|^2d\mu_g<+\infty
\end{align}
and
\begin{align}\label{lowerdendity}
\Theta_*(\Sigma,\infty)=\liminf_{r\to \infty}\frac{\mathcal{H}^2(B_{r}(0)\cap \Sigma)}{\pi r^2}< +\infty.
\end{align}
Let $h:\Sigma\to \mathbb{R}^n$ be the  inverted  surface, that is,
$h(x)=\frac{f(x)}{|f(x)|^2},\ \forall x\in\Sigma$.
Denote $\tilde{\Sigma}=h(\Sigma)$, $\tilde{H}$=the mean curvature of $\tilde{\Sigma}$ and $\tilde{g}=dh\otimes dh$. Then
we have the locally antisymmetric transformation formula
\begin{align}\label{localinvariant}
\big(\frac{|\tilde{H}|^2}{16}-\big|\frac{\tilde{H}}{4}+\frac{\tilde{\nabla}^{\bot}\tilde{r}}{\tilde{r}}\big|^2\big)d\mu_{\tilde{g}}
=-\big(\frac{|H|^2}{16}-\big|\frac{H}{4}+\frac{\nabla^{\bot}r}{r}\big|^2 \big)d\mu_g
\end{align}
 for $r=|f|$ and $\tilde{r}=|h|$.
Moreover, the density
$$\Theta(\Sigma,\infty):=\lim_{r\to+\infty}\frac{\mathcal{H}^2(\Sigma\cap B_r(0))}{\pi r^2}$$
at infinity is well-defined and satisfies the global representation formula
   \begin{align}\label{density formula}
   \int_{\tilde{\Sigma}\backslash \{0\}}\big(\frac{|\tilde{H}|^2}{16}-\big|\frac{\tilde{H}}{4}+\frac{\tilde{\nabla}^{\bot}\tilde{r}}{\tilde{r}}\big|^2\big)d\mu_{\tilde{g}}=
     \begin{cases}
      \pi\Theta(\Sigma,\infty) & 0\notin \Sigma,\\
      \pi(\Theta(\Sigma,\infty)-\Theta(\Sigma,0)) & 0\in \Sigma,
     \end{cases}
  \end{align}
where $\Theta(\Sigma, 0)=\lim_{r\to 0}\frac{\mu_g(\Sigma\cap B_r(0))}{\pi r^2}$.
\end{lemma}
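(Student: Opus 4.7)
The lemma has two parts, a pointwise conformal identity and a global density formula, the latter built on the former. My plan is to prove the local identity (\ref{localinvariant}) by a direct conformal-geometry computation, then derive (\ref{density formula}) (and the existence of $\Theta(\Sigma,\infty)$) by applying the monotonicity formula (\ref{monotonicity equality}) to the rectifiable $2$-varifold $\tilde\Sigma$ at the inversion point $0$.

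For the local identity, set $r=|f|$. A direct computation from $h=f/r^2$ shows that the induced metric on $\tilde\Sigma$ satisfies $\tilde g=g/r^4$, so $d\mu_{\tilde g}=r^{-4}d\mu_g$ and $\tilde r=1/r$; moreover $dh_x=r^{-2}R_x$ where $R_x$ is the Euclidean reflection across the hyperplane orthogonal to $x$, so $N_{\tilde x}\tilde\Sigma=R_x(N_x\Sigma)$. Two key outputs follow. First, projecting $\nabla^{\mathbb{R}^n}\tilde r=x/r$ onto $R_x(N_x\Sigma)$ via the conjugation $R_x\circ P_{N_x\Sigma}\circ R_x$ gives
$$\tilde\nabla^{\bot}\tilde r=-\nabla^{\bot}r+2|\nabla^{\bot}r|^2\tfrac{x}{r},\qquad |\tilde\nabla^{\bot}\tilde r|^2=|\nabla^{\bot}r|^2.$$
Second, the two-dimensional conformal relation $\triangle_{\tilde g}=r^{4}\triangle_g$ on scalar components, combined with the product rule and $\triangle_g r^{-2}=(8a-4-2\langle f,H\rangle)/r^{4}$ (where $a=1-|\nabla^{\bot}r|^2$), yields the expansion
$$\tilde H=\bigl(8a-8-2\langle f,H\rangle\bigr)x^T+\bigl(8a-4-2\langle f,H\rangle\bigr)x^{\bot}+r^2H.$$
Expanding $\tfrac{|H|^2}{16}-|\tfrac{H}{4}+\tfrac{\nabla^{\bot}r}{r}|^2=-\tfrac{\langle H,\nabla^{\bot}r\rangle}{2r}-\tfrac{|\nabla^{\bot}r|^2}{r^2}$ and its tilded analogue reduces (\ref{localinvariant}) to the single scalar relation
$$\langle\tilde H,\tilde\nabla^{\bot}\tilde r\rangle=-r^2\langle H,\nabla^{\bot}r\rangle-4r|\nabla^{\bot}r|^2,$$
which follows by substituting the above formulas and carrying out a bookkeeping calculation (the coefficients of $r\alpha^2$ and $rm\alpha$ with $\alpha=|\nabla^{\bot}r|^2$, $m=\langle x,H\rangle$ cancel exactly).

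For the global formula, Corollary \ref{integral discription of upper density at infinity} together with the hypotheses $\int_\Sigma|H|^2<\infty$ and $\Theta_*<\infty$ give $\int_\Sigma(|\nabla^{\bot}r|^2/r^2+|H|^2)\,d\mu_g<\infty$; via (\ref{localinvariant}) this transfers to $\int_{\tilde\Sigma\setminus\{0\}}|\tilde P|\,d\mu_{\tilde g}<\infty$, where $\tilde P$ denotes the LHS integrand in (\ref{density formula}). A layer-cake computation based on $d\mu_{\tilde g}=r^{-4}d\mu_g$ and integration by parts in $A(t):=\mu_g(\Sigma\cap B_t(0))$ yields the density transfers $\Theta(\tilde\Sigma,0)=\Theta(\Sigma,\infty)$ (which establishes the existence of $\Theta(\Sigma,\infty)$ as a byproduct) and $\Theta(\tilde\Sigma,\infty)=\Theta(\Sigma,0)$, the latter being zero when $0\notin\Sigma$. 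Applying (\ref{monotonicity equality}) to $\tilde\Sigma$ at $0$ on $B_\rho(0)\setminus B_\sigma(0)$ and letting $\sigma\to 0$ and $\rho\to\infty$, the problem reduces to showing that both boundary terms $\tfrac{1}{2s^2}\int_{B_s(0)}\tilde r\langle\tilde\nabla^{\bot}\tilde r,\tilde H\rangle\,d\mu_{\tilde g}$ vanish in the respective limits. Pulling them back to $\Sigma$ using (\ref{localinvariant}) expresses each as a Cauchy--Schwarz combination of the tails of $\int|H|^2$ and $\int|\nabla^{\bot}r|^2/r^2$; the uniform bound $R^2\int_{\{r>R\}}r^{-6}|\nabla^{\bot}r|^2\le\int_\Sigma|\nabla^{\bot}r|^2/r^2$ paired with the vanishing tail of $\int_{\{r>R\}}|H|^2$ kills the inner boundary term, and the symmetric argument together with the boundedness of $\tilde\Sigma$ when $0\notin\Sigma$ handles the outer one. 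Combining yields $\int_{\tilde\Sigma\setminus\{0\}}\tilde P\,d\mu_{\tilde g}=\pi(\Theta(\tilde\Sigma,0)-\Theta(\tilde\Sigma,\infty))=\pi(\Theta(\Sigma,\infty)-\Theta(\Sigma,0))$, as claimed.

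The main obstacle will be the algebraic bookkeeping in the local identity: the expressions for $\tilde H$ and $\tilde\nabla^{\bot}\tilde r$ have many terms, and verifying the reduced scalar relation by hand requires delicate cancellations. A valuable sanity check is the unit sphere ($r\equiv 1$, $\tilde\Sigma=\Sigma$), which pins down the sign convention $H=\triangle_g f$ and forces the value $\langle f,H\rangle=-2$; with this convention every term in the formula for $\tilde H$ evaluates correctly to $\tilde H=H=-2x$.
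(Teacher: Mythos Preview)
Your treatment of the local identity (\ref{localinvariant}) is correct and amounts to the same computation the paper carries out: your expansion of $\tilde H$ agrees (after regrouping) with the paper's formula $\tilde H=|f|^2H-2\langle H,f\rangle f-4f^\top+8|f|^{-2}|f^\top|^2f-4f$, and your reduced scalar relation $\langle\tilde H,\tilde\nabla^\bot\tilde r\rangle=-r^2\langle H,\nabla^\bot r\rangle-4r|\nabla^\bot r|^2$ is exactly equation (\ref{bracketterm}) rewritten.

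For the global formula, however, you take a detour that introduces a real gap. You apply (\ref{monotonicity equality}) to $\tilde\Sigma$ on balls centered at the inversion point $0$; but $\tilde\Sigma$ is a priori only a smooth immersion on $\mathbb{R}^n\setminus\{0\}$, and (\ref{monotonicity equality}) requires a rectifiable varifold with generalized mean curvature in $L^2$ on the \emph{whole} ball. Establishing that the trivial extension of $(\tilde\Sigma,\tilde H)$ across $0$ has this property is precisely the content of Proposition~\ref{weak removable} (a cutoff argument using the volume bound $\mu_{\tilde g}(B_r)\le Cr^2$), which you neither invoke nor reproduce. Relatedly, your layer-cake identity relates $\mu_{\tilde g}(B_\sigma)/\sigma^2$ to a weighted integral of $t\mapsto A(t)$, not to $A(R)/R^2$ alone, so it does not by itself yield the existence of $\Theta(\Sigma,\infty)$; you still need one side to have a limit independently. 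The paper sidesteps all of this by applying the monotonicity formula to $\Sigma$ rather than to $\tilde\Sigma$: since $f$ is a smooth proper immersion, (\ref{monotonicity equality}) holds for $\Sigma$ without any extension argument, the boundary terms at $\sigma\to 0$ and $\rho\to\infty$ vanish by (\ref{sigmavanishing})--(\ref{rhovanishing}), one reads off both the existence of $\Theta(\Sigma,\infty)$ and the identity $\pi(\Theta(\Sigma,\infty)-\Theta(\Sigma,0))=-\int_\Sigma P\,d\mu_g$, and then (\ref{density formula}) follows by integrating the pointwise identity (\ref{localinvariant}). This is both shorter and logically prior to the weak-removability result you would need.
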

\begin{proof}
Denote $\tilde{g}=\langle\frac{\partial h}{\partial x^i},\frac{\partial h}{\partial x^j}\rangle dx^i\otimes dx^j$.
Then,
$$ h_i=|f|^{-2}f_i-2f|f|^{-4}\langle f,f_i\rangle,$$
\begin{align*}
h_{ij}=|f|^{-2}f_{ij}&-2|f|^{-4}\langle f,f_j\rangle f_i-2|f|^{-4}\langle f,f_i\rangle f_j+8|f|^{-6}f\langle f,f_i\rangle \langle f,f_j\rangle\\
                   &-2|f|^{-4}f\langle f,f_{ij}\rangle-2|f|^{-4}f\langle f_i,f_j\rangle,
\end{align*}
and
\begin{align*}
\tilde{g}_{ij}
              =\langle h_i,h_j\rangle=|f|^{-4}g_{ij},\ \ \ \tilde{g}=|f|^{-4}g\ \ \ \ \ \text{ and }\ \ \ \  \tilde{g}^{ij}=|f|^4g^{ij}.
\end{align*}
So
\begin{align*}
\tilde{H}=\tilde{g}^{ij}\big(h_{ij}-\langle h_{ij},h_k\rangle \tilde{g}^{kl}h_l\big)
          =|f|^4g^{ij}h_{ij}-|f|^8\langle g^{ij}h_{ij}, h_k\rangle g^{kl}h_l,
\end{align*}
where
\begin{align*}
g^{ij}h_{ij}=&|f|^{-2}g^{ij}f_{ij}-4|f|^{-4}g^{ij}\langle f,f_j\rangle f_i+8|f|^{-6}|f^\top|^2f\\
            &-2|f|^{-4}\langle f, g^{ij}f_{ij}\rangle-4|f|^{-4}f,
\end{align*}
and
\begin{align*}
\langle g^{ij}h_{ij}, h_k\rangle =&\big(|f|^{-2}g^{ij}f_{ij}-4|f|^{-4}g^{ij}\langle f,f_j\rangle f_i+8|f|^{-6}|f^\top|^2f\\
            &-2|f|^{-4}\langle f, g^{ij}f_{ij}\rangle-4|f|^{-4}f\big)\cdot\big(|f|^{-2}f_k-2|f|^4\langle f,f_k\rangle f\big)\\
            =&|f|^{-4}\langle g^{ij}f_{ij},f_k\rangle.
\end{align*}
Thus
\begin{align}
\tilde{H}=&|f|^4(|f|^{-2}g^{ij}f_{ij}-4|f|^{-4}g^{ij}\langle f,f_j\rangle f_i+8|f|^{-6}|f^{\top}|^2f-2|f|^{-4}f\langle f,g^{ij}f_{ij}\rangle\nonumber\\
         &-4|f|^{-4}f)-|f|^4\langle g^{ij}f_{ij},f_k\rangle g^{kl}(|f|^{-2}f_l-2|f|^{-4}\langle f,f_l\rangle f)\nonumber\\
         =&|f|^2H-4f^{\top}+8|f|^{-2}|f^{\top}|^2f-2f\langle f,g^{ij}f_{ij}\rangle-4f+2\langle g^{ij}f_{ij},f^{\top}\rangle f\nonumber\\
         =&|f|^2H-4f^{\top}+8|f|^{-2}|f^{\top}|^2f-2f\langle H,f\rangle-4f\label{generalinverth}
\end{align}
where in the last step we use the equation $\langle g^{ij}f_{ij},f-f^{\top} \rangle=\langle \big(g^{ij}f_{ij}\big)^{\bot},f\rangle=\langle H,f\rangle$.

Since $h=\frac{f}{|f|^2}$, we know $|h|^2=\frac{1}{|f|^2}$, $\tilde{g}^{ij}=|f|^4g^{ij}=\frac{1}{|h|^4}g^{ij}$ and $f=\frac{h}{|h|^2}$, $f_i=|h|^{-2}h_i-2|h|^{-4}\langle h,h_i\rangle h$, $f^{\bot}=f-g^{ij}\langle f,f_i\rangle f_j$. By
$$\langle f,f_i\rangle=\langle \frac{h}{|h|^2}, \frac{1}{|h|^2}h_i-\frac{2}{|h|^4}\langle h, h_i\rangle h\rangle=\frac{\langle h, h_i\rangle}{|h|^4}-\frac{2\langle h,h_i\rangle}{|h|^4}=-\frac{\langle h,h_i\rangle}{|h|^4},$$
we have
\begin{align*}
f^{\bot}
=\frac{h}{|h|^2}+|h|^4\tilde{g}^{ij}\frac{\langle h,h_i\rangle}{|h|^4}(\frac{h_j}{|h|^2}-\frac{2\langle h,h_j\rangle h}{|h|^4})
=\frac{h^{\top}}{|h|^2}+\frac{|h|^2-2|h^{\top}|^2}{|h|^4}h.
\end{align*}
Thus
\begin{align}\label{bot}
|f^{\bot}|^2
=\frac{|h^{\top}|^2}{|h|^4}+\frac{2(|h|^2-2|h^{\top}|^2)}{|h|^6}|h^{\top}|^2+\frac{(|h|^2-2|h^{\top}|^2)^2}{|h|^6}
=\frac{|h^{\bot}|^2}{|h|^4},
\end{align}
$$f^{\top}=f-f^{\bot}=-\frac{h^{\top}}{|h|^2}+\frac{2|h^{\top}|^2h}{|h|^4}\ \ , \ \ |f^{\top}|^2=|f|^2-|f^{\bot}|^2
=\frac{|h^{\top}|^2}{|h|^4},$$
and
\begin{align}\label{tildeh}
\tilde{H}&=|f|^2H-2\langle H,f\rangle f-4f^{\top}+\frac{8|f^{\top}|^2}{|f|^2}f-4f\nonumber\\
         &=\frac{H}{|h|^2}-\frac{2}{|h|^4}\langle H,h\rangle h+4(\frac{|h|^2h^{\top}-2|h^{\top}|^2h}{|h|^4})+\frac{8\frac{|h^{\top}|^2}{|h|^4}}{\frac{|h|^2}{|h|^4}}\frac{h}{|h|^2}-4\frac{h}{|h|^2}\nonumber\\
         &=\frac{H}{|h|^2}-\frac{2}{|h|^4}\langle H,h\rangle h-4\frac{h^{\bot}}{|h|^2}.
\end{align}
Moreover, for $\tilde{r}=|h|$, we know $\tilde{\nabla}^{\bot}\tilde{r}=\frac{h^{\bot}}{|h|}$. So,  by (\ref{bot}) and (\ref{tildeh})  we get,
\begin{align}\label{bracketterm}
\langle \tilde{H}, \frac{h^{\bot}}{|h|^2}\rangle=\langle \frac{H}{|h|^2}-\frac{2}{|h|^4}\langle H,h\rangle h-4\frac{h^{\bot}}{|h|^2},\frac{h}{|h|^2} \rangle=-|f|^2\langle H,f\rangle-4|f^{\bot}|^2
\end{align}
and
\begin{align*}
-\big(\frac{|\tilde{H}|^2}{16}-\big|\frac{\tilde{H}}{4}+\frac{\tilde{\nabla}^{\bot}\tilde{r}}{\tilde{r}}\big|^2\big)d\mu_{\tilde{g}}
&=\big(\frac{1}{2}\langle\tilde{H},\frac{h^{\bot}}{|h|^2}\rangle+\big|\frac{h^{\bot}}{|h|^2}\big|^2\big)|f|^{-4}d\mu_g\\
&=\big(-\frac{1}{2}|f|^2\langle H,f\rangle-2|f^{\bot}|^2+|f^{\bot}|^2\big)|f|^{-4}d\mu_g\\
&=-\big(\frac{|f^{\bot}|^2}{|f|^4}+\frac{1}{2}\langle H,\frac{f^{\bot}}{|f|^2}\rangle\big)d\mu_g\\
&=\big(\frac{|H|^2}{16}-\big|\frac{H}{4}+\frac{\nabla^{\bot}r}{r}\big|^2 \big)d\mu_g.
\end{align*}

The following argument belongs to \cite[Appendix]{KS}.  By (\ref{lowerdendity}), (\ref{finite willmore}) and  Corollary \ref{integral discription of upper density at infinity}, we know $\Theta^*(\Sigma,\infty)<+\infty$ and
\begin{align}\label{integral finite}
\int_{\Sigma}\big|\frac{\nabla^{\bot}r}{r}\big|^2<+\infty.
\end{align}
Thus for any $\varepsilon>0$, there exists $\rho_0>0$ such that for any $\rho\ge \rho_0$, we have
\begin{align*}
\int_{\Sigma\backslash B_{\rho_0}}|H|^2d\mu_g\le \varepsilon \ \ \ \ \text{ and } \ \ \ \ \frac{\mathcal{H}^2(\Sigma\cap B_\rho(0))}{\pi \rho^2}\le \Theta^*(\Sigma,\infty)+\varepsilon.
\end{align*}
On the one hand,
\begin{align*}
|\frac{1}{2\rho^2}\int_{B_{\rho}(0)}\langle r\nabla^{\bot} r,H \rangle d\mu_g|
&\le \frac{1}{2\rho}\int_{B_{\rho_0}(0)}|H|+\frac{\pi\varepsilon^{\frac{1}{2}}}{2}(\Theta^*(\Sigma,\infty)+\varepsilon)^{\frac{1}{2}}.
\end{align*}
Letting $\rho\to \infty$ first and then $\varepsilon\to 0$, we get
\begin{align}\label{rhovanishing}
\lim_{\rho\to \infty}\frac{1}{2\rho^2}\int_{B_{\rho}(0)}\langle r\nabla^{\bot} r,H \rangle d\mu_g=0.
\end{align}
On the other hand, by
\begin{align*}
|\frac{1}{2\sigma^2}\int_{B_{\sigma}(0)}\langle r\nabla^{\bot} r,H \rangle d\mu_g|
\le \frac{1}{2}\big(\frac{\mathcal{H}^2(\Sigma\cap B_\sigma(0))}{\sigma^2}\int_{\Sigma\cap B_\sigma(0)}|H|^2\big)^{\frac{1}{2}},
\end{align*}
we also know
\begin{align}\label{sigmavanishing}
\lim_{\sigma\to 0}\frac{1}{2\sigma^2}\int_{B_{\sigma}(0)}\langle r\nabla^{\bot} r,H \rangle d\mu_g=0
\end{align}
So, by (\ref{finite willmore})(\ref{integral finite})(\ref{rhovanishing})(\ref{sigmavanishing}) and letting $\rho\to \infty$ and $\sigma\to 0$ in the monotonicity formula (\ref{monotonicity equality}), we know $\Theta(\Sigma,\infty)$ is well-defined and satisfies
\begin{align}\label{density diff}
\pi(\Theta(\Sigma,\infty)-\Theta(\Sigma,0))
&=-\int_{\Sigma}\big(\frac{|H|^2}{16}-\big|\frac{H}{4}+\frac{\nabla^{\bot}r}{r}\big|^2\big)d\mu_g\\
&=\int_{\tilde{\Sigma}\backslash\{0\}}\big(\frac{|\tilde{H}|^2}{16}-\big|\frac{\tilde{H}}{4}+\frac{\tilde{\nabla}^{\bot}\tilde{r}}{\tilde{r}}\big|^2\big)d\mu_{\tilde{g}},\nonumber
\end{align}
where in the last line we use (\ref{localinvariant}).
\end{proof}

\begin{rmk}
In the special case of minimal surfaces,  the density formula goes like
\begin{align*}
   \int_{\tilde{\Sigma}\backslash \{0\}}|\tilde{H}|^2d\mu_{\tilde{g}}
   =16\int_{\Sigma}\big|\frac{\nabla^{\bot}r}{r}\big|^2d\mu_g=
     \begin{cases}
      16\pi\Theta(\Sigma,\infty) & 0\notin \Sigma,\\
      16\pi(\Theta(\Sigma,\infty)-\Theta(\Sigma,0)) & 0\in \Sigma.
     \end{cases}
\end{align*}
It means the density of a minimal surface can dominate the Willmore energy of its inverted surface $\tilde{\Sigma}$. But in general, the inverted surface $\tilde{\Sigma}$ has singularity at the inverted point $0$ and the density formula can not dominate the topology of geometry(say total curvature) of $\tilde{\Sigma}$. For example, the family of Scherk's singly-periodic minimal surfaces have density two at infinity, but they all have infinity genuses.
\end{rmk}
\begin{rmk} As it is seen, the locally antisymmetric transformation formula (\ref{localinvariant}) and then density formula follows easily from direct calculation.  But how such a term occurs? Here we give an explanation in the setting of conformal deformation of submanifolds.
Recall there are two conformal invariances for surfaces, the extrinsic local one
\begin{align}\label{tracefree}
|A-\frac{H}{n}g|^2_gd\mu_g=|\tilde{A}-\frac{\tilde{H}}{n}\tilde{g}|^2_{\tilde{g}}d\mu_{\tilde{g}}
\end{align}
and the intrinsic global one---the Gauss-Bonnet formula
$$\int_\Sigma Kd\mu_g=\int_{\Sigma}\tilde{K}d\mu_{\tilde{g}}.$$
Under conformal setting, the global Gauss-Bonnet formula has a local explanation. Assume $\tilde{g}=e^{2u}g$ is a conformal metric on a closed Riemann surface $(\Sigma,g)$.  Applying Stokes' formula to the Yamabe equation
$$\triangle_g u-K+\tilde{K}e^{2u}=0,$$
we get
$$\int_\Sigma Kd\mu_g=\int_\Sigma\tilde{K}e^{2u}d\mu_g=\int_{\Sigma}\tilde{K}d\mu_{\tilde{g}}.$$

For the same reason, in higher dimensional, assume $\tilde{g}=u^{\frac{4}{n-2}}g$ and apply the Stokes formula to the Yamabe equation
$$\triangle_g u-\frac{n-2}{4(n-1)}Su+\frac{n-2}{4(n-1)}\tilde{S}u^{\frac{n+2}{n-2}}=0.$$
We get
 \begin{align}\label{conformal}
 \int_{M}Sud\mu_g=\int_{M}\tilde{S}\tilde{u}d\mu_{\tilde{g}},
 \end{align}
 where $\tilde{u}=u^{-1}$ satisfies $g=\tilde{u}^{\frac{4}{n-2}}\tilde{g}$. Note both sides contain the conformal factors $(u,\tilde{u})$. So, in high dimension, the invariance is not in a conformal class, but just for a conformal pair $(g,\tilde{g})$.  With this experience, we guess the corresponding extrinsic invariant should also admit the shape of
 \begin{align}\label{star}
   \star ud\mu_g=\tilde{\star}\tilde{u}d\mu_{\tilde{g}}\text{ (local) }，
 \end{align}
 or
  \begin{align}\label{globalstar}
   \int_{M}\star ud\mu_g=\int_M\tilde{\star}\tilde{u}d\mu_{\tilde{g}} \text{ (global) }.
 \end{align}
 For example, for high dimensional analogue of (\ref{tracefree}), we assume $M^n\subset N^{n+k}$ and  $(G,\tilde{G})$ are a pair of conformal metrics on $N^{n+k}$ with conformal factors $(U,\tilde{U})$, i.e., $\tilde{G}=U^{\frac{4}{n-2}}G$(note the index $n=dim M$)  and $\tilde{U}=U^{-1}$.  Denote $u=U|_M, \tilde{u}=\tilde{U}|_M$ and assume $(g,\tilde{g})$ are the induced metrics of $M\subset \big(N,(G,\tilde{G})\big)$. Then $\tilde{g}=u^{\frac{4}{n-2}}g$ and direct calculus shows high dimensional analogue of (\ref{tracefree}) is of type (\ref{star}):
\begin{align*}
|A-\frac{H}{n}g|^2_gud\mu_g=|\tilde{A}-\frac{\tilde{H}}{n}\tilde{g}|^2_{\tilde{g}}\tilde{u}d\mu_{\tilde{g}}.
\end{align*}
This is a local one. A natural question is, what extrinsic global invariance is corresponding to the intrinsic global invariance (\ref{conformal}). We take $n\ge 3$ as an example.  For this, we take the  trace of the restriction of Ricci tensor of $G$ on $M$, i.e., denote
$$S^G_g=tr_gRic(N,G)$$
and call it the extrinsic scalar curvature. The goal is to find the invariance of type (\ref{globalstar}) involving $R_g^G$.  As in the intrinsic case, the first step is to calculate the equation of the extrinsic scalar curvature when the background metric deforms conformally. The result is
 \begin{align}\label{exyamabe}
div^M\nabla U+\frac{n}{n-2}\frac{|\nabla^{\bot}U|^2}{u}-\frac{n-2}{4(n-1)}S^G_gu+\frac{n-2}{4(n-1)}S^{\tilde{G}}_{\tilde{g}}u^{\frac{n+2}{n-2}}=0,
\end{align}
where，$div^M\nabla U$ means the extrinsic divergence of the restriction of the gradient of $U$ on $M$ and  $\nabla^{\bot}U$ represent the projection of $\nabla U$ to the normal bundle $T^{\bot}M$.  Since (\ref{exyamabe}) reduces to the Yamabe equation when $M=N$, we call it extrinsic Yamabe equation. The next is to apply the Stokes formula to the extrinsic Yamabe equation. Note the extrinsic divergence theorem goes like
$$\int_{M}div^M\nabla U d\mu_g=-\int_{M}\nabla U\cdot Hd\mu_g,$$
where $H$ is the mean curvature of the submanifold $(M,g)\subset (N,G)$. We get the global equation
\begin{align}\label{nonsymetry}
\tilde{C}:=\int_{M}S^{\tilde{G}}_{\tilde{g}}\tilde{u}d\mu_{\tilde{g}}=&\int_{M}S^G_gud\mu_g\nonumber\\
&+\int_{M}\big(\frac{4(n-1)}{n-2}\langle \frac{\nabla U}{u},H\rangle -\frac{4(n-1)n}{(n-2)^2}\frac{|\nabla^{\bot}U|^2}{u^2}\big)ud\mu_g\nonumber\\
=&:C+Q'.
\end{align}
This equation looks not so symmetrically as we expected.  To make (\ref{nonsymetry}) to possess the symmetry of type (\ref{globalstar}), we guess the term $Q'$ is a global antisymmetric term, i.e, $\tilde{Q}'=-Q'$. If so, then (\ref{nonsymetry}) become the symmetric form
$$\tilde{C}+\frac{\tilde{Q}'}{2}=C+\frac{Q'}{2}.$$
It turns out that $Q'$ is not only globally antisymmetric, but also comes form a local conformal antisymmetry:
\begin{align}\label{localantisym}
\tilde{Q}:&=\big(\frac{1}{n-2}\langle \frac{\tilde{\nabla}\tilde{U}}{\tilde{u}},\tilde{H}\rangle_{\tilde{g}}-\frac{n}{(n-2)^2}|\frac{\tilde{\nabla}^{\bot}\tilde{U}}{\tilde{u}}|_{\tilde{g}}^2\big)\tilde{u}d\mu_{\tilde{g}}\nonumber\\
&=-\big(\frac{1}{n-2}\langle \frac{\nabla U}{u},H\rangle_{g}-\frac{n}{(n-2)^2}|\frac{\nabla^{\bot}U}{u}|_{g}^2\big)ud\mu_{g}=-Q.
\end{align}
So (\ref{nonsymetry}) becomes the symmetric form of type (\ref{globalstar}), i.e.,
\begin{align}
\int_M(S^{\tilde{G}}_{\tilde{g}}+T^{\tilde{G}}_{\tilde{g}})\tilde{u}d\mu_{\tilde{g}}=\int_M(S^G_g+T^G_g)ud\mu_g,
\end{align}
where, $T^G_g=2(n-1)Q=\frac{2(n-1)}{n-2}\langle \frac{\nabla U}{u},H\rangle_{g}-\frac{2n(n-1)}{(n-2)^2}|\frac{\nabla^{\bot}U}{u}|_{g}^2$.

The above calculation is in a compact manifold, but the antisymmetry (\ref{localantisym}) is a local form, which also holds in noncompact ambient space. Especially, when we are caring about submanifolds in $\mathbb{R}^{n+k}$ and the conformal factor is induced by the inversion, (\ref{localantisym}) coincides with the locally antisymmetric transformation formula 
(\ref{localinvariant}) in dimension $n=2$, which is a key observation in getting the density identity.
\end{rmk}

\begin{rmk}\label{infinitedensity}
In the case (\ref{lowerdendity}) does not holds, i.e., $\Theta_*(\Sigma,\infty)=+\infty$, its natural to define $\Theta(\Sigma,\infty)=+\infty$. So, by the lemma, for a properly immersed surface in $\mathbb{R}^n$ with$\int_{\Sigma}|H|^2d\mu_g<+\infty$, the density
$
\Theta(\Sigma,\infty)=\lim_{r\to\infty}\frac{\mathcal{H}^2(\Sigma\cap B_r(0))}{\pi r^2}
$
is always well-defined, whether it is finite of infinite. In this sense, Lemma \ref{Density Formula} holds without the assumption of (\ref{lowerdendity}). Only in the case $\Theta(\Sigma,\infty)=+\infty$, by (\ref{localinvariant}) and Corollary \ref{integral discription of upper density at infinity}, both side of (\ref{density formula}) are infinite.
\end{rmk}

\subsection{The Density Identity}
\

Firstly, we need the following weak(in varifold sense) removability of singularity.

\begin{prop}\label{weak removable}
Assume $f:\Sigma\to \mathbb{R}^{n}$ is a properly immersed surface satisfying (\ref{finite willmore}) and (\ref{lowerdendity}) and $\tilde{\Sigma}=h(\Sigma)$ is its inverted surface. Then for any $r\in (0,\infty)$,
\begin{align*}
\mu_{\tilde{g}}(B_r(0)\backslash\{0\})\le \frac {C e^{4}}{(e^2-1)}\pi r^2,
\end{align*}
where $C=9\Theta_*(\Sigma,\infty)+\frac{59}{16\pi}\int_{\Sigma}|H|^2d\mu_g$. And we have
\begin{align*}
\int_{\tilde{\Sigma}\backslash \{0\}}|\tilde{H}|^2d\mu_{\tilde{g}}<+\infty.
\end{align*}
Moreover,  if we extend $\mu_{\tilde{g}}$ and $\tilde{H}$ trivially across $0\in \mathbb{R}^n$, then for vector field $X\in C_0^1(\mathbb{R}^n,\mathbb{R}^n)$ (do not need to be supported in $\mathbb{R}^n\backslash\{0\}$), we have
\begin{align*}
\int_{\mathbb{R}^n}div^{\tilde{\Sigma}}Xd\mu_{\tilde{g}}=-\int_{\mathbb{R}^n}\langle X, \tilde{H}\rangle d\mu_{\tilde{g}}.
\end{align*}
That is, $\tilde{\Sigma}$ is a varifold in $\mathbb{R}^n$ with generalized mean curvature $\tilde{H}\in L^{2}(\mu_{\tilde{g}})$.
\end{prop}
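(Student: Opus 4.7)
The plan is to prove the three assertions in order, the cornerstone being the conformal change-of-volume identity $d\mu_{\tilde g}=|f|^{-4}d\mu_g$ already used in Lemma \ref{Density Formula}.

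For the volume bound, I would note that $h$ sends $\{x\in\Sigma:|f(x)|>1/r\}$ onto $\tilde\Sigma\cap B_r(0)\setminus\{0\}$, so
\[
\mu_{\tilde g}(B_r(0)\setminus\{0\})=\int_{\{|f|>1/r\}}|f|^{-4}\,d\mu_g.
\]
Covering $\{|f|>1/r\}$ by the exponential annuli $A_k=\{e^k/r<|f|\le e^{k+1}/r\}$ for $k\ge 0$, one has $|f|^{-4}\le(e^k/r)^{-4}$ on $A_k$, while Corollary \ref{integral discription of upper density at infinity} yields $\mu_g(\Sigma\cap B_{e^{k+1}/r}(0))\le C\pi(e^{k+1}/r)^2$ with $C=9\Theta_*(\Sigma,\infty)+\tfrac{59}{16\pi}\int_\Sigma|H|^2$. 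Summing produces the geometric series $C\pi r^2 e^2\sum_{k\ge 0}e^{-2k}=C\pi r^2 e^4/(e^2-1)$, which is precisely the announced bound.

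For the Willmore integrability, I would substitute formula \eqref{tildeh} into $|\tilde H|^2$, apply Cauchy--Schwarz to the middle term, and use $|h^\bot|^2=|f^\bot|^2/|f|^4$ from \eqref{bot} together with $|\nabla^\bot r|=|f^\bot|/|f|$ from \eqref{remainder formula}. This gives the pointwise estimate
\[
|\tilde H|^2\,d\mu_{\tilde g}\le c\bigl(|H|^2+|\nabla^\bot r|^2/r^2\bigr)\,d\mu_g,
\]
whose right-hand side is integrable: the first term by the finite-Willmore hypothesis and the second by Corollary \ref{integral discription of upper density at infinity} under \eqref{lowerdendity}.

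For the extended first-variation identity, I would apply the standard cutoff argument. Fix $\phi\in C^1(\mathbb{R},[0,1])$ with $\phi\equiv 0$ on $[0,1]$, $\phi\equiv 1$ on $[2,\infty)$, $|\phi'|\le 2$, and set $\eta_\epsilon(y)=\phi(|y|/\epsilon)$. Since $X\eta_\epsilon$ is compactly supported away from $0$, the smooth first variation on $\tilde\Sigma\setminus\{0\}$ gives $\int\mathrm{div}^{\tilde\Sigma}(X\eta_\epsilon)\,d\mu_{\tilde g}=-\int\langle X\eta_\epsilon,\tilde H\rangle\,d\mu_{\tilde g}$. Writing $\mathrm{div}^{\tilde\Sigma}(X\eta_\epsilon)=\eta_\epsilon\,\mathrm{div}^{\tilde\Sigma}X+X\cdot\nabla^{\tilde\Sigma}\eta_\epsilon$ and sending $\epsilon\to 0$, two of the three terms pass to the limit by dominated convergence (using $\tilde H\in L^1_{\mathrm{loc}}(\mu_{\tilde g})$, which follows from the first two parts together with Cauchy--Schwarz), while the cutoff error is controlled by $|\int X\cdot\nabla^{\tilde\Sigma}\eta_\epsilon\,d\mu_{\tilde g}|\le(2\|X\|_\infty/\epsilon)\,\mu_{\tilde g}(B_{2\epsilon}(0)\setminus\{0\})=O(\epsilon)$ thanks to the volume bound. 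The genuine obstacle is therefore the first part: the quadratic-in-$r$ volume estimate near the inverting point drives the whole proposition, since it is precisely what kills the cutoff error in the removability step; once it is in hand, the remaining pieces are algebraic simplification and a textbook cutoff argument.
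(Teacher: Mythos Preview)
Your proposal is correct and follows essentially the same route as the paper: the dyadic-annulus summation for the volume bound (using Corollary~\ref{integral discription of upper density at infinity}), the pointwise estimate $|\tilde H|^2\,d\mu_{\tilde g}\lesssim(|H|^2+|\nabla^\bot r/r|^2)\,d\mu_g$ obtained from the explicit inversion formula for $\tilde H$, and the cutoff argument for the first variation are exactly what the paper does (the paper writes the $\tilde H$ estimate via the $f$-form \eqref{generalinverth} rewritten as in \eqref{observation} rather than via \eqref{tildeh}, but this is the same computation, and it merely cites \cite[Appendix]{KS} for the cutoff step you wrote out in full).
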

\begin{proof}
By (\ref{finite willmore}), (\ref{lowerdendity}) and (\ref{everyradius}) in Corollary \ref{integral discription of upper density at infinity}, we know for any $\rho\in (0,\infty)$,
$$\frac{\mathcal{H}^2(B_\rho(0)\cap \Sigma)}{\pi \rho^2}\le C,$$
where $C=9\Theta_*(\Sigma,\infty)+\frac{59}{16\pi}\int_{\Sigma}|H|^2d\mu_g$.
Since $\tilde{g}=\frac{1}{|f|^4}g$, we know $d\mu_{\tilde{g}}=\frac{1}{|f|^4}d\mu_g$. So,  for $r=e^{-t}>0$,
\begin{align}\label{localuperdensity}
\mu_{\tilde{g}}(\tilde{\Sigma}\cap B_r\backslash \{0\})
&=\lim_{\varepsilon\to 0}\int_{\tilde{\Sigma}\cap (B_r\backslash B_{\varepsilon})}d\mu_{\tilde{g}}=\lim_{\varepsilon\to 0}\int_{\Sigma\cap (B_{\frac{1}{\varepsilon}}\backslash B_{\frac{1}{r}})}\frac{1}{|f|^4}d\mu_{g}\nonumber\\
&=\sum_{k=1}^{\infty}\int_{\Sigma\cap(B_{e^{t+k}}\backslash B_{e^{t+(k-1)}})}\frac{1}{r^4}d\mu_g\nonumber\\
&\le \sum_{k=1}^{\infty}\frac{C\pi e^{2(t+k)}}{e^{4(t+(k-1))}}=\frac{C\pi e^{4}}{(e^2-1)}r^2.
\end{align}
 By (\ref{generalinverth}), we note
\begin{align}\label{observation}
\tilde{H}&=|f|^2H-2\langle H,f\rangle f-4f^{\top}+\frac{8|f^{\top}|^2}{|f|^2}f-4f\nonumber\\
&=|f|^2H-2\langle H,f\rangle f+4f^{\bot}-\frac{8|f^{\bot}|^2}{|f|^2}f.
\end{align}
Thus
\begin{align*}
|\tilde{H}|^2
&\le 320(|f|^4|H|^2+|f^{\bot}|^2)
\end{align*}
and
\begin{align*}
\int_{\tilde{\Sigma}\cap(B_\rho(0)\backslash \{0\})}|\tilde{H}|^2d\mu_{\tilde{g}}
&\le 320\int_{\Sigma\backslash B_{\frac{1}{\rho}}}(|f|^4|H|^2+|f^{\bot}|^2)|f|^{-4}d\mu_g\\
&\le 320\int_{\Sigma}\big(|H|^2+\big|\frac{\nabla^{\bot}r}{r}\big|^2\big)d\mu_g.
\end{align*}
By Corollary \ref{integral discription of upper density at infinity} again, the right hand term is finite. So, letting $\rho\to+\infty$ and we get
\begin{align}\label{finiteinveresewillmore}
\int_{\tilde{\Sigma}\backslash \{0\}}|\tilde{H}|^2d\mu_{\tilde{g}}\le  320\int_{\Sigma}\big(|H|^2+\big|\frac{\nabla^{\bot}r}{r}\big|^2\big)<+\infty.
\end{align}
Finally, by (\ref{localuperdensity}), (\ref{finiteinveresewillmore}) and cut-off argument(see \cite[Appendix]{KS}), we know $\tilde{\Sigma}$ is a varifold in $\mathbb{R}^n$ with generalized mean curvature $\tilde{H}\in L^{2}(\mu_{\tilde{g}})$.
\end{proof}
With this proposition, we know the monotonicity formula holds for the varifold $\tilde{\Sigma}$ with generalized mean curvature $\tilde{H}$ and can be used to show the following density identity.
\begin{lemma}[Density identity]\label{density identity}
Assume $f:\Sigma\to \mathbb{R}^{n}$ is a properly immersed surface satisfying (\ref{finite willmore}) and (\ref{lowerdendity}) and $\tilde{\Sigma}=h(\Sigma)$ is its inverted surface. If the base point $0\notin \Sigma$,  then
\begin{align*}
\Theta(\tilde{\Sigma},0):=\lim_{\sigma\to 0}\frac{\mu_{\tilde{g}}(\tilde{\Sigma}\cap B_{\sigma}(0))}{\sigma^2}=\Theta(\Sigma,\infty)\ge 1.
\end{align*}
\end{lemma}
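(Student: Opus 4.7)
The plan is to run the monotonicity identity (\ref{monotonicity equality}) on $\tilde{\Sigma}$ centered at the origin and let the inner radius shrink to $0$ while the outer radius grows to $\infty$; the resulting identity will, after comparison with the density formula (\ref{density formula}) from Lemma \ref{Density Formula}, produce exactly the claim.

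First I would set up the geometric picture: since $f:\Sigma\to\mathbb{R}^n$ is proper, its image is closed, and $0\notin f(\Sigma)$ forces $d(0,f(\Sigma))=:d_0>0$. Hence $\tilde{\Sigma}=h(\Sigma)$ is contained in the punctured ball $\overline{B}_{1/d_0}(0)\setminus\{0\}$ and in particular is bounded. By Proposition \ref{weak removable}, after trivial extension across $0$, the object $\tilde{\Sigma}$ is a rectifiable $2$-varifold in $\mathbb{R}^n$ with generalized mean curvature $\tilde{H}\in L^2(\mu_{\tilde g})$, finite total mass, and the density bound $\mu_{\tilde g}(B_r(0))\le Ce^4(e^2-1)^{-1}\pi r^2$. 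In particular, the monotonicity identity (\ref{monotonicity equality}) is available for $\tilde{\Sigma}$ at the point $0$.

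The core computation would then be: fix $\rho$ so large that $\tilde{\Sigma}\subset B_\rho(0)$, apply (\ref{monotonicity equality}) for $0<\sigma<\rho$, and pass to the limit $\sigma\to 0$. On the left, $\mu_{\tilde g}(B_\sigma(0))/\sigma^2\to \pi\Theta(\tilde\Sigma,0)$ by definition. The inner boundary term vanishes by Cauchy–Schwarz exactly as in the derivation of Corollary \ref{integral discription of upper density at infinity}:
\begin{align*}
\Bigl|\frac{1}{2\sigma^2}\int_{B_\sigma(0)}\tilde r\langle \tilde\nabla^\perp \tilde r,\tilde H\rangle\,d\mu_{\tilde g}\Bigr|
\le \frac{1}{2}\Bigl(\frac{\mu_{\tilde g}(B_\sigma(0))}{\sigma^2}\Bigr)^{1/2}\|\tilde H\|_{L^2(B_\sigma)}\longrightarrow 0,
\end{align*}
where the first factor stays bounded by Proposition \ref{weak removable} and the second vanishes by absolute continuity of $\int|\tilde H|^2$. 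Next, push $\rho\to\infty$; because $\tilde{\Sigma}$ is compactly supported, $\mu_{\tilde g}(B_\rho)/\rho^2\to 0$, the outer boundary term likewise vanishes, and the integrals over $B_\rho\setminus B_\sigma$ converge to integrals over $\tilde\Sigma\setminus\{0\}$ by dominated convergence. What emerges is
\begin{align*}
\pi\Theta(\tilde\Sigma,0)=\int_{\tilde\Sigma\setminus\{0\}}\Bigl(\frac{|\tilde H|^2}{16}-\Bigl|\frac{\tilde H}{4}+\frac{\tilde\nabla^\perp\tilde r}{\tilde r}\Bigr|^2\Bigr)d\mu_{\tilde g},
\end{align*}
which, combined with (\ref{density formula}) and the hypothesis $0\notin\Sigma$ (so $\Theta(\Sigma,0)=0$), gives $\Theta(\tilde\Sigma,0)=\Theta(\Sigma,\infty)$.

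For the lower bound $\Theta(\tilde\Sigma,0)\ge 1$, I would argue that $0\in \mathrm{spt}\,\mu_{\tilde g}$: since $\Theta_*(\Sigma,\infty)\ge 0$ combined with properness forces $\Sigma$ to be unbounded (otherwise its mass would be finite and the density at infinity zero, but the point is that $\tilde H\in L^2$ and the accumulation at $0$ comes from inverting points with $|f|\to\infty$). Then Corollary \ref{density at each point} applied to the extended varifold $\tilde\Sigma$, which has multiplicity $\ge 1$ a.e., yields $\Theta(\tilde\Sigma,0)\ge 1$. The only genuinely delicate point is verifying the vanishing of the $\sigma$-boundary term and the dominated-convergence step at $\sigma\to 0$; this is where Proposition \ref{weak removable}'s quadratic upper density bound is indispensable, and it is the main technical obstacle, although it has essentially been prepared for by the preceding proposition.
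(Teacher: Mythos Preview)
Your proposal is correct and follows essentially the same approach as the paper: apply the monotonicity identity (\ref{monotonicity equality}) to the extended varifold $\tilde\Sigma$ at the origin, let $\sigma\to 0$ and $\rho\to\infty$ using the quadratic upper bound from Proposition \ref{weak removable} to kill the inner boundary term and the compact support of $\tilde\Sigma$ (from properness and $0\notin\Sigma$) to kill the outer one, and then invoke the density formula (\ref{density formula}). For the inequality $\Theta(\tilde\Sigma,0)\ge 1$ the paper uses exactly the upper semi-continuity of Corollary \ref{density at each point} at points of $\tilde\Sigma$ approaching $0$, which is what your appeal to that corollary amounts to; your justification that $0\in\mathrm{spt}\,\mu_{\tilde g}$ is slightly informal (the clean reason is that $\Sigma$, being properly immersed and noncompact, is unbounded), but the argument is otherwise the paper's.
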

\begin{proof}
In this case, for $0<\sigma<\rho<\infty$, we have the monotonicity formula
\begin{align}\label{monoton}
\frac{\mu_{\tilde{g}}(\tilde{\Sigma}\cap B_{\sigma})}{\sigma^2}
&=\frac{\mu_{\tilde{g}}(\tilde{\Sigma}\cap B_{\rho})}{\rho^2}+\frac{1}{2\rho^2}\int_{\tilde{\Sigma}\cap B_{\rho}}\langle \tilde{r}\tilde{\nabla}^{\bot}\tilde{r}, \tilde{H}\rangle-\frac{1}{2\sigma^2}\int_{\tilde{\Sigma}\cap B_{\sigma}}\langle \tilde{r}\tilde{\nabla}^{\bot}\tilde{r}, \tilde{H}\rangle\nonumber\\
     &+\frac{1}{16}\int_{\tilde{\Sigma}\cap (B_{\rho}\backslash B_{\sigma})}|\tilde{H}|^2-\int_{\tilde{\Sigma}\cap (B_{\rho}\backslash B_{\sigma})}|\frac{\tilde{\nabla}^{\bot} \tilde{r}}{\tilde{r}}+\frac{\tilde{H}}{4}|^2
\end{align}
On the one hand, by (\ref{finiteinveresewillmore}) and (\ref{localuperdensity}),  we know
$$\lim_{\sigma\to 0}W(\sigma):=\lim_{\sigma\to0}\int_{\tilde{\Sigma}\cap B_{\sigma}}|\tilde{H}|^2= 0$$
and
$$\lim_{\sigma\to 0}|\frac{1}{2\sigma^2}\int_{\tilde{\Sigma}\cap B_{\sigma}}\langle \tilde{r}\tilde{\nabla}^{\bot}\tilde{r}, \tilde{H}\rangle|\le \lim_{\sigma\to 0}\frac{1}{2}(\frac{\mu_{\tilde{g}}(\tilde{\Sigma}\cap B_{\sigma})}{\sigma^2})^{1/2}W(\sigma)^{1/2}= 0.$$
On the other hand, the properness of $f$ and $0\notin \Sigma$ implies $\tilde{\Sigma}\backslash B_\sigma(0)$ is compact. So we have
$$\lim_{\rho\to \infty}\frac{\mu_{\tilde{g}}(\tilde{\Sigma}\cap B_{\rho})}{\rho^2}= 0$$
and
$$\lim_{\rho\to +\infty}|\frac{1}{2\rho^2}\int_{\tilde{\Sigma}\cap B_{\rho}}\langle \tilde{r}\tilde{\nabla}^{\bot}\tilde{r}, \tilde{H}\rangle|\le \lim_{\rho\to+\infty}\frac{1}{2}(\frac{\mu_{\tilde{g}}(\tilde{\Sigma}\cap B_{\rho})}{\rho^2})^{1/2}(\int_{\tilde{\Sigma}}|\tilde{H}|^2)^{1/2}= 0.$$
Letting $\rho\to \infty$ and $\sigma\to 0$ in (\ref{monoton}) and applying the density formula (\ref{density formula}), we get
$$\Theta(\tilde{\Sigma},0)=\int_{\tilde{\Sigma}\backslash \{0\}}\big(\frac{|\tilde{H}|^2}{16}-\big|\frac{\tilde{H}}{4}
+\frac{\tilde{\nabla}^{\bot}\tilde{r}}{\tilde{r}}\big|^2\big)d\mu_{\tilde{g}}
=\Theta(\Sigma,\infty).$$

Noting the inverted surface $\tilde{\Sigma}$ is smooth away from 0, by
Lemma \ref{density at each point} and the properness of $f$, we know
\begin{align*}
\Theta(\Sigma,\infty)=\Theta(\tilde{\Sigma},0)\ge \limsup_{y\to 0}\Theta(\tilde{\Sigma},y)\ge 1.
\end{align*}
\end{proof}


\subsection{Topological Finiteness}
\

The density identity and density formula implies the single term $\Theta(\Sigma, \infty)$ can control both the Willmore energy and the local density of the inverted surface. So, when combining with the Allard-Reifenberg type $C^{\alpha}$ regularity Theorem \ref{Holder Regularity}, we can prove the main theorem.
\begin{prop}\label{topolofical rigidity of minimal ends}
Assume $f:\Sigma\to \mathbb{R}^n$ is a properly immersed surface with finite Willmore energy. For any $R>0$, let $\Sigma_1$ be a noncompact connected component of $\Sigma\backslash f^{-1}(B_R(0))$. Then $$\Theta(\Sigma_1,\infty):=\lim_{r\to \infty}\frac{\mathcal{H}^2(\Sigma_1\cap B_r(0)}{\pi r^2}\ge1.$$
 Moreover, there exists an $\varepsilon=\varepsilon(n)>0$ such that if $$\Theta(\Sigma_1,\infty)\le 1+\varepsilon(n),$$
  then there is an $R_2\ge R$ such that for any $r\ge R_2$, $\Sigma_1\backslash f^{-1}(B_r)$ is homeomorphic to $S^1\times \mathbb{R}$ and $f:\Sigma_1\backslash f^{-1}(B_r)\to \mathbb{R}^n$ is embedding.
\end{prop}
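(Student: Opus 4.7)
The plan is to invert at the origin in order to transfer the end of $\Sigma_1$ at infinity into an isolated point singularity of a varifold at $0$, then exploit the density identity (Lemma \ref{density identity}) and the $C^{\alpha}$ Allard--Reifenberg regularity (Theorem \ref{Holder Regularity}). Because $f(\Sigma_1)\subset\mathbb{R}^n\setminus B_R(0)$, the inversion $h(x)=x/|x|^2$ carries $f(\Sigma_1)$ into $B_{1/R}(0)\setminus\{0\}$. Write $\tilde f=h\circ f:\Sigma_1\to\mathbb{R}^n$ and let $\tilde\Sigma_1$ denote the associated push-forward varifold on $B_{1/R}(0)\setminus\{0\}$, with smooth boundary on $\partial B_{1/R}(0)$. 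The restriction $f|_{\Sigma_1}$ is proper (since $\Sigma_1$ is closed in $\Sigma$), and the cut-off/$L^2$ argument behind Proposition \ref{weak removable}, applied locally inside $B_{1/R}(0)$, shows that the trivial extension of $\tilde\Sigma_1$ across $0$ is a rectifiable $2$-varifold in $B_{1/R}(0)$ with generalized mean curvature $\tilde H\in L^2(d\mu_{\tilde g})$.

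For the density lower bound, if $\Theta_{*}(\Sigma_1,\infty)=+\infty$ then $\Theta(\Sigma_1,\infty)=+\infty\geq 1$ by Remark \ref{infinitedensity}. Otherwise the hypotheses of Lemmas \ref{Density Formula} and \ref{density identity} apply to $\Sigma_1$ (the fixed compact boundary on $\partial B_R(0)$ produces only bounded intermediate terms that play no role in the limits at $0$ and at $\infty$), giving $\Theta(\tilde\Sigma_1,0)=\Theta(\Sigma_1,\infty)$. Since $\Sigma_1$ is noncompact and $f$ is proper, $\tilde f(\Sigma_1)$ accumulates at $0$, so $0\in\mathrm{spt}\,\mu_{\tilde g}$; Corollary \ref{density at each point} then yields $\Theta(\tilde\Sigma_1,0)\geq 1$.

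Now suppose $\Theta(\Sigma_1,\infty)\leq 1+\varepsilon$ for $\varepsilon$ to be chosen. By Corollary \ref{density at each point} the density equals the pointwise limit $\lim_{\rho\to 0}\mu_{\tilde g}(B_\rho(0))/(\pi\rho^2)$, and absolute continuity of the $L^2$-integral $\int|\tilde H|^2$ forces $\int_{B_\rho(0)}|\tilde H|^2\to 0$ as $\rho\to 0$. Hence for any prescribed $\delta>0$ one may pick $\rho$ small enough that both $\mu_{\tilde g}(B_\rho(0))/(\pi\rho^2)\leq 1+\delta$ and $\int_{B_\rho(0)}|\tilde H|^2\leq\delta$. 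Fixing any $\alpha\in(0,1)$ and choosing $\varepsilon$ small enough (depending on $n$ and $\alpha$) that the resulting $\delta$ satisfies $\delta\leq\delta'_{6}$ and $2^{44}\delta^{1/80}$ lies below the Reifenberg constant of Theorem \ref{Holder Regularity}, I conclude that $\mathrm{spt}\,\mu_{\tilde g}\cap B_{\rho'}(0)$ is $C^{\alpha}$-homeomorphic to a closed topological $2$-disk with $0$ the interior centre, for some $\rho'\sim\delta^{1/2}\rho$.

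The genuine obstacle I anticipate is upgrading this set-theoretic disk to an intrinsically embedded cylindrical end. At every $y\in\mathrm{spt}\,\mu_{\tilde g}\cap(B_{\rho'}(0)\setminus\{0\})$, $\tilde f$ is a smooth immersion and $\Theta(\tilde\Sigma_1,y)$ is the integer total multiplicity of $\tilde f$ at $y$. Upper semi-continuity of density (Corollary \ref{density at each point}) gives $\limsup_{y\to 0}\Theta(\tilde\Sigma_1,y)\leq 1+\varepsilon$, so after shrinking $\rho'$ and taking $\varepsilon<1$ the multiplicity is pinned to $1$ on $B_{\rho'}(0)\setminus\{0\}$. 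Hence $\tilde f$ is a local diffeomorphism and a global bijection from $\tilde f^{-1}(B_{\rho'}(0)\setminus\{0\})$ onto $\mathrm{spt}\,\mu_{\tilde g}\cap(B_{\rho'}(0)\setminus\{0\})$, thus a homeomorphism onto a punctured topological disk, which is homeomorphic to $S^1\times\mathbb{R}$. Since $h$ is a diffeomorphism away from $0$, $f=h^{-1}\circ\tilde f$ is likewise embedding on this preimage, and $\tilde f^{-1}(B_{\rho'}(0)\setminus\{0\})=\Sigma_1\setminus f^{-1}(B_{1/\rho'}(0))$; taking $R_2=1/\rho'$ finishes the proof. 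The delicate point is precisely the last step: turning a purely set-theoretic Reifenberg disk into an intrinsic embedded cylinder by using that the integer-valued density at smooth points is forced to $1$ by upper semi-continuity.
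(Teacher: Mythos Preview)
Your argument is correct and follows essentially the same route as the paper: invert the end to produce a varifold with an isolated singularity at the base point, invoke Proposition~\ref{weak removable} and the density identity (Lemma~\ref{density identity}) to obtain $\Theta(\tilde\Sigma_1,0)=\Theta(\Sigma_1,\infty)\in[1,1+\varepsilon)$, and then apply the Allard--Reifenberg regularity Theorem~\ref{Holder Regularity}. The only cosmetic difference is that the paper first caps off $\partial\Sigma_1$ with a compact surface to obtain a boundaryless properly immersed surface (so that Lemmas~\ref{Density Formula}--\ref{density identity} apply verbatim) and then inverts at a point off the extended surface, whereas you invert directly at the origin and remark that the boundary sits on $\partial B_{1/R}(0)$ and is invisible in the limits; your extra care in the final paragraph, using upper semi-continuity to pin the integer multiplicity to $1$ and deduce that $\tilde f$ is a bijective local diffeomorphism onto the punctured Reifenberg disk, makes explicit a step the paper leaves implicit.
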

\begin{proof}
Since $\Sigma$ is proper, we know $\Sigma_1$ has compact boundary, thus can be extended to be a complete surface in $\mathbb{R}^n$ without boundary by gluing a compact surface $\Sigma_2$ with $\partial \Sigma_2=-\partial\Sigma_1$. So we can assume $\Sigma_1$ to be a surface properly immersed in $\mathbb{R}^n$ without boundary and satisfies  (\ref{finite willmore}).
 Thus by Remark \ref{infinitedensity} and Lemma \ref{density identity}, we know $\Theta(\Sigma_1,\infty):=\lim_{r\to \infty}\frac{\mathcal{H}^2(\Sigma_1\cap B_r(0)}{\pi r^2}$ is well-defined and  $\Theta(\Sigma_1,\infty)\ge1.$

 Moreover, in the case $\Theta(\Sigma_1,\infty)\le 1+\varepsilon$, choose a base point $x_0\notin \Sigma_1$, define $h(x)=\frac{f(x)-x_0}{|f(x)-x_0|^2}+x_0$ and denote the inverted surface by $\tilde{\Sigma}_1=h(\Sigma_1)$. Then by Proposition \ref{weak removable} and Lemma \ref{density identity}, we know $\tilde{\Sigma}_1$ is a rectifiable $2$-varifold in $\mathbb{R}^n$ with generalized mean curvature $\tilde{H}\in L^{2}(\mathbb{R}^n,d\mu_{\tilde{g}})$ and
\begin{align*}
\Theta(\tilde{\Sigma}_1,x_0)
=\frac{1}{16}\int_{\tilde{\Sigma}_1}|\tilde{H}|^2-\int_{\tilde{\Sigma}_1}|\frac{\tilde{\nabla}^{\bot}\tilde{r}}{\tilde{r}}+\frac{\tilde{H}}{4}|^2
=\Theta(\Sigma_1,\infty)
\in [1,1+\varepsilon).
\end{align*}
So, there exists $\rho_0>0$ such that for any $\rho<\rho_0$, we have
\begin{align*}
\frac{\mathcal{H}^2(\tilde{\Sigma}_1\cap B_\rho(x_0))}{\pi \rho^2}\le 1+2\varepsilon,\text{ and  } \int_{\tilde{\Sigma}_1\cap B_{\rho}(x_0)}|\tilde{H}|^2<\varepsilon.
\end{align*}
Since $\tilde{\Sigma}_1$ is smooth outside the base point $x_0$, we know $\Theta(x)\ge 1$ for every $x\in \tilde{\Sigma}_1$.
Taking $\varepsilon=\varepsilon(n)$ small enough and applying Theorem \ref{Holder Regularity}  we know $\tilde{\Sigma}_1\cap B_\sigma(x_0)$ is a topological disk for $\sigma\le \frac{1}{2^{19}}(2\varepsilon)^{\frac{1}{2}}\rho_0$, which implies the conclusion.
\end{proof}

\begin{rmk}\label{end density}
By a geometric measure theory argument of  E.Kuwert, Y.X.Li and R.Sch\"{a}tzle
(see \cite[Appendix]{KS} and \cite{KLS}), it is directly shown $\Theta(\Sigma_1,\infty)\ge 1$ and if $\Theta(\Sigma_1,\infty)<2$, then $\Theta(\Sigma_1,\infty)=1$. We sketch the proof for reader's convenience.
\end{rmk}
\begin{proof}
 We also assume $\Theta(\Sigma_1,\infty)<+\infty$. Extend $\Sigma_1$ to be smooth and boundary free and still denote it by $\Sigma_1$. Take the current $T_r=\big(\frac{1}{r}\big)_{\sharp}\Sigma_1$ and the varifold $\mu_r=\big(\frac{1}{r}\big)_{\sharp}(\mathcal{H}^2\llcorner \Sigma_1)$. By (\ref{rhovanishing}), for any $R>0$,
 $$\lim_{r\to \infty}\|\delta \mu_r\|(B_R(0))=\lim_{r\to \infty}\frac{\int_{\Sigma_1\cap B_{rR}(0)}|H|d\mu_g}{r}=0.$$
 So, by the compactness of varifold \cite{FF60}\cite[Theorem 32.2 and Lemma 26.14]{LS83} and the compactness of integral varifolds\cite{A72},\cite[Theorem 42.7 and Remark 42.8]{LS83}, there exist  an integral current $T_\infty$, a stationary integral  varifold $\mu_\infty$ and a sequence of $r_i\to +\infty$ such that
     $$T_{r_i}\to T_\infty( \text{ weak convergence as currents }),$$
     $$\mu_{r_i}\to \mu_\infty(\text{ weak convergence as varifolds}).$$
  Since $\mu_\infty$ is a Radon measure, we know  for fixed $x$ and $\mathcal{L}^1$-almost every $\rho>0$, $\mu_{\infty}(\partial B_\rho(x))=0$ and
  $$\frac{\mu_\infty(B_{\rho}(x))}{\pi \rho^2}= \lim_{i\to +\infty}\frac{\mathcal{H}^2(\Sigma_1\cap B_{r_i\rho}(x))}{\pi(r_i\rho)^2}=\Theta(\Sigma_1,\infty).$$
 Since $\mu_\infty$ is stationary and integral,  by the monotonicity formula and the upper semi-continuity, we know
 $$\Theta(\Sigma_1,\infty)=\Theta(\mu_\infty,\infty)=\Theta(\mu_\infty,x)\ge\limsup_{y\to x}\Theta(\mu_\infty,y)\ge 1.$$

 Moreover, when $\Theta(\Sigma_1,\infty)<2$, noting $\lim_{i\to \infty}\|\delta \mu_{r_i}\|(B_R(0))=0$ for any $R>0$ and  $\Theta(\mu_{r_i},\infty)\equiv \Theta(\Sigma_1,\infty)<2$, by the same argument as in \cite[Proposition 2.2]{KLS}, we get
 $$\mu_\infty=\mu_{T_\infty}.$$
 So  $\Theta(\mu_{T_\infty},\infty)=\Theta(\mu_\infty,\infty)\in [1,2)$ and by \cite[Theorem 2.1]{KLS}, we know $T_\infty$ is a plane. Thus
 $$\Theta(\Sigma_1,\infty)=\Theta(\mu_\infty,0)=\Theta(\mu_{T_\infty},0)=1.$$
\end{proof}

As a corollary, our main theorem is a global version of the above topological rigidity Proposition \ref{topolofical rigidity of minimal ends}.
\begin{thm}[$\mathbf{Finite\  Topology}$]\label{finite topology}
Assume $f:\Sigma\to \mathbb{R}^n$ is a properly immersed surface with finite Willmore energy.
Then $$e(\Sigma,\infty)\le \Theta(\Sigma,\infty).$$
 Moreover, if we assume
  $$e(\Sigma,\infty)> \Theta(\Sigma, \infty)-1<\infty,$$
  then
\begin{enumerate}[1)]
\item $\Sigma$ has finite topology;
\item $\Theta(\Sigma,\infty)=e(\Sigma,\infty)=:e$ is an integer and $\Sigma$ has exact $e$ ends with density one.
\item $\Sigma$ has finite total curvature, i.e., $\int_{\Sigma}|A|^2d\mu_g<+\infty$;
\item $\Sigma$ is conformal to a closed Riemann surface with $e(\Sigma,\infty)$ points removed.

\end{enumerate}
\end{thm}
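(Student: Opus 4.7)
\textbf{Step 1: The inequality $e(\Sigma,\infty)\le\Theta(\Sigma,\infty)$.} For any $R>0$ for which $f^{-1}(\partial B_R(0))$ is transverse, decompose $\Sigma\setminus f^{-1}(B_R(0))$ into its (at most countably many) compact and noncompact connected components. The noncompact ones $\{\Sigma_i^R\}$ each carry $\Theta(\Sigma_i^R,\infty)\ge 1$ by Proposition \ref{topolofical rigidity of minimal ends}. Since compact components contribute nothing in the limit $r\to\infty$, summing gives
\[
\sum_i \Theta(\Sigma_i^R,\infty)\le \Theta(\Sigma,\infty),
\]
from which letting the number of noncompact pieces grow yields $e(\Sigma,\infty)\le\Theta(\Sigma,\infty)$.

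\textbf{Step 2: Integrality and density one at each end.} The hypothesis $e(\Sigma,\infty)>\Theta_*(\Sigma,\infty)-1<\infty$ together with Step 1 and $\Theta_*\le\Theta$ forces both $e:=e(\Sigma,\infty)$ and $\Theta(\Sigma,\infty)$ to be finite and to satisfy $e\le\Theta(\Sigma,\infty)<e+1$. For $R$ large enough the number of noncompact components stabilizes at $e$; call them $\Sigma_1,\ldots,\Sigma_e$. Since each $\Theta(\Sigma_i,\infty)\ge 1$ and their sum is $<e+1$, each individual density must satisfy $\Theta(\Sigma_i,\infty)<2$. Remark \ref{end density} then upgrades this to $\Theta(\Sigma_i,\infty)=1$, so $\Theta(\Sigma,\infty)=e$ is an integer and every end has density one. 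This proves (2).

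\textbf{Step 3: Finite topology and conformal structure.} Each end $\Sigma_i$ has $\Theta(\Sigma_i,\infty)=1<1+\varepsilon(n)$, so by the second part of Proposition \ref{topolofical rigidity of minimal ends} there is $R_i$ such that $\Sigma_i\setminus f^{-1}(B_{R_i}(0))$ is embedded and homeomorphic to $S^1\times\mathbb{R}$. On the other hand $f^{-1}(\overline{B_{R_0}(0)})$ is compact by properness (for $R_0=\max_i R_i$), so has finite topology as a compact $2$-manifold with boundary. Gluing the finitely many annular ends to this compact piece yields (1): $\Sigma$ is diffeomorphic to a closed surface with $e$ points removed, which also gives (4) by endowing each embedded planar end with its natural conformal parameter (or by applying Huber's theorem once (3) is in hand).

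\textbf{Step 4: Finite total curvature.} For each end $\Sigma_i$ choose a base point $x_0\notin\Sigma_i$ and invert to $\tilde\Sigma_i$. By Proposition \ref{weak removable}, $\tilde\Sigma_i$ is a rectifiable $2$-varifold on $\mathbb{R}^n$ with $\tilde H\in L^2(d\mu_{\tilde g})$; by Lemma \ref{density identity}, $\Theta(\tilde\Sigma_i,x_0)=\Theta(\Sigma_i,\infty)=1$. Thus on a small ball $B_\sigma(x_0)$ the rescaled density and Willmore energy are as small as desired, so Theorem \ref{Holder Regularity} applies, giving that $\tilde\Sigma_i\cap B_\sigma(x_0)$ is a $C^\alpha$ topological disk and, combined with the Lipschitz approximation (Theorem \ref{Lipschitz Approximation for 2-varifold}) together with the standard higher regularity for varifolds with $L^2$ mean curvature that are already close to a plane in $C^\alpha$, a $C^{1,\alpha'}$ graph with $W^{2,2}$ regularity away from $x_0$. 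Hence $\int_{\tilde\Sigma_i\cap B_\sigma(x_0)}|\tilde A|^2\,d\mu_{\tilde g}<\infty$. The traceless second fundamental form $|A-\tfrac{H}{2}g|^2\,d\mu_g$ is a pointwise conformal invariant, so together with $\int|H|^2<\infty$ this gives $\int_{\Sigma_i\cap(\mathbb{R}^n\setminus B_{1/\sigma}(x_0))}|A|^2\,d\mu_g<\infty$. Summing over the $e$ ends and adding the compact interior part yields (3).

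\textbf{Main obstacle.} Steps 1--3 are largely formal reductions to the previously established Proposition \ref{topolofical rigidity of minimal ends} and Remark \ref{end density}. The delicate point is Step 4: the $C^\alpha$ regularity of Theorem \ref{Holder Regularity} by itself is not enough to conclude $\int|\tilde A|^2<\infty$ near the inverted basepoint, and one needs to bootstrap from $C^\alpha$-flatness plus $H\in L^2$ up to $C^{1,\alpha'}$/graphical regularity (where the conformal invariance of the traceless $|A|^2$ becomes applicable). Handling this bootstrap cleanly at the critical exponent is where care is required.
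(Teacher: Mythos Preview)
Your Steps 1--3 match the paper's argument essentially verbatim: decompose the complement of a large ball into noncompact components, use Proposition \ref{topolofical rigidity of minimal ends} to get $\Theta(\Sigma_i,\infty)\ge 1$, force each density below $2$ and invoke Remark \ref{end density} to pin it at $1$, then apply the second half of Proposition \ref{topolofical rigidity of minimal ends} to make each end an embedded annulus outside a large ball.

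Step 4 is where you diverge from the paper, and there is a genuine gap. You propose to bootstrap from the $C^\alpha$ regularity of Theorem \ref{Holder Regularity} plus $\tilde H\in L^2$ to $C^{1,\alpha'}$/$W^{2,2}$ graphical regularity for the inverted end near $x_0$, and then transfer $\int|\tilde A|^2<\infty$ back to $\Sigma$ via conformal invariance of the traceless second fundamental form. But the bootstrap you invoke does not exist at the critical exponent: the entire point of the paper (see the introduction and Section \ref{reifenberg}) is that with $H\in L^2$ in dimension two the Allard tilt-excess iteration fails, and $C^\alpha$ is the best one can extract. There is no ``standard higher regularity'' taking you from $C^\alpha$-close-to-a-plane with $H\in L^2$ to $C^{1,\alpha'}$; that would amount to a critical Allard theorem, which is precisely what is unavailable. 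You correctly flag this as the main obstacle, but you do not resolve it.

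The paper avoids this difficulty entirely. Once finite topology is established in Step 3, it applies Ilmanen's local Gauss--Bonnet estimate \cite{I95}
\[
(1-\varepsilon)\int_{\Sigma\cap B_r}|A|^2\,d\mu_g \le \int_{\Sigma\cap B_s}|H|^2\,d\mu_g + 8\pi\, g(\Sigma\cap B_s) + \frac{24\pi D' s^2}{\varepsilon(s-r)^2},
\]
where $g(\cdot)$ is the genus and $D'=\sup_{t\in[r,s]}\mathcal{H}^2(\Sigma\cap B_t)/(\pi t^2)$. Since the genus is now bounded and the density ratio is controlled by $\Theta(\Sigma,\infty)$, sending $s\to\infty$ then $r\to\infty$ gives $\int_\Sigma|A|^2<\infty$ directly. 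Item (4) then follows from Huber's theorem \cite{H57}. This route requires no regularity for the inverted surface beyond what Theorem \ref{Holder Regularity} already provides for the topological conclusion.
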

\begin{proof}
There is nothing to prove if $\Theta(\Sigma,\infty)=+\infty$. So we assume $\Theta(\Sigma,\infty)<+\infty$.  By the properness, for each $r>0$,  $\Sigma\cap B_{r}(0)$ has the  connected components decomposition  $\Sigma\cap B_{r}(0)=K_r\sqcup \sqcup_{i\in I(r)} \Sigma_{i,r}$, where $K_r$ is the compact part and each $\Sigma_{i,r}$ is noncompact. By Proposition \ref{topolofical rigidity of minimal ends}, we get for each $i\in I(r)$,
$\Theta(\Sigma_{i,r},\infty)\ge 1$. Since these $\{\Sigma_{i,r}\}_{i\in I(r)}$ are disjoint, we know
\begin{align}\label{Ir}
|I(r)|\le \sum_{i\in I(r)}\Theta(\Sigma_{i,r},\infty)\le \Theta(\Sigma,\infty)<+\infty.
\end{align}
Letting $r\to +\infty$, we know $$e(\Sigma,\infty)=\lim_{r\to \infty}|I(r)|\le \Theta(\Sigma,\infty).$$  Moreover, if $e(\Sigma,\infty)> \Theta(\Sigma, \infty)-1$, then there exists $r_0>0$ such that $e(\Sigma,\infty)=|I(r_0)|>\Theta(\Sigma, \infty)-1$. So by (\ref{Ir}) and $\Theta(\Sigma_{i,r_0},\infty)\ge 1$, we know
$$\Theta(\Sigma_{i,r_0},\infty)<2, \forall i\in I(r_0).$$
By Remark \ref{end density} we know in fact $$\Theta(\Sigma_{i,r_0},\infty)=1.$$
Thus $e(\Sigma,\infty)=\Theta(\Sigma,\infty)$ and  by Proposition \ref{topolofical rigidity of minimal ends} again, there exists $r_1>r_0$ such that for every $r\ge r_1$,  each $\Sigma_{i,r_0}\backslash B_r(0)$ is an embedded annulus in $\mathbb{R}^n$.  Take $r$ large enough such that $K_{r_0}\subset B_r(0)$. Then $\Sigma\backslash B_r(0)=\sqcup_{i\in I(r_0)}\big(\Sigma_{i,r_0}\backslash B_r(0)\big)$ consists of $|I(r_0)|=e(\Sigma,\infty)$ many properly embedded annulus. By properness, $\Sigma\cap B_r(0)$ is compact, so $\Sigma$ is homeomorphic to a closed surface with  $e(\Sigma,\infty)$ points removed.
Now, by Ilmanen's local Gauss-Bonnet estimate \cite[Theorem 3]{I95}, we know for each $r<s<\infty$ and $\varepsilon>0$,
\begin{align*}
(1-\varepsilon)\int_{\Sigma\cap B_r(0)}|A|^2d\mu_g
\le \int_{\Sigma\cap B_s(0)}|H|^2d\mu_g+8\pi g(\Sigma\cap B_s(0))+\frac{24\pi D's^2}{\varepsilon(s-r)^2},
\end{align*}
where $g(\Sigma\cap B_s(0))$ is the genus of the closed surface obtained by capping off the boundary of  $\Sigma\cap B_s(0)$ by disks and $D'=\sup_{t\in [r,s]}\frac{\mathcal{H}^2(\Sigma\cap B_t(0))}{\pi t^2}$. Since we have shown $\Sigma$ has finite topology, by letting $s\to \infty$ and then $r\to \infty$ and taking $\varepsilon=\frac{1}{2}$, we get
\begin{align*}
\int_{\Sigma}|A|^2d\mu_g\le 2\int_{\Sigma}|H|^2d\mu_g+16\pi g(\Sigma)+96\pi \Theta(\Sigma,\infty)<+\infty.
\end{align*}
So, by Huber's classification\cite{H57} of complex structures for complete surfaces with finite total curvature, each end of $\Sigma$ is parabolic, i.e., $\Sigma$ is conformal to a closed Riemann surface with $e(\Sigma,\infty)$ points removed.
 \end{proof}

 \begin{rmk}The surfaces in Theorem \ref{finite topology} have finite topology and finite total curvature, but it is impossible to dominate their topology or total curvature by the Willmore energy and density of such surfaces. For example,  Hoffman and Meeks find\cite{HM90} there are a family of embedded minimal surfaces with three multiplicity one ends but  arbitrary many genuses. Their total curvature also tend to infinity as the genus goes to infinity.
 \begin{figure}[!htbp]
	\centering
	\begin{tabular}{c}
		\includegraphics[width=0.30\linewidth]{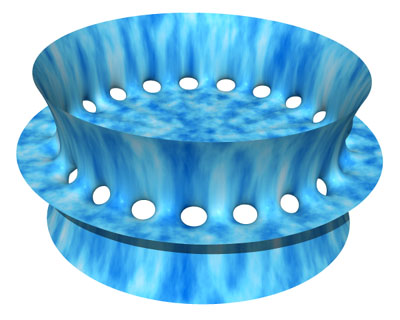}\\
		The Costa-Hoffman-Meeks surface with many handles
	\end{tabular}
\end{figure}
 \end{rmk}

\section{Applications}\label{realapplication}
\subsection{Isolated Singularities}
\

In this subsection, we will care about the isolated singularity and do inverse process of section \ref{application}.
\begin{prop}\label{isodensityidentity}
Assume $\Sigma\subset B_1(0)\backslash\{0\}\subset\mathbb{R}^{2+k}$ is a properly immersed surface with $\partial{\Sigma}\subset \partial B_1(0)$,
\begin{align}\label{isofinitewill}
\int_{\Sigma\backslash \{0\}}|H|^2d\mathcal{H}^2<+\infty
\end{align}
and
\begin{align*}
\Theta_*(\Sigma,0)=\liminf_{r\to 0}\frac{\mathcal{H}^2(\Sigma\cap B_r(0))}{\pi r^2}<+\infty.
\end{align*}
Then the inverted surface  $\tilde{\Sigma}$ is properly immersed in $\mathbb{R}^{n+k}$ with finite density $\Theta(\tilde{\Sigma},\infty)\ge 1$ at infinity and
\begin{align}\label{isofinitewillmore}
\int_{\tilde{\Sigma}}|\tilde{H}|^2d\mathcal{H}^2<+\infty.
\end{align}
Moreover, there holds the density identity
\begin{align}\label{isofinitedensity}
\Theta(\Sigma,0)=\Theta(\tilde{\Sigma},\infty),
\end{align}
which means both sides are well-defined and they are equal.
\end{prop}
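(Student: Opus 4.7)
The plan is to apply Lemma \ref{Density Formula} (equivalently Lemma \ref{density identity}) in the ``reverse direction'' to $\tilde\Sigma$: since inversion through $0$ is an involution fixing $\partial B_1$, running the argument of Lemma \ref{density identity} with $\tilde\Sigma$ in place of the ``original'' surface and $\Sigma$ in place of the inverted one will yield exactly $\Theta(\Sigma,0)=\Theta(\tilde\Sigma,\infty)$. The only nontrivial work is checking that $\tilde\Sigma$ satisfies the hypotheses of Lemma \ref{Density Formula}, namely: proper immersion in $\mathbb{R}^{2+k}$, finite Willmore energy $\int_{\tilde\Sigma}|\tilde H|^2<\infty$, and finite lower density at infinity $\Theta_*(\tilde\Sigma,\infty)<\infty$.

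Properness is immediate: $h$ is a diffeomorphism $B_1(0)\setminus\{0\}\to\mathbb{R}^{2+k}\setminus\overline{B_1(0)}$ fixing $\partial B_1$, so properness of $f$ transfers to $h\circ f$. For the other two bounds, the key preparatory step is an analogue of Corollary \ref{integral discription of upper density at infinity} at the base point $0$: the assumption $\Theta_*(\Sigma,0)<\infty$ together with \eqref{monotonicity inequality} gives the uniform area bound $\mu_g(\Sigma\cap B_r(0))\le C r^2$ for all $r\in(0,1]$, and sending $\sigma\to 0$ (instead of $\rho\to\infty$ as in the cited corollary) in \eqref{monotonicity equality} produces $\int_\Sigma|\nabla^\bot r/r|^2 d\mu_g<\infty$. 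Combined with \eqref{observation} and $d\mu_{\tilde g}=|f|^{-4}d\mu_g$, this yields the pointwise bound
\begin{align*}
|\tilde H|^2 d\mu_{\tilde g}\le C\Big(|H|^2+\Big|\frac{\nabla^\bot r}{r}\Big|^2\Big) d\mu_g,
\end{align*}
hence $\int_{\tilde\Sigma}|\tilde H|^2<\infty$. Finite density at infinity then follows by running the dyadic decomposition of \eqref{localuperdensity} in reverse: the bound $\mu_g(\Sigma\cap B_{2^{-k}})\le C\pi 4^{-k}$ yields $\mu_{\tilde g}(\tilde\Sigma\cap B_R(0))\le C\pi R^2$ for $R>1$.

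With the hypotheses verified (after extending $\Sigma$ trivially across $0$ and smoothly across $\partial B_1$ to a rectifiable varifold in $\mathbb{R}^{2+k}$ with generalized mean curvature still in $L^2$, invoking the Proposition \ref{weak removable} style cut-off argument), Lemma \ref{Density Formula} applied to $\tilde\Sigma$ at base point $0\notin\tilde\Sigma$ shows that $\Theta(\tilde\Sigma,\infty)$ is well defined and
\begin{align*}
\pi\Theta(\tilde\Sigma,\infty)=\int_{\Sigma\setminus\{0\}}\Big(\frac{|H|^2}{16}-\Big|\frac{H}{4}+\frac{\nabla^\bot r}{r}\Big|^2\Big) d\mu_g.
\end{align*}
On the other hand, applying the monotonicity identity \eqref{monotonicity equality} to the extended $\Sigma$ at base point $0$ and passing to the limits $\sigma\to 0$, $\rho\to\infty$, the two boundary remainders $\frac{1}{2\sigma^2}\int_{B_\sigma}r\langle\nabla^\bot r,H\rangle$ and $\frac{1}{2\rho^2}\int_{B_\rho}r\langle\nabla^\bot r,H\rangle$ both vanish (by Cauchy--Schwarz, as in \eqref{rhovanishing} and \eqref{sigmavanishing}), $\mu(B_\rho)/\rho^2\to 0$ because $\Sigma$ is bounded, and $\mu(B_\sigma)/\sigma^2\to\pi\Theta(\Sigma,0)$; the same integral therefore equals $\pi\Theta(\Sigma,0)$. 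Comparing the two expressions gives the density identity. The bound $\Theta(\Sigma,0)\ge 1$ then follows from the upper semi-continuity of the density (Corollary \ref{density at each point}) together with $\Theta(\Sigma,y)\ge 1$ at every smooth point $y\in\Sigma$ approaching $0$.

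The main technical obstacle is the rigorous extension of $\Sigma$ to a rectifiable 2-varifold in all of $\mathbb{R}^{2+k}$, across both the singularity at $0$ and the boundary $\partial B_1$, with $H$ still in $L^2$; this is precisely what legitimises applying the monotonicity formula at the base point $0$. Once this is carried out via the same cut-off argument used in Proposition \ref{weak removable}, the remaining bookkeeping is essentially a mirror image of the proof of Lemma \ref{density identity}.
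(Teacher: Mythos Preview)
Your proposal is correct and follows essentially the same route as the paper. Both arguments close up $\partial\Sigma$, extend across $0$ as a varifold via the cut-off argument, derive $\int_\Sigma|\nabla^\bot r/r|^2<\infty$ and hence $\int_{\tilde\Sigma}|\tilde H|^2<\infty$ from \eqref{observation}, and then use the monotonicity identity on both $\Sigma$ (at $0$) and $\tilde\Sigma$ (at $\infty$) together with the antisymmetric formula \eqref{localinvariant} to match the resulting integrals; the only cosmetic difference is that the paper rederives the $\tilde\Sigma$-side monotonicity limit directly (using \eqref{bracketterm} to kill the $\rho\to\infty$ boundary term) rather than citing Lemma~\ref{Density Formula} as a black box.
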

\begin{proof}
Since $\partial \Sigma\subset \partial B_1(0)$ is compact, we can close it up and assume $\Sigma\subset B_2(0)$ is a surface without boundary.  By (\ref{isofinitewillmore}) and (\ref{isofinitedensity}) and the same argument as in Proposition \ref{weak removable}, we know
$\bar{\Sigma}=\Sigma\cup \{0\}$ is an integral varifold in $B_2(0)$ with generalized mean curvature $H\in L^2$. So, the monotonicity formula (\ref{monotonicity equality}) holds and $\Theta(\Sigma,0)\ge 1$ is well defined . Noting $\bar{\Sigma}$ has finite volume, by Corollary \ref{integral discription of upper density at infinity}, we know
\begin{align*}
\int_{\Sigma}\big|\frac{\nabla^{\bot}r}{r}\big|^2d\mathcal{H}^2<+\infty.
\end{align*}
Hence  by letting $\sigma\to 0$ and $\rho\to \infty$ in (\ref{monotonicity equality}), we get
\begin{align}\label{isodensityformula}
\pi\Theta(\Sigma,0)=\int_{\Sigma}\bigg(\frac{|H|^2}{16}-\big|\frac{H}{4}+\frac{\nabla^{\bot}r}{r}\big|^2\bigg)d\mathcal{H}^2.
\end{align}
Also use $f:\Sigma\to B_2(0)\subset \mathbb{R}^{2+k}$ to denote the immersion map and let $h=\frac{f}{|f|^2}$ be the inversion. Again by the observation (\ref{observation}). We know for any $R>0$,
\begin{align*}
\int_{\tilde{\Sigma}\cap B_R(0)}|\tilde{H}|^2d\mathcal{H}^2\le 320\int_{\Sigma\backslash B_{\frac{1}{R}(0)}}\bigg(|H|^2+\big|\frac{\nabla^{\bot}r}{r}\big|^2\bigg)d\mathcal{H}^2.
\end{align*}
Letting $R\to \infty$, we get $\int_{\tilde{\Sigma}}|\tilde{H}|^2d\mathcal{H}^2<+\infty$ and the monotonicity formula (\ref{monoton}) holds for $\tilde{\Sigma}$.

Now, on the one hand, since $\tilde{\Sigma}\subset \mathbb{R}^{2+k}\backslash B_{\frac{1}{2}}(0)$, we know for $\sigma<\frac{1}{2}$,
\begin{align*}
\frac{\mathcal{H}^2(\tilde{\Sigma}\cap B_{\sigma}(0))}{\sigma^2}=\frac{1}{2\sigma^2}\int_{B_{\sigma}(0)}\langle \tilde{r}\tilde{\nabla}^{\bot}\tilde{r}, \tilde{H}\rangle d\mathcal{H}^2=0.
\end{align*}
On the other hand, by (\ref{bracketterm}), we know
\begin{align*}
|\langle \tilde{\nabla}^{\bot}\tilde{r}, \tilde{H}\rangle|
\le 4|f^{\bot}|^2+|f|^2|\langle H,f^{\bot}\rangle|
\le 5|f^{\bot}|^2+ |H|^2|f|^4.
\end{align*}
So,
\begin{align*}
\big|\frac{1}{2\rho^2}\int_{\tilde{\Sigma}\cap B_{\rho}(0)}\langle \tilde{r}\tilde{\nabla}^{\bot}\tilde{r}, \tilde{H}\rangle d\mu_{\tilde{g}}=\big|
&\le \frac{5}{2\rho^2}\int_{\Sigma\backslash B_{\frac{1}{\rho}}(0)}\frac{1}{|f|}(|f^{\bot}|^2+|f|^4|H|^2)\frac{1}{|f|^4}d\mu_g\\
&\le \frac{5}{2\rho} \int_{\Sigma}\bigg(|H|^2+\big|\frac{\nabla^{\bot}r}{r}\big|^2\bigg)d\mu_g.
\end{align*}
Letting $\sigma\to 0$  and $\rho\to \infty$ in (\ref{monoton}), we get
\begin{align*}
\Theta(\tilde{\Sigma},\infty)
&=-\lim_{\rho\to\infty,\sigma\to 0}\frac{1}{\pi}\int_{\tilde{\Sigma}\cap (B_\rho\backslash B_\sigma)}
\bigg(\frac{|\tilde{H}|^2}{16}-\big|\frac{\tilde{H}}{4}
+\frac{\tilde{\nabla}^{\bot}\tilde{r}}{\tilde{r}}\big|^2 \bigg)d\mu_{\tilde{g}}\\
&=\lim_{\rho\to\infty,\sigma\to 0}\frac{1}{\pi}\int_{\Sigma\cap (B_{\frac{1}{\sigma}}\backslash B_{\frac{1}{\rho}})}
\bigg(\frac{|H|^2}{16}-\big|\frac{H}{4}
+\frac{\nabla^{\bot}r}{r}\big|^2 \bigg)d\mu_{g}\\
&=\Theta(\Sigma,0),
\end{align*}
where we use the local antisymmetric transformation formula (\ref{localinvariant}) and (\ref{isodensityformula}).
\end{proof}
Similar to the conception of the number of ends at infinity,  for a surface $\Sigma$ properly immersed in $B_1(0)\backslash\{0\}$, we define the number of local connected components of $\Sigma$ near $0$ by
\begin{align*}
e(\Sigma, 0)=\lim_{r\to 0}\tilde{\beta}_0(\Sigma\cap B_r(0)\backslash\{0\})
\end{align*}
where by $\tilde{\beta}_0$ we mean the number of noncompact connected components of a topology space.
\begin{cor}\label{removability}
Assume $\Sigma\subset B_1(0)\backslash\{0\}\subset\mathbb{R}^{2+k}$ is a properly immersed surface with $\partial{\Sigma}\subset \partial B_1(0)$ and satisfying (\ref{isofinitewill}) and
\begin{align*}
e(\Sigma,0)>\Theta_*(\Sigma,0)-1<\infty.
\end{align*}
Then $\Sigma$ has finite topology and finite total curvature. Moreover, we also know $\Theta(\Sigma,0)=e(\Sigma,0)$ is an integer and for small $r>0$, and $1\le i\le e(\Sigma,0)$, each component $\big((\Sigma_i\cup \{0\})\cap B_r(0),g)$  is bi-Lipschitz  homeomorphic to a $2$-dimensional disk.
\end{cor}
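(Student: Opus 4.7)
The plan is to reduce the singularity problem at $0$ to the asymptotic problem at infinity already handled by Theorem \ref{finite topology}, via the inversion $h(x)=x/|x|^2$. Set $\tilde\Sigma=h(\Sigma)$. Proposition \ref{isodensityidentity} guarantees that $\tilde\Sigma$ is properly immersed in $\mathbb{R}^{2+k}$ with $\int_{\tilde\Sigma}|\tilde H|^2\,d\mathcal H^2<\infty$ and $\Theta(\tilde\Sigma,\infty)=\Theta(\Sigma,0)$; in particular the monotonicity identity underlying that proposition forces $\Theta_*(\Sigma,0)=\Theta(\Sigma,0)$ to be a genuine limit.

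Because $h$ is a diffeomorphism from $B_1(0)\setminus\{0\}$ onto $\mathbb{R}^{2+k}\setminus\overline{B_1(0)}$ sending each $B_r(0)\setminus\{0\}$ onto the exterior of $B_{1/r}(0)$, the noncompact components of $\Sigma\cap(B_r(0)\setminus\{0\})$ are in bijection with the noncompact components of $\tilde\Sigma$ outside $B_{1/r}(0)$, so $e(\tilde\Sigma,\infty)=e(\Sigma,0)$. The hypothesis therefore becomes $e(\tilde\Sigma,\infty)>\Theta(\tilde\Sigma,\infty)-1<\infty$, and Theorem \ref{finite topology} applied to $\tilde\Sigma$ yields that $\tilde\Sigma$ has finite topology and finite total curvature, that $\Theta(\tilde\Sigma,\infty)=e(\tilde\Sigma,\infty)=:e$ is an integer, and that $\tilde\Sigma$ has exactly $e$ ends each of density one. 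Pulling these back by $h$ gives the finite topology of $\Sigma$ at $0$ and the integer identity $\Theta(\Sigma,0)=e(\Sigma,0)$. For the finite total curvature of $\Sigma$, I invoke the pointwise conformal invariance of $|A-\tfrac12 Hg|^2\,d\mu_g$ in dimension two together with $|A|^2=|A-\tfrac12 Hg|^2+\tfrac12|H|^2$: since $\int_{\tilde\Sigma}|\tilde A-\tfrac12\tilde H\tilde g|^2<\infty$ transfers to $\int_\Sigma|A-\tfrac12 Hg|^2<\infty$, and $\int_\Sigma|H|^2<\infty$ by assumption, we obtain $\int_\Sigma|A|^2<\infty$.

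The final and most delicate point is the bi-Lipschitz parametrization of each $(\Sigma_i\cup\{0\})\cap B_r(0)$ by a flat $2$-disk. For each end $\tilde\Sigma_i$ of $\tilde\Sigma$, the density one at infinity together with $\int_{\tilde\Sigma_i\setminus B_R(0)}|\tilde H|^2\to 0$ as $R\to\infty$ places us, at every sufficiently large scale, strictly in the subcritical regime of the classical Allard regularity theorem, which supplies a unique asymptotic plane $T$ and a $C^{1,\alpha}$ map $\tilde f:T\setminus B_R^T(0)\to T^\perp$ whose graph coincides with $\tilde\Sigma_i$ outside a compact set, with $|\tilde f(x)|/|x|\to 0$ and $|D\tilde f(x)|\to 0$ as $|x|\to\infty$. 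Reparametrizing by $y=x/|x|^2\in T$ and applying the inversion yields
\[
\Phi(y)=\frac{(y,\ |y|^2\,\tilde f(y/|y|^2))}{1+|y|^2\,|\tilde f(y/|y|^2)|^2},\qquad \Phi(0)=0,
\]
from $\overline{B^T_{1/R}(0)}$ onto $(\Sigma_i\cup\{0\})\cap\overline{B_r(0)}$. The decay of $\tilde f$ and $D\tilde f$, refined by the finite-total-curvature asymptotics of the end, gives $\Phi(y)=y+o(|y|)$ at the origin and shows that $d\Phi$ extends continuously to the inclusion $T\hookrightarrow\mathbb{R}^{2+k}$ at $y=0$, so $\Phi$ is bi-Lipschitz near the origin. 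The main obstacle is precisely this upgrade from the Hölder parametrization furnished by Theorem \ref{Holder Regularity} (which is sharp at the singular point itself) to bi-Lipschitz: Theorem \ref{Holder Regularity} does not distinguish the subcritical case from the critical one, but density strictly less than two together with finite Willmore energy in fact restores subcriticality at every fixed positive distance from the singularity, and it is classical Allard coupled with the asymptotic theory of finite-total-curvature ends that then supplies the regularity to invert cleanly at the origin.
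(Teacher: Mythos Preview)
Your reduction to Theorem \ref{finite topology} via inversion, the identification $e(\tilde\Sigma,\infty)=e(\Sigma,0)$, and your derivation of finite total curvature from the conformal invariance of $|A-\tfrac12 Hg|^2\,d\mu_g$ match the paper's proof exactly. The divergence is in the bi-Lipschitz step.

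The paper does not attempt a graphical argument on $\tilde\Sigma$. Having established that $\Sigma\cap B_r(0)$ is conformal to a punctured disk with finite area and finite total curvature, it invokes Kuwert--Li \cite[Theorem~3.1]{KL12}: the conformal immersion extends to $W^{2,2}(D)$ and the conformal factor has the form $u(z)=m\log|z|+w(z)$ with $w\in C^0\cap W^{1,2}(D)$ and $m=\Theta(\Sigma,0)-1$. Since $\Theta(\Sigma,0)=1$, the logarithmic term vanishes, $u\in C^0(D)$, and the conformal parametrization itself is bi-Lipschitz.

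Your route has a genuine gap at the point where you invoke classical Allard. Allard's $C^{1,\alpha}$ conclusion requires $H\in L^p$ with $p>2$; a small $L^2$ norm of $\tilde H$ does \emph{not} place you in any ``subcritical regime''---this is precisely the critical case that Theorem~\ref{Holder Regularity} treats, and it delivers only $C^\alpha$. What you actually need is that the end $\tilde\Sigma_i$, now known to have finite total curvature, is a graph over a plane with $|D\tilde f|\to 0$ at infinity. That is true, but it comes from the extension of the Gauss map across the puncture (an input of Sacks--Uhlenbeck/M\"uller--\v Sver\'ak/Kuwert--Li type), not from Allard. Once that decay is justified, your inversion formula for $\Phi$ and the computation $D\Phi(0)=\iota_T$ are correct and do yield bi-Lipschitz. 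So your strategy can be completed, but the appeal to Allard must be replaced by the appropriate finite-total-curvature end asymptotics---which is essentially the same analytic content the paper imports from \cite{KL12}, applied on the other side of the inversion.
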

\begin{proof}Without loss of generality, we assume $e(\Sigma,0)=1$ and $\Theta_*(\Sigma,0)<2$. By Proposition \ref{isodensityidentity}, the inverted surface $\tilde{\Sigma}$ is properly immersed in $\mathbb{R}^{2+k}$ with finite Willmore energy and
$$1\le \Theta(\tilde{\Sigma},\infty)=\Theta(\Sigma,0)<2.$$
So, by Theorem \ref{finite topology}, $\tilde{\Sigma}$ has finite topology and finite total curvature, is conformal to a punctured disk when restricted to the outside of a large ball and has density $\Theta(\tilde{\Sigma},\infty)=1.$
So, $\Sigma\cap B_r(0)$ is conformal to a punctured disk for small $r$, i.e., there is a conformal parametrization $\varphi: D_1(0)\backslash\{0\}\to \Sigma\cap B_r(0)$.   Noting the trace free part of the second fundamental form is conformal invariant,  We know
\begin{align*}
\int_{\Sigma}|A|^2d\mu_g
&\le 2\int_{\Sigma}|A-\frac{H}{2}g|^2d\mu_g+\int_{\Sigma}|H|^2d\mu_g\\
&=2\int_{\tilde{\Sigma}}|\tilde{A}-\frac{\tilde{H}}{2}\tilde{g}|^2+\int_{\Sigma}|H|^2d\mu_g\\
&\le 4\int_{\tilde{\Sigma}}|\tilde{A}|^2d\mu_{\tilde{g}}
+2\int_{\tilde{\Sigma}}|\tilde{H}|^2d\mu_{\tilde{g}}+\int_{\Sigma}|H|^2d\mu_g<+\infty.
\end{align*}
By Kuwert and Li's  classification theorem\cite[Theorem 3.1]{KL12} for isolated singularities  of surfaces with finite area and finite total curvature, we know $\varphi\in W^{2,2}(D,\mathbb{R}^{2+k})$ and the induced conformal metric $g=e^{2u}(dx^2+dy^2)$ satisfying
$$u(z)=m\log{z}+w(z)$$
for $w(z)\in C^0\cap W^{1,2}(D)$ and $m=\Theta(\Sigma,0)-1$. Now, since $\Theta(\Sigma,0)=\Theta(\tilde{\Sigma},\infty)=1$, we know $m=0$.  Hence $u=w\in C^0(D)$, which means
\begin{align*}
\frac{1}{C}|x-y|\le d_g(f(x),f(y))=\inf_{\gamma\text{ joining } x, y} \int_{0}^{1}e^{u(\gamma)}|\dot{\gamma}|dt\le C|x-y|,
\end{align*}
i.e., $f: D\to (\bar{\Sigma}\cap B_r(0),g)$ is a bi-Lipschitz parametrization for $r$ small.
\end{proof}
\begin{rmk}
The same conclusion holds for surfaces properly immersed in a punctured geodesic ball $ B_1(p)\backslash\{p\}$ of a Riemannian manifold $(M^{2+k},g)$, since $(M,g)$ can be embedded in $R^{2+k+N}$ by Nash embedding theorem and the density, topology and finiteness of Willmore energy of $\Sigma$ will not change.
\end{rmk}
\subsection{Uniqueness of The Catenoid and Minimal Ends}\label{minimalend}
\

As a corollary, we prove a uniqueness result for the catenoid.
 \begin{cor}\label{catenoid}
   Assume $\Sigma\subset \mathbb{R}^3$ is a connected properly immersed minimal surface with at least two ends. If
   $$\Theta(\Sigma,\infty)<3,$$
   then $\Sigma$ is the catenoid.
   \end{cor}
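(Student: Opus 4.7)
The plan is to reduce the corollary to Schoen's uniqueness theorem for the catenoid (quoted in the introduction) by verifying its hypotheses via the finite topology theorem proved in Section \ref{application}.

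First, since $\Sigma$ is minimal, $H\equiv 0$, so trivially $\int_{\Sigma}|H|^{2}d\mathcal{H}^{2}=0<\infty$. The hypothesis that $\Sigma$ has at least two ends gives $e(\Sigma,\infty)\geq 2$, while the density bound $\Theta(\Sigma,\infty)<3$ gives $\Theta(\Sigma,\infty)-1<2\leq e(\Sigma,\infty)$. Hence the inequality
\[
e(\Sigma,\infty)>\Theta(\Sigma,\infty)-1<+\infty
\]
of Theorem \ref{finite topology} is satisfied.

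Applying Theorem \ref{finite topology}, I would then extract the following consequences: (i) $\Sigma$ has finite topology and finite total curvature $\int_{\Sigma}|A|^{2}d\mu_{g}<+\infty$; (ii) $\Theta(\Sigma,\infty)=e(\Sigma,\infty)$ is a positive integer; (iii) $\Sigma$ has exactly $e=e(\Sigma,\infty)$ ends, each of density one at infinity. Combining $e\geq 2$ with $e=\Theta(\Sigma,\infty)<3$ forces $e=2$, so $\Sigma$ has precisely two ends, each of density one.

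Next, invoking Proposition \ref{topolofical rigidity of minimal ends} for each of these two density-one ends, each end is, outside a sufficiently large ball, embedded and homeomorphic to $S^{1}\times\mathbb{R}$. Thus $\Sigma$ is a connected complete immersed minimal surface in $\mathbb{R}^{3}$ with finite total curvature and two embedded ends. Schoen's classification theorem \cite{S83}, proved by the moving plane method and recalled in the introduction, asserts that the only such surface is the catenoid; this finishes the proof. The only substantive step is the verification of the hypotheses of Theorem \ref{finite topology}; once that theorem and Proposition \ref{topolofical rigidity of minimal ends} are in hand, Schoen's theorem closes the argument immediately, and there is no genuine new obstacle to overcome.
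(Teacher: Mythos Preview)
Your proof is correct and follows essentially the same route as the paper: verify $e(\Sigma,\infty)\ge 2>\Theta(\Sigma,\infty)-1$, apply Theorem~\ref{finite topology} to obtain finite total curvature and exactly two embedded ends, then invoke Schoen's theorem. Your explicit appeal to Proposition~\ref{topolofical rigidity of minimal ends} for the embeddedness of the ends is a harmless unpacking of what is already established inside the proof of Theorem~\ref{finite topology}.
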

\begin{proof}
Since $e(\Sigma,\infty)\ge 2>\Theta(\Sigma, \infty)-1$, by Theorem \ref{finite topology}, we know $\Sigma$ has finite total curvature and exactly two embedded ends. So, by Schoen's uniqueness theorem\cite{S83}, $\Sigma$ is a catenoid.
\end{proof}

As mentioned in the introduction, this uniqueness of the catenoid is also a direct corollary of Leon Simon's theorem on the uniqueness of the tangent cone\cite{LS83b}\cite[The paragraph after Theorem 5.7]{LS85}. The following is a most simple example of such uniqueness phenomenon.

 \begin{cor}\label{global} Assume $\Sigma$ is a complete immersed minimal surface in $\mathbb{R}^{2+k}$ with $$\Theta(\Sigma,+\infty)<e+1\ \ \ \text{ and }\ \ \ \  e(\Sigma,\infty)\ge e.$$ Then $\Sigma$ has exactly $e$ ends and each end $\Sigma_i$ can be written as a graph over some plane $V_i$ in with gradient tends to be zero. Moreover, in the case $k=1$, these $T_i$ are the same.
\end{cor}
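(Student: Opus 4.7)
The hypothesis $\Theta(\Sigma,\infty)<e+1$ together with $e(\Sigma,\infty)\ge e$ gives $e(\Sigma,\infty)>\Theta(\Sigma,\infty)-1$, so Theorem~\ref{finite topology} applies. It hands us at once finite topology, finite total curvature, the fact that $\Theta(\Sigma,\infty)=e(\Sigma,\infty)$ is an integer, and that each of the $e(\Sigma,\infty)$ ends has density one at infinity. Since $e\le e(\Sigma,\infty)\le\Theta(\Sigma,\infty)<e+1$ with $e(\Sigma,\infty)$ an integer, we conclude $e(\Sigma,\infty)=e$ exactly.

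Next I would analyse each end $\Sigma_i$ separately. By Proposition~\ref{topolofical rigidity of minimal ends}, for $r$ large enough $\Sigma_i\setminus B_r(0)$ is a properly embedded annulus. Since $\Sigma_i$ is minimal with finite total curvature and has density one at infinity, varifold compactness (applied as in Remark~\ref{end density}) shows that every blow-down sequence of $\Sigma_i$ converges to a multiplicity-one stationary integral plane $V_i$. Leon Simon's uniqueness of tangent cone with smooth cross section (the result invoked in the introduction, from \cite{LS83b}, \cite{LS85}) promotes this to: the tangent plane $V_i$ at infinity is independent of the blow-down sequence. Once the unique tangent cone is a multiplicity-one plane, Allard's theorem together with standard elliptic regularity for the minimal surface equation lets us upgrade this to the desired conclusion---outside a sufficiently large ball, $\Sigma_i$ is a smooth graph over $V_i$ with gradient tending to zero.

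For the codimension-one claim ($k=1$, so $\Sigma\subset\mathbb{R}^3$), each end is now a minimal graph over its tangent plane $V_i\subset\mathbb{R}^3$ with gradient decaying to zero, hence is either a planar end or a catenoid end in the classical sense, with asymptotic expansion of the form $a_i\log r+b_i+O(r^{-1})$ over $V_i$. Combined with the eventual embeddedness of each $\Sigma_i$ furnished by Proposition~\ref{topolofical rigidity of minimal ends}, this can be played against the Jorge--Meeks parallel-ends theorem (or, equivalently, a direct maximum-principle/flux-balance argument for connected minimal ends in $\mathbb{R}^3$): if some $V_i\neq V_j$ the two asymptotic graphs would produce an unavoidable transversal intersection near infinity which is incompatible with both ends being embedded annuli lying on the same connected minimal surface, forcing $V_i=V_j$ for all $i,j$.

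I expect the last step to be the main obstacle. The first two paragraphs are essentially bookkeeping on top of Theorem~\ref{finite topology} and the well-established tangent-cone uniqueness for minimal surfaces, so they are almost ``plug and play''. The $k=1$ rigidity, by contrast, is not a formal consequence of Theorem~\ref{finite topology} and relies on codimension-one-specific phenomena---the planar/catenoid dichotomy for finite-total-curvature ends together with a parallel-planes argument---none of which invokes the new critical Allard--Reifenberg regularity of this paper.
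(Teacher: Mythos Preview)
Your overall strategy is sound, but there is one genuine gap and one methodological difference from the paper worth noting.

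\textbf{The gap.} You invoke Theorem~\ref{finite topology} immediately, but that theorem requires $\Sigma$ to be \emph{properly} immersed, whereas the corollary only assumes completeness. The paper handles this first: completeness together with the quadratic area growth implied by $\Theta(\Sigma,\infty)<\infty$ forces the immersion to be proper, by \cite[Lemma~3]{Ch97}. Without this step your appeal to Theorem~\ref{finite topology} is unjustified.

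\textbf{The different route.} For the graph representation of each end, you go through varifold compactness, Leon Simon's uniqueness of tangent cones with smooth cross section, and then Allard regularity. This works, and indeed this path is alluded to in the introduction. The paper instead takes a more elementary and minimal-surface-specific route: once Theorem~\ref{finite topology} yields that each end $\Sigma_i$ is conformally a punctured disk with finite total curvature, the Gauss map $G:\Sigma_i\to G_{2,n}(\mathbb{R})$ is harmonic with finite energy (its energy being exactly the total curvature), so Sacks--Uhlenbeck removability \cite[Theorem~3.6]{SU81} extends $G$ continuously across the puncture. A continuous limit of the Gauss map at infinity gives the graph-with-vanishing-gradient conclusion directly. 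Your approach buys generality (it does not use minimality in an essential way beyond what Theorem~\ref{finite topology} already consumed), while the paper's approach avoids the heavy Simon machinery and stays within classical minimal surface theory.

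For the $k=1$ parallel-ends statement, the paper simply writes ``the rest is well known,'' so your citation of the Jorge--Meeks result is in the same spirit; your informal transversal-intersection heuristic is a bit loose for a merely immersed surface, but since both you and the paper defer to the classical literature here, this is not a substantive divergence.
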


\begin{proof}
Since $\Sigma$ is complete and of quadratic area growth, by \cite[Lemma 3]{Ch97}, the immersion $f$ is proper. By Theorem \ref{finite topology}, there exist $r_1>0$ such that
$$\Sigma\backslash  B_{r_1}(0)=\sqcup_{i=1}^{e}\Sigma_i,$$
where $e=e(\Sigma,\infty)$ and each $\Sigma_i$ is conformal to a punctured disk with finite total curvature and $\Theta(\Sigma_i,\infty)=1$.  Moreover, since $\Sigma_i$ is  minimal, its Gaussian map $G(x)=e_1(x)\wedge e_2(x):\Sigma_i \to (G_{2,n}(\mathbb{R}),g_c)$ is a harmonic map on the punctured disk with finite energy(note the energy of the Gaussian map is exactly the total curvature). So by Sacks and Uhlenbeck's \cite[Theorem 3.6]{SU81} removability of singularity for harmonic maps with finite energy(or \cite[Theorem A]{H91a}), $G(x)$ can be extended continuously across infinity. The rest is well known. 

\end{proof}

\end{document}